\providecommand{\U}[1]{\protect \rule{.1in}{.1in}}
\theoremstyle{change}
\newtheorem{definition}{Definition:}[section]
\newtheorem{proposition}[definition]{Proposition:}
\newtheorem{theorem}[definition]{Theorem:}
\newtheorem{lemma}[definition]{Lemma:}
\newtheorem{remark}[definition]{Remark:}
\newenvironment{proof}
{{\bf Proof:}}
{\qquad \hspace*{\fill} $\Box$}
\begin{document}

\title{On the characterization of the controllability property for linear control
systems on nonnilpotent, solvable three-dimensional Lie groups}
\author{V\'{\i}ctor Ayala\\Instituto de Alta Investigaci\'{o}n\\Universidad de Tarapac\'{a}\\Sede Iquique, Iquique, Chile
\and Adriano Da Silva\\Instituto de Matem\'{a}tica\\Universidade Estadual de Campinas\\Cx. Postal 6065, 13.081-970 Campinas-SP, Brasil.}
\date{\today }
\maketitle

\begin{abstract}
In this paper we show that a complete characterization of the controllability
property for linear control system on three-dimensional solvable nonnilpotent
Lie groups is possible by the LARC and the knowledge of the eigenvalues of the
derivation associated with the drift of the system.

\end{abstract}

{\small \textbf{Keywords:} Solvable Lie groups, linear control systems,
derivation}

{\small \textbf{Mathematics Subject Classification (2010):} 93B05, 93C05,
22E25}

\section{Introduction}

Linear control systems on Euclidean spaces appear in several physical
applications (see for instance \cite{Leitmann, P-B-G-M, Shell}). A natural
extension of a linear control system on Lie groups appears first in
\cite{Markus} for matrix groups and then in \cite{AyTi} for any Lie group. In
the subsequent years, several works addressing the main problems in control
theory for such systems, such as controllability, observability and
optimization appeared (see \cite{DSAy, DSAyGZ, San, DS, DaJo, Jouan1,
Jouan2}). In \cite{Jouan2} P. Jouan shows that such generalization is also
important for the classification of general affine control systems on abstract
connected manifolds. He shows that any affine control system on a connected
manifold that generates a finite dimensional Lie algebra is diffeomorphic to a
linear control system on a Lie group or on a homogeneous space.

Concerning controllability of linear control system, in \cite{DSAy} and
\cite{DS} a more geometric approach was proposed by considering the
eigenvalues of a derivation associated with the drift of the system. In
particular, it was shown that a linear system is controllable if its reachable
set from the identity is open and the associated derivation has only
eigenvalues with zero real part. For restricted linear control systems on
nilpotent Lie groups such condition is also necessary for controllability. In
the same direction, Dath and Jouan show in \cite{DaJo} that linear control
systems (restricted or not) on a two-dimensional solvable Lie group present
the same behavior, they are controllable if and only if they satisfy the Lie
algebra rank condition and the associated derivation has only zero
eigenvalues. Here, the LARC is equivalent to the ad-rank condition which
implies, in particular, the openness of the reachable set (see \cite{AyTi}).

In the present paper, we show that the behavior of nonrestricted linear
control systems on three-dimensional solvable nonnilpotent Lie groups differ
significantly from the two-dimensional case. By using a beautiful
classification of three-dimensional solvable Lie groups (see Chapter 7 of
\cite{Oni}) we show that the geometry of the group strongly interferes in the
controllability of the system and, although such systems do not behave the
same as in the two-dimensional case, a complete characterization of their
controllability is possible only by the knowledge of the eigenvalues of the
associated derivation and the Lie algebra rank condition.

The paper is structured as follows: Section 2 is used to introduce the main
properties and results concerning linear vector fields, linear control systems
and decompositions of Lie algebras and Lie group induced by derivations.
Section 3 is devoted to the study of nonnilpotent, solvable three-dimensional
Lie groups. By using the classification in \cite{Oni}, we algebraically
characterize the main elements needed in the proofs concerning controllability
such as derivations, linear and invariant vector fields and so on. At the end
of the section, we present some particular homogeneous spaces which will be of
great importance when considering projections of linear control systems. In
Section 4 we completely characterize the controllability of linear control
systems on such groups. The work is divided into two cases, depending if the
dimension of the Lie subalgebra generated by the control vectors is one or
two, and then analyzed group by group using the classification presented in
Section 3.

\section{Preliminaries}

\subsection{Notations}

In the whole paper, the Lie groups and subgroups considered are assumed to be
connected unless we say the contrary. Their Lie algebras are identified with
the set of left-invariant vector fields. If $M,N$ are smooth manifolds and
$f:M\rightarrow N$ is a differentiable map, we denote by $(df)_{x}$ the
differential of $f$ at the point $x\in M$ and by $f_{\ast}$ the differential
of $f$ at any given point.

For any element $g\in G$ we denote by $L_{g}$ and $R_{g}$ the left and right
translations of $G$ and $e\in G$ stands for the identity element of $G$. If a
Lie algebra is given by the semi-direct product $\mathfrak{g}=\mathfrak{h}%
\times_{\theta}\mathfrak{k}$ we will use the identification $\mathfrak{k}%
=\{0\} \times \mathfrak{k}$ and $\mathfrak{h}=\mathfrak{h}\times \{0\}$ and the
same holds for Lie groups that are given as semi-direct product.

\subsection{Linear vector fields and decompositions}

In this section, we define linear vector fields and state their main
properties. For the proof of the assertions in this section the reader can
consult \cite{AyTi}, \cite{Jouan1} and \cite{Jouan2}.

Let $G$ be a connected Lie group with Lie algebra $\mathfrak{g}$. A vector
field $\mathcal{X}$ on $G$ is said to be \emph{linear} if its flow
$(\varphi_{t})_{t\in \mathbb{R}}$ is a $1$-parameter subgroup of $\mathrm{Aut}%
(G)$. Associate to any linear vector field $\mathcal{X}$ there is a derivation
$\mathcal{D}$ of $\mathfrak{g}$ defined by the formula
\begin{equation}
\mathcal{D}Y=-[\mathcal{X},Y](e),\mbox{ for all }Y\in \mathfrak{g}%
.\label{derivation}%
\end{equation}
The relation between $\varphi_{t}$ and $\mathcal{D}$ is given by the formula
\begin{equation}
(d\varphi_{t})_{e}=\mathrm{e}^{t\mathcal{D}}\; \; \; \mbox{ for all }\; \;
\;t\in \mathbb{R}.\label{derivativeonorigin}%
\end{equation}
In particular, it holds that
\[
\varphi_{t}(\exp Y)=\exp(\mathrm{e}^{t\mathcal{D}}Y),\mbox{ for all }t\in
\mathbb{R},Y\in \mathfrak{g}.
\]

The above equation implies that if $\mathcal{D}\equiv0$ we necessarily have
$\mathcal{X}\equiv0$. Since we are interested in linear systems with
nontrivial drift, we will always assume $\mathcal{D}\neq0$.

Let $G$ be a Lie group and $\widetilde{G}$ its simply connected covering. Let
$\mathcal{X}$ be a linear vector field on $G$ and $\mathcal{D}$ its associated
derivation. By Theorem 2.2 of \cite{AyTi}, there exists a unique linear vector
field $\widetilde{\mathcal{X}}$ on $\widetilde{G}$ whose associated derivation
is $\mathcal{D}$. If we denote, respectively by, $\{ \widetilde{\varphi_{t}%
}\}_{t\in \mathbb{R}}$ and $\{ \varphi_{t}\}_{t\in \mathbb{R}} $ the flows of
$\widetilde{\mathcal{X}}$ and $\mathcal{X}$ we have
\[
\pi(\widetilde{\varphi_{t}}(\exp_{\widetilde{G}} X))=\pi(\exp_{\widetilde{G}%
}(\mathrm{e}^{t\mathcal{D}}X))=\exp_{G}(\mathrm{e}^{t\mathcal{D}}%
X)=\varphi_{t}(\exp_{G}X), \; \; \mbox{ for any }\; \;X\in \mathfrak{g}%
\]
where $\pi:\widetilde{G}\rightarrow G$ is the canonical projection. By
connectedness it holds that
\[
\pi \circ \widetilde{\varphi_{t}}=\varphi_{t}\circ \pi
\; \; \mbox{ for any }\; \;t\in \mathbb{R}%
\]
implying that $\widetilde{\mathcal{X}}$ and $\mathcal{X}$ are $\pi$-related.

Next, we explicitly some decompositions of the Lie algebra $\mathfrak{g}$
induced by any given derivation $\mathcal{D}$. To do that, let us consider,
for any eigenvalue $\alpha$ of $\mathcal{D}$, the real generalized eigenspaces
of $\mathcal{D}$
\[
\mathfrak{g}_{\alpha}=\{X\in \mathfrak{g}:(\mathcal{D}-\alpha I)^{n}%
X=0\; \; \mbox{for some }n\geq1\},\; \; \mbox{ if }\; \; \alpha \in \mathbb{R}%
\; \; \mbox{
and}
\]%
\[
\mathfrak{g}_{\alpha}=\mathrm{span}\{ \mathrm{Re}(v),\mathrm{Im}(v);\; \;v\in
\bar{\mathfrak{g}}_{\alpha}\},\; \; \mbox{ if }\; \; \alpha \in \mathbb{C}%
\]
where $\bar{\mathfrak{g}}=\mathfrak{g}+i\mathfrak{g}$ is the complexification
of $\mathfrak{g}$ and $\bar{\mathfrak{g}}_{\alpha}$ the generalized eigenspace
of $\bar{\mathcal{D}}=\mathcal{D}+i\mathcal{D}$, the extension of
$\mathcal{D}$ to $\bar{\mathfrak{g}}$. By Proposition 3.1 of \cite{SM1} it
holds that $[\bar{\mathfrak{g}}_{\alpha},\bar{\mathfrak{g}}_{\beta}%
]\subset \bar{\mathfrak{g}}_{\alpha+\beta}$ when $\alpha+\beta$ is an
eigenvalue of $\mathcal{D}$ and zero otherwise. By considering in
$\mathfrak{g}$ the subspaces $\mathfrak{g}_{\lambda}:=\bigoplus_{\alpha
;\mathrm{Re}(\alpha)=\lambda}\mathfrak{g}_{\alpha}$, where $\mathfrak{g}%
_{\lambda}=\{0\}$ if $\lambda \in \mathbb{R}$ is not the real part of any
eigenvalue of $\mathcal{D}$, we get
\[
\lbrack \mathfrak{g}_{\lambda_{1}},\mathfrak{g}_{\lambda_{1}}]\subset
\mathfrak{g}_{\lambda_{1}+\lambda_{2}}\; \; \; \mbox{ when }\lambda_{1}%
+\lambda_{2}=\mathrm{Re}(\alpha)\; \mbox{ for some eigenvalue }\alpha
\; \mbox{ of }\; \mathcal{D}\; \mbox{ and zero otherwise}.
\]

This fact allow us to decompose $\mathfrak{g}$ as
\[
\mathfrak{g}=\mathfrak{g}^{+}\oplus \mathfrak{g}^{0}\oplus \mathfrak{g}^{-}%
\]
where
\[
\mathfrak{g}^{+}=\bigoplus_{\alpha:\, \mathrm{Re}(\alpha)> 0}\mathfrak{g}%
_{\alpha},\hspace{1cm}\mathfrak{g}^{0}=\bigoplus_{\alpha:\, \mathrm{Re}%
(\alpha)=0}\mathfrak{g}_{\alpha}\hspace{1cm}\mbox{ and }\hspace{1cm}%
\mathfrak{g}^{-}=\bigoplus_{\alpha:\, \mathrm{Re} (\alpha)<0}\mathfrak{g}%
_{\alpha}.
\]
It is easy to see that $\mathfrak{g}^{+},\mathfrak{g}^{0},\mathfrak{g}^{-}$
are $\mathcal{D}$-invariant Lie algebras and $\mathfrak{g}^{+}$,
$\mathfrak{g}^{-}$ are nilpotent.

At the Lie group level we will denote by $G^{+}$, $G^{-}$, $G^{0}$, $G^{+,0},
$ and $G^{-,0}$ the connected Lie subgroups of $G$ with Lie algebras
$\mathfrak{g}^{+}$, $\mathfrak{g}^{-}$, $\mathfrak{g}^{0}$, $\mathfrak{g}%
^{+,0}:=\mathfrak{g}^{+}\oplus \mathfrak{g}^{0}$ and $\mathfrak{g}%
^{-,0}:=\mathfrak{g}^{-}\oplus \mathfrak{g}^{0}$ respectively. The above
subgroups play a fundamental role in the understand of the dynamics of linear
control system as showed in \cite{DSAy}, \cite{DsAyGZ} and \cite{DS}. By
Proposition 2.9 of \cite{DS}, all the above subgroups are $\varphi$-invariant
and closed. Moreover, if $G$ is a solvable Lie group then $G=G^{+, 0}%
G^{-}=G^{-, 0}G^{+}$.

The next lemma shows that, for solvable Lie groups, the nilradical contains
all the generalized eigenspaces associated with nonzero eigenvalues.

\begin{lemma}
\label{nilradical} Let $\mathfrak{g}$ be a solvable Lie algebra and
$\mathfrak{n}$ its nilradical. If $\mathcal{D}$ is a derivation of
$\mathfrak{g}$ then, for any nonzero eigenvalue $\alpha$ of $\mathcal{D}$, it
holds that $\mathfrak{g}_{\alpha}\subset \mathfrak{n}.$
\end{lemma}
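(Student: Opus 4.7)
The plan rests on the classical fact that for any derivation $\mathcal{D}$ of a solvable Lie algebra $\mathfrak{g}$, one has $\mathcal{D}(\mathfrak{g})\subset \mathfrak{n}$. Granting this, the argument is short: the space $\mathfrak{g}_{\alpha}$ is $\mathcal{D}$-invariant, and the restriction $\mathcal{D}|_{\mathfrak{g}_{\alpha}}$ has as its spectrum $\{\alpha\}$ (if $\alpha\in\mathbb{R}$) or $\{\alpha,\bar{\alpha}\}$ (if $\alpha\in\mathbb{C}$), by the very definition of the generalized eigenspace recalled in the preceding paragraphs. In either case, since $\alpha\neq 0$, this restriction is invertible, so
\[
\mathfrak{g}_{\alpha}=\mathcal{D}(\mathfrak{g}_{\alpha})\subset \mathcal{D}(\mathfrak{g})\subset \mathfrak{n},
\]
which is the desired conclusion.

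The real content lies in establishing $\mathcal{D}(\mathfrak{g})\subset\mathfrak{n}$. For this I would pass to the semidirect extension $\widetilde{\mathfrak{g}}=\mathfrak{g}\oplus \mathbb{R}d$ in which $\mathfrak{g}$ sits as an ideal and the extra generator acts by $[d,X]:=\mathcal{D}X$. Being a one-dimensional extension of a solvable algebra, $\widetilde{\mathfrak{g}}$ is itself solvable; hence by the standard theorem that the derived algebra of a real solvable Lie algebra is nilpotent (Lie's theorem on the complexification combined with Engel), the subspace
\[
[\widetilde{\mathfrak{g}},\widetilde{\mathfrak{g}}]=\mathcal{D}(\mathfrak{g})+[\mathfrak{g},\mathfrak{g}]
\]
is a nilpotent subalgebra of $\mathfrak{g}$. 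A brief computation using the Leibniz identity $[Y,\mathcal{D}X]=\mathcal{D}[Y,X]-[\mathcal{D}Y,X]$ for $Y\in\mathfrak{g}$ shows that this subalgebra is in fact an ideal of $\mathfrak{g}$. As a nilpotent ideal of $\mathfrak{g}$, it is contained in the nilradical $\mathfrak{n}$, which yields $\mathcal{D}(\mathfrak{g})\subset\mathfrak{n}$.

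The step I expect to require the most care is the complex eigenvalue case of the invertibility claim: one must verify that $\mathcal{D}$ is genuinely invertible on the \emph{real} space $\mathfrak{g}_{\alpha}$, not merely on its complexification $\bar{\mathfrak{g}}_{\alpha}\oplus \bar{\mathfrak{g}}_{\bar{\alpha}}$. This follows directly from the description of $\mathfrak{g}_{\alpha}$ as the real span of the real and imaginary parts of vectors in $\bar{\mathfrak{g}}_{\alpha}$, together with the observation that $\det\mathcal{D}|_{\mathfrak{g}_{\alpha}}$ is a nonzero power of $|\alpha|^{2}$ whenever $\alpha\neq 0$. Everything else in the argument is bookkeeping with the definitions and the decomposition already installed earlier in the section.
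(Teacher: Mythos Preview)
Your argument is correct and rests on the same key ingredient as the paper's proof, namely the inclusion $\mathcal{D}(\mathfrak{g})\subset\mathfrak{n}$, which the paper simply invokes as a known fact while you supply a proof via the semidirect extension $\widetilde{\mathfrak{g}}=\mathfrak{g}\rtimes\mathbb{R}d$. The deduction from this fact, however, is organized differently: the paper proceeds by induction on $n$ to show $\ker(\mathcal{D}-\alpha I)^n\subset\mathfrak{n}$ in the real case, and for complex $\alpha$ it passes to the complexification, identifies the nilradical $\bar{\mathfrak{n}}$ of $\bar{\mathfrak{g}}$ as $\mathfrak{n}^{\ast}+i\mathfrak{n}^{\ast}$ for a nilpotent ideal $\mathfrak{n}^{\ast}\subset\mathfrak{g}$, and then pulls the conclusion back to the real form. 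Your route---observing that $\mathcal{D}|_{\mathfrak{g}_\alpha}$ is invertible whenever $\alpha\neq 0$, hence $\mathfrak{g}_\alpha=\mathcal{D}(\mathfrak{g}_\alpha)\subset\mathfrak{n}$---handles both the real and complex cases uniformly in a single line and avoids the separate analysis of the complexified nilradical. The paper's induction is more hands-on and perhaps easier to verify step by step; yours is shorter and more conceptual, at the cost of the spectral observation you flag in your last paragraph.
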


\begin{proof}
If $\alpha \in \mathbb{R}^{\ast}$ then $\mathfrak{g}_{\alpha}=\bigcup
_{n\in \mathbb{N}}\ker(\mathcal{D}-\alpha I)^{n}$. If $n=1$ we have that
$X\in \ker(\mathcal{D}-\alpha I)$ is such that $\mathcal{D}X=\alpha X.$ Since
$\mathcal{D}\mathfrak{g}\subset \mathfrak{n}$ and $\alpha \neq0$ we get
$X\in \mathfrak{n}$ implying that $\ker(\mathcal{D}-\alpha I)\subset
\mathfrak{n}$. Inductively, if $\ker(\mathcal{D}-\alpha I)^{n}\subset
\mathfrak{n}$ then, for any $X\in \ker(\mathcal{D}-\alpha I)^{n+1}$ it holds
that
\[
0=(\mathcal{D}-\alpha I)^{n+1}X=(\mathcal{D}-\alpha I)^{n}(\mathcal{D}-\alpha
I)X\implies(\mathcal{D}-\alpha I)X\in \ker(\mathcal{D}-\alpha I)^{n}%
\subset \mathfrak{n}.
\]
Using again that $\alpha \neq0$ and $\mathcal{D}\mathfrak{g}\subset
\mathfrak{n}$ gives us $(\mathcal{D}-\alpha I)X\in \mathfrak{n}$ which implies
that $X\in \mathfrak{n}$. Therefore, $\mathfrak{g}_{\alpha}\subset \mathfrak{n}$
as stated.

If $\alpha \in \mathbb{C}^{\ast}$ we have as in the real case that
$\bar{\mathfrak{g}}_{\alpha}\subset \bar{\mathfrak{n}}$, where $\bar
{\mathfrak{n}} $ is the nilradical of $\bar{\mathfrak{g}}$. Since the
conjugation in $\bar{\mathfrak{g}}$ is an automorphism we have that
$\bar{\mathfrak{n}}$ is invariant by conjugation and hence $\bar{\mathfrak{n}%
}=\mathfrak{n}^{\ast}+i\mathfrak{n}^{\ast}$ for some subspace $\mathfrak{n}%
^{\ast}\subset \mathfrak{g}$. A simple calculation shows that $\mathfrak{n}%
^{\ast}$ is in fact, a nilpotent ideal of $\mathfrak{g}$ and consequently
$\mathfrak{n}^{\ast}\subset \mathfrak{n}$. Since $\mathrm{Re}(v),\mathrm{Im}%
(v)\in \mathfrak{n}^{\ast}$ for any $v\in \bar{\mathfrak{g}}_{\alpha}$ we get
that $\mathfrak{g}_{\alpha}\subset \mathfrak{n}$ which concludes the proof.
\end{proof}

By Lemma 2.3 of \cite{DS}, the above subalgebras and subgroups are preserved
by homomorphisms in the following sense: If $\psi:G_{1}\rightarrow G_{2}$ is a
surjective homomorphism between Lie groups such that $(d\psi)_{e}%
\circ \mathcal{D}_{1}=\mathcal{D}_{2}\circ(d\psi)_{e}$, where $\mathcal{D}_{i}$
is a derivation in the Lie algebra $\mathfrak{g}_{i}$ of $G_{i}$, $i=1, 2$
then
\begin{equation}
\label{homo}(d\psi)_{e}\mathfrak{g}_{1}^{*}=\mathfrak{g}_{2}^{*}%
\; \; \mbox{ and }\; \; \psi(G_{1}^{*})=G_{2}^{*}, \; \; \mbox{ where }\; \;*=+, 0,
-.
\end{equation}

\subsection{Linear control systems}

Let $G$ be a connected Lie group and $\mathfrak{g}$ its Lie algebra identified
with the vector space of all left-invariant vector fields. A \emph{linear
control system} on $G$ is given a family of ordinary differential equations
\begin{equation}
\dot{g}=\mathcal{X}(g)+\sum_{j=1}^{m}u_{i}Y^{j}(g),\label{linearsystem}%
\end{equation}
where the \emph{drift} $\mathcal{X}$ is a linear vector field and the
\emph{control vectors} $Y^{1},\ldots,Y^{m}$ are, left-invariant vector fields.
The \emph{control functions} $u=(u_{1},\ldots,u_{m})$ belongs to
$\mathcal{U}\subset L_{\mathrm{loc}}^{\infty}(\mathbb{R},\mathbb{R}^{m})$, a
subset that contains the piecewise constant functions and is stable by
concatenations, that is, if $u_{1},u_{2}\in \mathcal{U}$ then the function $u $
defined by
\[
u(t)=\left \{
\begin{array}
[c]{cc}%
u_{1}(t), & t\in(-\infty,T)\\
u_{2}(t-T) & t\in \lbrack T,\infty)
\end{array}
\right.
\]
belongs to $\mathcal{U}$.

If $\phi(t, g, u)$ denotes the solution of (\ref{linearsystem}) associated
with $u\in \mathcal{U}$ and starting at $g\in G$ then
\[
\phi(t, g, u)=\varphi_{t}(g)\phi(t, e, u)=R_{\phi(t, e, u)}\left( \varphi
_{t}(g)\right) .
\]

The \emph{reachable set from $g$ at time} $t>0$ and the \emph{reachable set
from} $g$ are given, respectively, by
\[
\mathcal{A}_{t}(g):=\{ \phi(t,g,u),\; \;u\in \mathcal{U}%
\} \; \; \; \mbox{ and }\; \; \; \mathcal{A}(g):=\bigcup_{t>0}\mathcal{A}_{t}(g)
\]
Analogously, the \emph{controllable set to $g$ at time} $t>0$ and the
\emph{controllable set to $g$} are given, respectively, by
\[
\mathcal{A}_{t}^{\ast}(g):=\{h\in G;\; \exists u\in \mathcal{U};\; \; \phi
(t,h,u)=g\} \; \; \; \mbox{ and }\; \; \; \mathcal{A}^{\ast}(g):=\bigcup
_{t>0}\mathcal{A}_{t}^{\ast}(g)
\]
For the particular case where $g=e$ is the identity element of $G$\ we denote
the above sets only by $\mathcal{A}_{t},\mathcal{A},\mathcal{A}_{t}^{\ast}$
and $\mathcal{A}^{\ast}$, respectively.

We will say that the linear control system (\ref{linearsystem}) is
\emph{controllable} if for any $g, h\in G$ it holds that $h\in \mathcal{A}(g)$.
It is not hard to see that the controllability of (\ref{linearsystem}) is
equivalent to the equality $G=\mathcal{A}\cap \mathcal{A}^{*}$.

Let us denote by $\Delta$ the Lie subalgebra of $\mathfrak{g}$ generated by
$Y^{1}, \ldots, Y^{m}$ and by $G_{\Delta}$ its associated connected Lie
subgroup. The linear control system (\ref{linearsystem}) is said to satisfy
the \emph{ad-rank condition} if $\mathfrak{g}$ is the smallest $\mathcal{D}%
$-invariant subspace containing $\Delta$. It is said to satisfy the \emph{Lie
algebra rank condition} (LARC) if $\mathfrak{g}$ is the smallest $\mathcal{D}%
$-invariant subalgebra containing $\Delta$.

Since we are interested in the controllability of linear control systems and
the control functions are taking values in the whole $\mathbb{R}^{m}$, Theorem
3.5 in \cite{Jurd} implies that $G_{\Delta}\subset \operatorname{cl}%
(\mathcal{A})\cap \operatorname{cl}(\mathcal{A}^{\ast})$ and also that the
closures $\operatorname{cl}(\mathcal{A})$ and $\operatorname{cl}%
(\mathcal{A}^{\ast})$ remain the same if we change $Y_{1},\ldots,Y^{m}$ for
any basis of $\Delta$. Furthermore, under the LARC it holds that
$G=\operatorname{cl}(\mathcal{A}^{(\ast)})$ iff $G=\mathcal{A}^{(\ast)}$ and
therefore, if $\dim \Delta=\dim \mathfrak{g}$ the system is trivially
controllable. Since $G$ is an analytic manifold and the linear and invariant
vector fields are complete, Theorem 3.1 of \cite{Jurd2} implies that the LARC
is a necessary condition for controllability.

By the previous discussion, under the LARC the controllability of
(\ref{linearsystem}) only depends on $\mathcal{X}$ and on $\Delta$. Therefore,
we will use $\Sigma(\mathcal{X},\Delta)$ to denote the linear system with
drift $\mathcal{X}$ and control vectors given by any basis of $\Delta$, where
$\Delta$ is a proper, nontrivial subalgebra of $\mathfrak{g}$.

The next results relate the subgroups associated with the derivation induced
by $\mathcal{X}$ with the reachable and controllable sets.

\begin{lemma}
\label{translation} It holds:

1. Let $C\in \{ \mathcal{A}, \mathcal{A}^{*}, \operatorname{cl}(\mathcal{A}),
\operatorname{cl}(\mathcal{A}^{*})\}$ and $g\in G$. If $\{ \varphi_{t}(g),
t\in \mathbb{R}\} \subset C $ then $L_{g}(C)\subset C$;

2. If $\varphi_{t}(g)=g$ for all $t\in \mathbb{R}$ then $g\in \mathcal{A}$ if
and only if $g^{-1}\in \mathcal{A}^{*}$;
\end{lemma}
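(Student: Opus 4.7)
My plan is to exploit the cocycle identity $\phi(t,g,u)=\varphi_t(g)\,\phi(t,e,u)$ stated just before the lemma, combined with the fact that each $\varphi_t$ is an automorphism of $G$ and that $\mathcal{U}$ is stable under concatenation. I would handle the four choices of $C$ in part~1 together, treating $\mathcal{A}$ and $\mathcal{A}^{\ast}$ first and then extending to their closures by a sequential approximation.

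For $C=\mathcal{A}$, the main observation is the rewriting
\[
gh=\varphi_\tau\bigl(\varphi_{-\tau}(g)\bigr)\,\phi(\tau,e,u_1)=\phi\bigl(\tau,\varphi_{-\tau}(g),u_1\bigr),
\]
valid whenever $u_1$ steers $e$ to $h$ in time $\tau$. To reach $gh$ from $e$ it therefore suffices to first steer $e$ to $\varphi_{-\tau}(g)\in\mathcal{A}$ (which exists by hypothesis) and then concatenate with $u_1$. For $C=\mathcal{A}^{\ast}$ I would do the dual manipulation: if $u_1$ steers $h$ to $e$ in time $\tau$, then the cocycle identity forces $\phi(\tau,e,u_1)=\varphi_\tau(h)^{-1}$ and hence
\[
\phi(\tau,gh,u_1)=\varphi_\tau(g)\,\varphi_\tau(h)\,\varphi_\tau(h)^{-1}=\varphi_\tau(g)\in\mathcal{A}^{\ast};
\]
a second control then takes $\varphi_\tau(g)$ down to $e$, so $gh\in\mathcal{A}^{\ast}$. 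To pass to the closures I would pick $h_n\in\mathcal{A}$ with $h_n\to h$ realized by $\phi(\tau_n,e,u_n)=h_n$, approximate $\varphi_{-\tau_n}(g)\in\operatorname{cl}(\mathcal{A})$ by a sequence $x_{n,k}\in\mathcal{A}$, invoke continuous dependence of $\phi(\tau_n,\cdot,u_n)$ on its initial data (itself a consequence of the cocycle identity and the continuity of $\varphi_{\tau_n}$) to conclude that $gh_n=\lim_k\phi(\tau_n,x_{n,k},u_n)\in\operatorname{cl}(\mathcal{A})$, and then let $n\to\infty$. The case of $\operatorname{cl}(\mathcal{A}^{\ast})$ is handled identically.

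Part~2 is a one-line consequence of the same identity: when $\varphi_t(g)=g$ for every $t$ one also has $\varphi_t(g^{-1})=g^{-1}$, so the equality $\phi(t,e,u)=g$ is equivalent to
\[
\phi(t,g^{-1},u)=\varphi_t(g^{-1})\,\phi(t,e,u)=g^{-1}g=e,
\]
which is precisely the desired equivalence $g\in\mathcal{A}\iff g^{-1}\in\mathcal{A}^{\ast}$. The only delicacy in the argument is the bookkeeping of the time parameters when splicing the two control pieces together; but this is absorbed by the hypothesis that the \emph{entire} orbit $\{\varphi_t(g):t\in\mathbb{R}\}$ lies in $C$, so any required time shift by $\pm\tau$ is automatically available, and after that the verification is mechanical.
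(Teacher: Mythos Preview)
Your proposal is correct and follows the same strategy as the paper. For item~2 your computation is line-for-line the paper's own argument (exploit $\phi(t,g^{-1},u)=\varphi_t(g^{-1})\phi(t,e,u)$ together with $\varphi_t(g^{-1})=g^{-1}$). For item~1 the paper simply declares it a ``slight modification of Lemma~3.1 of \cite{DS}'' and omits the details; the argument you supply---rewrite $gh=\phi(\tau,\varphi_{-\tau}(g),u_1)$, use the hypothesis to reach $\varphi_{-\tau}(g)$ first, concatenate, and then pass to closures via the continuity $x\mapsto\varphi_{\tau}(x)\,\phi(\tau,e,u)$---is exactly the standard one underlying that reference, so nothing is lost.
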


\begin{proof}
Item 1. is an slight modification of Lemma 3.1 of \cite{DS} and hence we will
omit its proof. For item 2., if $g\in \mathcal{A}$ there exists $t>0,
u\in \mathcal{U}$ with $g=\phi(t, e, u)$. Hence,
\[
\phi(t, g^{-1}, u)=\varphi_{t}(g^{-1})\phi(t, e, u)=\varphi_{t}(g)^{-1}%
g=g^{-1}g=e
\]
implying that $g^{-1}\in \mathcal{A}^{*}$. Reciprocally, if $g^{-1}%
\in \mathcal{A}^{*}$ then $e=\phi(t, g^{-1}, u)$ for some $t>0$, $u\in
\mathcal{U}$ and analogously
\[
e=\phi(t, g^{-1}, u)=\varphi_{t}(g^{-1})\phi(t, e, u)=g^{-1}\phi(t, e,
u)\implies g=\phi(t, e, u)\in \mathcal{A}%
\]
concluding the proof.
\end{proof}

Concerning the controllability of linear control systems we have the following
results from \cite{DS} (see Theorem 3.7)

\begin{theorem}
\label{meuteorema} If $\Sigma(\mathcal{X}, \Delta)$ is a linear system on a
solvable Lie group $G$ and assume that $\mathcal{A}$ is open, then
\[
G^{+, 0}\subset \mathcal{A}\; \; \; \mbox{ and }\; \; \;G^{-, 0}\subset
\mathcal{A}^{*}.
\]
In particular, if $\mathcal{D}$ has only eigenvalues with zero real part and
$\mathcal{A}$ is open, then $\Sigma(\mathcal{X}, \Delta)$ is controllable.
\end{theorem}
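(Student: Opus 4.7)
The plan is to establish the two inclusions $G^{+,0}\subset\mathcal{A}$ and $G^{-,0}\subset\mathcal{A}^{*}$ separately: the former by showing each of $G^{+}$ and $G^{0}$ lies in $\mathcal{A}$ and then invoking the decomposition $G^{+,0}=G^{0}G^{+}$ available in the solvable case, and the latter by the symmetric argument applied to the time-reversed system (drift $-\mathcal{X}$, derivation $-\mathcal{D}$, swapping $\mathfrak{g}^{+}$ and $\mathfrak{g}^{-}$). The closing ``in particular'' assertion is then immediate: if every eigenvalue of $\mathcal{D}$ has zero real part then $\mathfrak{g}=\mathfrak{g}^{0}$ and $G^{+,0}=G^{-,0}=G$, so $G=\mathcal{A}\cap\mathcal{A}^{*}$ and the system is controllable.

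Two preliminaries drive the proof. First, the standard observation $e\in\mathrm{cl}(\mathcal{A})\cap\mathrm{cl}(\mathcal{A}^{*})$, because $\phi(\cdot,e,u)$ depends continuously on $(t,u)$ and equals $e$ at $t=0$. Second, a forward invariance of $\mathcal{A}$ under $\varphi$: if $h\in\mathcal{A}$ with $\phi(r,e,u)=h$ and $s>0$, then concatenating with the zero control on $[r,r+s]$ gives $\phi(r+s,e,u\ast 0)=\varphi_{s}(h)\cdot e=\varphi_{s}(h)$, so $\varphi_{s}(\mathcal{A})\subset\mathcal{A}$ for every $s>0$. Equivalently, whenever $\varphi_{-s}(g)\in\mathcal{A}$ for some $s>0$, one has $g\in\mathcal{A}$.

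For $g\in G^{+}$ the eigenvalues of $\mathcal{D}$ on $\mathfrak{g}^{+}$ have strictly positive real parts, so $\varphi_{-t}(g)\to e$ in $G^{+}$ as $t\to+\infty$. Since $\mathcal{A}$ is open and $e\in\mathrm{cl}(\mathcal{A})$ (and the LARC, which is forced by openness of $\mathcal{A}$ via Jurdjevic's density theorem, controls the infinitesimal shape of $\mathcal{A}$ near $e$), one argues that the backward orbit $\{\varphi_{-t}(g):t>0\}$ must cross $\mathcal{A}$ at some time $t_{0}$, and the second preliminary above then gives $g\in\mathcal{A}$. In particular, the full orbit $\{\varphi_{t}(g):t\in\mathbb{R}\}$ lies in $\mathcal{A}$, so by Lemma~\ref{translation} one has $L_{g}(\mathcal{A})\subset\mathcal{A}$. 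For $g\in G^{0}$ the eigenvalues of $\mathcal{D}$ on $\mathfrak{g}^{0}$ are purely imaginary, so the flow on $G^{0}$ is recurrent; using recurrence together with the forward invariance and the preceding $G^{+}\subset\mathcal{A}$, one shows $\{\varphi_{t}(g):t\in\mathbb{R}\}\subset\mathcal{A}$ as well. Applying Lemma~\ref{translation} once more, $g\,\mathcal{A}\subset\mathcal{A}$ for every $g\in G^{0}$, so $G^{0}G^{+}\subset\mathcal{A}$, i.e., $G^{+,0}\subset\mathcal{A}$.

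The main obstacle is the step for $g\in G^{0}$. When the imaginary eigenvalues of $\mathcal{D}$ carry nontrivial Jordan blocks the orbits on $G^{0}$ are not periodic but exhibit polynomial drift, so the genuine recurrence needed has to be extracted from a careful analysis of the flow on $G^{0}$; moreover, one needs the orbit inside $\mathcal{A}$, not merely inside $\mathrm{cl}(\mathcal{A})$, which forces one to transport the openness of $\mathcal{A}$ along the entire orbit. The $G^{+}$ step, by contrast, is considerably easier, as the backward flow genuinely contracts to $e$, so any open neighborhood of $e$ meeting $\mathcal{A}$ can be transported by the flow to yield $g\in\mathcal{A}$.
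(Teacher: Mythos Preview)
The paper does not prove this theorem at all: it is quoted verbatim from \cite{DS} (Theorem~3.7 there), so there is no ``paper's own proof'' to compare with. What I can do is assess whether your sketch would stand on its own.

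Your $G^{+}$ step is essentially right but can be streamlined. You only use $e\in\operatorname{cl}(\mathcal{A})$ and then invoke an unspecified ``infinitesimal shape'' argument; in fact $e\in\mathcal{A}$ outright, because $\mathcal{X}(e)=0$ forces $\phi(t,e,0)=e\in\mathcal{A}_{t}$ for every $t>0$. Since $\mathcal{A}$ is open it is then a genuine neighborhood of $e$, the backward orbit $\varphi_{-t}(g)\to e$ enters $\mathcal{A}$, and forward invariance gives $g\in\mathcal{A}$. No appeal to LARC or Jurdjevic's density theorem is needed here.

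Your $G^{0}$ step, however, is a real gap, and you correctly identify it as such. The recurrence mechanism you propose simply does not exist when $\mathcal{D}|_{\mathfrak{g}^{0}}$ has a nontrivial nilpotent part: writing $\mathcal{D}|_{\mathfrak{g}^{0}}=S+N$ with $S$ semisimple (purely imaginary spectrum) and $N\neq 0$ nilpotent, the factor $\mathrm{e}^{tN}$ grows polynomially, so $\varphi_{-t}(g)$ need not return near $g$ or near $e$, and there is no time at which you can feed the orbit into $\mathcal{A}$. The phrase ``using recurrence together with the forward invariance and the preceding $G^{+}\subset\mathcal{A}$'' does not become an argument; the $G^{+}$ inclusion gives you nothing about points of $G^{0}$ because $G^{0}\cap G^{+}=\{e\}$. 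The proof in \cite{DS} does not proceed via pointwise recurrence on $G^{0}$; it exploits the solvable structure more directly (in particular, the exact-time reachable sets $\mathcal{A}_{\tau}$ and the product relation $\varphi_{s}(\mathcal{A}_{t})\cdot\mathcal{A}_{s}\subset\mathcal{A}_{s+t}$, together with the decomposition $G=G^{-,0}G^{+}$) to bypass the Jordan-block obstruction. As written, your plan for $G^{0}$ would only go through in the semisimple case, so the proposal is incomplete.
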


\begin{remark}
\label{importantremark} We should notice that the condition on the openness of
$\mathcal{A}$ is guaranteed, for instance, if $\Sigma(\mathcal{X}, \Delta) $
satisfies the ad-rank condition (see Theorem 3.5 of \cite{AyTi}). In
particular, if $\Delta$ has codimension one in $\mathfrak{g}$ then the LARC is
equivalent to the ad-rank condition.
\end{remark}

\begin{remark}
An extension of Theorem \ref{meuteorema} for a much larger class of Lie groups
was proved in \cite{DSAy} under the assumption that $\mathcal{A}_{\tau}$ is a
neighborhood of the identity element for some $\tau>0$.
\end{remark}

Let $G$ and $H$ be Lie groups and $\psi:G\rightarrow H$ a surjective Lie group
homomorphism. If $\mathcal{X}$ is a linear vector field on $G$ then
$\mathcal{X}_{\psi}:=\psi_{\ast}\mathcal{X}$ is a linear vector field on $H $.
Therefore, if $\Sigma(\mathcal{X},\Delta)$ is a linear control system on $G$,
by considering $\Delta_{\psi}:=(d\psi)_{e}\Delta$ we have that $\Sigma
=\Sigma(\mathcal{X}_{\psi},\Delta_{\psi})$ is a linear control system on $H$
that is $\psi$ conjugated to $\Sigma(\psi,\Delta)$. As a particular case, if
$\widetilde{G}$ is the simply connected covering of $G$ and $\Sigma
(\mathcal{X},\Delta)$ is a linear control system on $G$, the control system
$\Sigma(\widetilde{\mathcal{X}},\Delta)$ is $\pi$-conjugated to $\Sigma
(\mathcal{X},\Delta)$. The next proposition states the main relationships
between conjugated control systems.

\begin{proposition}
\label{simplesmente} Let $\Sigma(\mathcal{X}, \Delta)$ be a linear control
system on $G$ and $\psi:G\rightarrow H$ a surjective homomorphism. It holds:

\begin{itemize}
\item[1.] If $\Sigma(\mathcal{X}, \Delta)$ is controllable, then
$\Sigma(\mathcal{X}_{\psi}, \Delta_{\psi})$ is controllable;

\item[2.] If $\ker \psi \subset \operatorname{cl}(\mathcal{A})\cap
\operatorname{cl}(\mathcal{A}^{*})$ and $\Sigma(\mathcal{X}_{\psi},
\Delta_{\psi})$ is controllable, then $\Sigma(\mathcal{X}, \Delta)$ is controllable;

\item[3.] If $\Sigma(\mathcal{X}, \Delta)$ satisfies the ad-rank condition
then $\Sigma(\mathcal{X}, \Delta)$ is controllable if and only if
$\Sigma(\widetilde{\mathcal{X}}, \Delta)$ is controllable.
\end{itemize}
\end{proposition}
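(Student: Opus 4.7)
Item 1 is a direct conjugation argument. Since $\psi_{\ast}\mathcal{X}=\mathcal{X}_{\psi}$ by construction and $(d\psi)_{e}$ sends the control vectors of $\Delta$ to those of $\Delta_{\psi}$, any solution $\phi(\cdot,g,u)$ of $\Sigma(\mathcal{X},\Delta)$ is mapped by $\psi$ to the solution $\psi\circ\phi(\cdot,g,u)$ of $\Sigma(\mathcal{X}_{\psi},\Delta_{\psi})$ starting at $\psi(g)$. Given $h_{1},h_{2}\in H$, I pick any preimages $g_{i}\in\psi^{-1}(h_{i})$, connect them by controllability on $G$, and push the trajectory to $H$.

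Item 2 is the heart of the proposition and proceeds by kernel absorption. Fix $g\in G$. Controllability of $\Sigma(\mathcal{X}_{\psi},\Delta_{\psi})$ supplies $t>0$ and $u\in\mathcal{U}$ with $\phi_{H}(t,e,u)=\psi(g)$; by item 1 the element $g_{0}:=\phi_{G}(t,e,u)\in\mathcal{A}$ satisfies $\psi(g_{0})=\psi(g)$, so $k:=gg_{0}^{-1}\in\ker\psi$. From $\psi\circ\varphi_{t}=\varphi_{t}^{H}\circ\psi$ and $\varphi_{t}^{H}(e_{H})=e_{H}$ I conclude $\{\varphi_{t}(k)\}_{t\in\mathbb{R}}\subset\ker\psi\subset\operatorname{cl}(\mathcal{A})$, and Lemma \ref{translation}(1) then yields $L_{k}(\operatorname{cl}(\mathcal{A}))\subset\operatorname{cl}(\mathcal{A})$, hence $g=kg_{0}\in\operatorname{cl}(\mathcal{A})$. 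A symmetric argument places $g$ in $\operatorname{cl}(\mathcal{A}^{\ast})$, giving $\operatorname{cl}(\mathcal{A})=\operatorname{cl}(\mathcal{A}^{\ast})=G$. Since the notation $\Sigma(\mathcal{X},\Delta)$ tacitly carries the LARC, the equivalence $G=\operatorname{cl}(\mathcal{A}^{(\ast)})\Leftrightarrow G=\mathcal{A}^{(\ast)}$ recalled in the preliminaries upgrades this to $\mathcal{A}=\mathcal{A}^{\ast}=G$, i.e.\ controllability.

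For item 3 the forward direction is item 1 applied with $\psi=\pi$. For the converse I apply item 2 with $\psi=\pi$, reducing the task to the verification that $\ker\pi\subset\operatorname{cl}(\widetilde{\mathcal{A}})\cap\operatorname{cl}(\widetilde{\mathcal{A}}^{\ast})$. The ad-rank hypothesis is a Lie-algebra condition, so it transfers to the lifted system and, by Remark \ref{importantremark}, makes $\widetilde{\mathcal{A}}$ and $\widetilde{\mathcal{A}}^{\ast}$ open; controllability of $\Sigma$ on $G$ forces $\pi(\widetilde{\mathcal{A}})=\mathcal{A}=G$, so $\widetilde{\mathcal{A}}\cdot\ker\pi=\widetilde{G}$ and $\widetilde{\mathcal{A}}$ meets every fibre of $\pi$. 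The main obstacle is showing it meets every fibre at \emph{every} point, i.e.\ absorbing the discrete central subgroup $\ker\pi$. The leverage I would use is that $\varphi_{t}^{\widetilde{G}}$ preserves $\ker\pi$ and, being continuous with $\ker\pi$ discrete, must fix $\ker\pi$ pointwise; this activates Lemma \ref{translation}(2), which I would combine with the semigroup structure of $\widetilde{\mathcal{A}}\cap\ker\pi$ and the identity $\widetilde{\mathcal{A}}\cdot\ker\pi=\widetilde{G}$ to place every $\gamma\in\ker\pi$ in both closures.
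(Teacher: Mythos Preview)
Your treatment of items 1 and 2 is essentially the paper's argument. The paper packages item 2 set-theoretically via $\psi^{-1}(\mathcal{A}_{\psi})=\ker\psi\cdot\mathcal{A}$, but the mechanism is the same: $\ker\psi$ is $\varphi$-invariant, so Lemma~\ref{translation}(1) lets elements of $\ker\psi$ left-translate $\operatorname{cl}(\mathcal{A})$ and $\operatorname{cl}(\mathcal{A}^{\ast})$ into themselves. You are in fact more explicit than the paper about the final step from $\operatorname{cl}(\mathcal{A})=G$ to $\mathcal{A}=G$.

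Item 3, however, has a real gap. You correctly note that $\widetilde{\varphi}_{t}$ fixes $\ker\pi$ pointwise, but the semigroup/fibre sketch does not close. The identity $\widetilde{\mathcal{A}}\cdot\ker\pi=\widetilde{G}$ only says that $\widetilde{\mathcal{A}}$ meets every $\pi$-fibre; restricting to the fibre over $e$ yields nothing beyond $0\in S:=\widetilde{\mathcal{A}}\cap\ker\pi$. Since $\ker\pi\cong\mathbb{Z}$, the sub-semigroup $S$ could a priori be the non-positive integers, in which case Lemma~\ref{translation}(2) makes $\widetilde{\mathcal{A}}^{\ast}\cap\ker\pi$ the non-negative integers, and neither equals $\ker\pi$. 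Nothing in your outline excludes this scenario, so the argument does not place every $\gamma\in\ker\pi$ in $\operatorname{cl}(\widetilde{\mathcal{A}})\cap\operatorname{cl}(\widetilde{\mathcal{A}}^{\ast})$.

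The paper bypasses this difficulty by a different route that does not use $\widetilde{\mathcal{A}}\cdot\ker\pi=\widetilde{G}$ at all. From $\widetilde{\mathcal{X}}|_{\ker\pi}=0$ it deduces $\ker\pi\subset\widetilde{G}^{0}$, and then invokes Theorem~\ref{meuteorema}: the ad-rank condition makes $\widetilde{\mathcal{A}}$ open, whence $\widetilde{G}^{0}\subset\widetilde{\mathcal{A}}\cap\widetilde{\mathcal{A}}^{\ast}$ outright. This places $\ker\pi$ inside $\widetilde{\mathcal{A}}\cap\widetilde{\mathcal{A}}^{\ast}$ (not merely the closures), and item 2 finishes. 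Your fixed-point observation is the correct starting point, but it should feed into the inclusion $\ker\pi\subset\widetilde{G}^{0}$ and Theorem~\ref{meuteorema} rather than into an ad-hoc semigroup argument.
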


\begin{proof}
1. It follows directly from the fact that $\psi(\mathcal{A})=\mathcal{A}%
_{\psi}$ and $\psi(\mathcal{A}^{\ast})=\mathcal{A}_{\psi}^{\ast}$;

2. It holds that $\psi^{-1}(\mathcal{A}_{\psi})=\ker \psi \cdot \mathcal{A} $ and
$\psi^{-1}(\mathcal{A}_{\psi}^{\ast})=\ker \psi \cdot \mathcal{A}^{\ast}$. We
know that $\ker \psi$ is invariant by the flow $\{ \varphi_{t}\}_{t\in
\mathbb{R}}$ of $\mathcal{X}$, so, if $\ker \psi \subset \operatorname{cl}%
(\mathcal{A})\cap \operatorname{cl}(\mathcal{A}^{\ast})$ then $\{ \varphi
_{t}(g),t\in \mathbb{R}\} \subset \mathcal{A}\cap \mathcal{A}^{\ast}$ for all
$g\in \ker \psi$. By Lemma \ref{translation} we get that $\ker \psi
\cdot \mathcal{A}\subset \mathcal{A}$ and $\ker \psi \cdot \mathcal{A}^{\ast
}\subset \mathcal{A}^{\ast}$ and therefore, if $\Sigma(\mathcal{X}_{\psi
},\Delta_{\psi})$ is controllable we have
\[
G=\psi^{-1}(H)=\psi^{-1}(\mathcal{A}_{\psi}\cap \mathcal{A}_{\psi}^{\ast
})=(\ker \psi \cdot \mathcal{A})\cap(\ker \psi \cdot \mathcal{A}^{\ast}%
)\subset \mathcal{A}\cap \mathcal{A}^{\ast}%
\]
implying that $\Sigma(\mathcal{X},\Delta)$ is controllable.

3. If $\Sigma(\widetilde{\mathcal{X}}, \Delta)$ is controllable, then by item
1. $\Sigma(\mathcal{X}, \Delta)$ is controllable. Reciprocally, since $\ker
\pi$ is invariant by the flow of $\widetilde{\mathcal{X}}$ and it is a
discrete subgroup we must have that $\widetilde{\mathcal{X}}(\ker \pi)=0$
implying that $\ker \pi \subset \widetilde{G}^{0}$. If $\Sigma(\mathcal{X},
\Delta)$ is controllable and the ad-rank condition is satisfied, then
necessarily $\widetilde{\mathcal{A}}$ is open which by Theorem
\ref{meuteorema} implies
\[
\ker \pi \subset \widetilde{G}^{0}\subset \widetilde{\mathcal{A}}\cap
\widetilde{\mathcal{A}}^{*}%
\]
and by item 2. we have the controllability of $\Sigma(\widetilde{\mathcal{X}},
\Delta)$.
\end{proof}

We end this section with a result of Dath and Jouan characterizing the
controllability of linear control systems on the two-dimensional solvable Lie
group (see Theorem 3 of \cite{DaJo}), that it will be useful ahead.

\begin{theorem}
\label{DathJouan} Let $G$ be the two-dimensional solvable Lie group and
consider a linear control system $\Sigma(\mathcal{X},\Delta)$ on $G$ with
$\dim \Delta=1$. Then, $\Sigma(\mathcal{X},\Delta)$ is controllable if and only
if it satisfies the LARC and $\mathfrak{g}=\mathfrak{g}^{0}$.
\end{theorem}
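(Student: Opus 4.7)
The plan is to compute all derivations on the unique two-dimensional nonabelian solvable Lie algebra and then isolate a monotonicity-type invariant along the trajectories of the system. Fix a basis $X,Y$ of $\mathfrak{g}$ with $[X,Y]=Y$, so that $\mathfrak{n} = \mathrm{span}(Y)$. Since $\mathcal{D}\mathfrak{g}\subset\mathfrak{n}$ for any derivation of a solvable Lie algebra (the fact used in the proof of Lemma \ref{nilradical}), one will have $\mathcal{D}X = \beta Y$ and $\mathcal{D}Y = \gamma Y$, so the eigenvalues of $\mathcal{D}$ are $0$ and $\gamma$. Consequently $\mathfrak{g} = \mathfrak{g}^{0}$ iff $\gamma = 0$, and the theorem reduces to proving that controllability is equivalent to LARC together with $\gamma = 0$.

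The sufficiency will be immediate: if $\gamma = 0$ and LARC holds, then since $\dim\Delta = 1 = \dim\mathfrak{g}-1$, Remark \ref{importantremark} identifies LARC with the ad-rank condition, so $\mathcal{A}$ is open, and Theorem \ref{meuteorema} yields controllability because every eigenvalue of $\mathcal{D}$ has zero real part.

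For the necessity, LARC itself is always necessary (Theorem 3.1 of \cite{Jurd2}), so the real work is to exclude $\gamma\neq 0$. I would realize $G$ as $\mathbb{R}\ltimes\mathbb{R}$ with product $(u_{1},b_{1})(u_{2},b_{2}) = (u_{1}+u_{2},\, b_{1}+e^{u_{1}}b_{2})$. A direct calculation then gives
\[
\varphi_{t}(u,b) = \bigl(u,\; e^{\gamma t}b + (\beta/\gamma)(e^{\gamma t}-1)(e^{u}-1)\bigr), \qquad \mathcal{X}(u,b) = \bigl(\gamma b + \beta(e^{u}-1)\bigr)\partial_{b}.
\]
Normalizing a basis $Z = X + cY$ of $\Delta$ (LARC will force the $X$-component nonzero and $A := c+\beta/\gamma \neq 0$), I would introduce $F(u,b):=b + (\beta/\gamma)(e^{u}-1)$, vanishing on $G^{0}$; along any trajectory a short computation gives $\dot F = \gamma F + A\,v\,e^{u}$, and integration by parts using $ve^{u} = \tfrac{d}{dt}e^{u}$ yields
\[
F(T) = A\Bigl[e^{u(T)} - e^{\gamma T} + \gamma\!\int_{0}^{T} e^{\gamma(T-s)}e^{u(s)}\,ds\Bigr].
\]
The strict positivity $e^{u(s)}>0$ will then force $F(T)/A > e^{u(T)} - e^{\gamma T}$ when $\gamma>0$, and the reverse inequality when $\gamma<0$.

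The final step is to transport this one-sided bound through the identity $\varphi_{T}(h) = \phi(T,e,v)^{-1}$ characterizing $\mathcal{A}_{T}^{*}$. A short computation from the explicit formula for $\varphi_{T}$ will show that a point $h=(u_{h},b_{h})\in\mathcal{A}_{T}^{*}$ corresponding to $\phi(T,e,v)=(u_{*},b_{*})$ satisfies $u_{h}=-u_{*}$ and $F_{h} = -e^{u_{h}-\gamma T}F_{*}$; substituting the previous inequality for $F_{*}/A$ and passing to the limit $T\to\infty$ (so $e^{-\gamma T}\to 0$) yields, for $\gamma>0$, the strict bound
\[
\mathcal{A}^{*}\subset\{(u,b)\in G:\, b < ce^{u} + \beta/\gamma\} \quad \text{(if $A>0$; opposite half-space if $A<0$),}
\]
so $\mathcal{A}^{*}\neq G$ and controllability fails. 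For $\gamma<0$ the same computation will produce a proper half-space containing $\mathcal{A}$ instead of $\mathcal{A}^{*}$, with the same contradiction. The hardest part will be isolating the correct invariant $F$ and extracting a strict inequality from the integration by parts; once this is in place, verifying the four sign combinations of $(\gamma,A)$ is routine bookkeeping.
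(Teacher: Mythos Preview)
The paper does not prove this theorem; it is quoted from Dath and Jouan \cite{DaJo} (Theorem 3 there) and used as a black box, so there is no in-paper proof to compare against.

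Your argument is correct. The sufficiency is exactly the intended use of Remark \ref{importantremark} and Theorem \ref{meuteorema}. For the necessity you produce a direct obstruction: with $\gamma\neq 0$ and $Z=X+cY$ generating $\Delta$, the LARC forces $A=c+\beta/\gamma\neq 0$, and the invariant $F(u,b)=b+(\beta/\gamma)(e^{u}-1)$ satisfies $\dot F=\gamma F+A\,\tfrac{d}{dt}e^{u}$ along trajectories from the identity, whence the integration-by-parts formula
\[
F(T)/A=e^{u(T)}-e^{\gamma T}+\gamma\int_{0}^{T}e^{\gamma(T-s)}e^{u(s)}\,ds
\]
gives a strict one-sided bound because the integrand is positive. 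Transporting this through $\varphi_{T}(h)=\phi(T,e,v)^{-1}$ indeed yields $F_{h}=-e^{u_{h}-\gamma T}F_{\ast}$, and the resulting inequality $F_{h}/A<e^{u_{h}}-e^{-\gamma T}<e^{u_{h}}$ (for $\gamma>0$) confines $\mathcal{A}^{\ast}$ to the proper half-space $\{b<ce^{u}+\beta/\gamma\}$; the case $\gamma<0$ is symmetric with $\mathcal{A}$ in place of $\mathcal{A}^{\ast}$. One small point worth making explicit in the write-up: the connected two-dimensional nonabelian Lie group has trivial center, so it is automatically simply connected and your coordinate model $\mathbb{R}\ltimes\mathbb{R}$ covers the general case without further argument.
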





\section{Three-dimensional solvable Lie groups}

This section is devoted to analyze the main ingredients of nonnilpotent,
solvable three-dimensional Lie groups and its corresponding Lie algebras such
as derivations, linear vector fields, invariant vector fields and so on.

Following Chapter 7 of \cite{Oni}, any real three-dimensional nonnilpotent
solvable Lie algebra is isomorphic to one (and only one) of the following Lie algebras:

\begin{itemize}
\item[(i)] the semi-direct product $\mathfrak{r}_{2}=\mathbb{R}\times_{\theta
}\mathbb{R}^{2}$ where $\theta=\left(
\begin{array}
[c]{cc}%
0 & 0\\
0 & 1
\end{array}
\right) ;$

\item[(ii)] the semi-direct product $\mathfrak{r}_{3}=\mathbb{R}\times
_{\theta}\mathbb{R}^{2}$ where $\theta=\left(
\begin{array}
[c]{cc}%
1 & 1\\
0 & 1
\end{array}
\right) ;$

\item[(iii)] the semi-direct product $\mathfrak{r}_{3, \lambda}=\mathbb{R}%
\times_{\theta}\mathbb{R}^{2}$ where $(\lambda \in \mathbb{R}, 0<|\lambda|\leq1)
$ and $\theta=\left(
\begin{array}
[c]{cc}%
1 & 0\\
0 & \lambda
\end{array}
\right) ;$

\item[(iv)] the semi-direct product $\mathfrak{r}^{\prime}_{3, \lambda
}=\mathbb{R}\times_{\theta}\mathbb{R}^{2}$ where $(\lambda \in \mathbb{R},
\lambda \neq0)$ and $\theta=\left(
\begin{array}
[c]{cc}%
\lambda & -1\\
1 & \lambda
\end{array}
\right) ;$

\item[(v)] the semi-direct product $\mathfrak{e}=\mathbb{R}\times_{\theta
}\mathbb{R}^{2}$ where $\theta=\left(
\begin{array}
[c]{cc}%
0 & -1\\
1 & 0
\end{array}
\right) .$
\end{itemize}

The simply connected Lie groups $R_{3}$, $R_{3, \lambda}$, $R^{\prime}_{3,
\lambda}, \widetilde{E}$ and $\widetilde{R_{2}}$ with Lie algebras
$\mathfrak{r}_{3}$, $\mathfrak{r}_{3, \lambda}$, $\mathfrak{r}^{\prime}_{3,
\lambda}, \mathfrak{e}$ and $\mathfrak{r}_{2}$, respectively, are given as the
semi-direct product $\mathbb{R}\times_{\rho}\mathbb{R}^{2}$, where $\rho
_{t}=\mathrm{e}^{t\theta}$.

Associated with $\mathfrak{e}$ we have also the groups $E_{n}:=\widetilde
{E}/D_{n}$ where $D_{n}:=\{(2nk\pi, 0), \;k\in \mathbb{Z}\}$, $n\in \mathbb{N}$.
The group $E_{1}$ is the group of proper motions of $\mathbb{R}^{2}$
(connected component of the whole group of motions of $\mathbb{R}^{2}$) and
$E_{n}$ its $n$-fold covering. Also, if we denote by $v_{k}=(2k\pi,
0)\in \mathbb{R}^{2}$ and consider the discrete central subgroup of $R_{2}$
given by $D=\{(0, v_{k}), k\in \mathbb{Z}\}$ we have the connected Lie group
$R_{2}=\widetilde{R_{2}}/D$.

In the above cases, the canonical projections are given by
\[
\pi_{n}:\widetilde{E}\rightarrow E_{n}, \; \; \pi_{n}(t, v)=(\mathrm{e}%
^{i\frac{t}{n}}, v)\; \; \mbox{ and }\; \; \pi:\widetilde{R_{2}}\rightarrow R_{2},
\; \pi(t, (x, y))=(t, (\mathrm{e}^{ix}, y))
\]
and consequently
\begin{equation}
\label{proj}(d\pi_{n})_{(t, v)}(a, w)=\left( \frac{a}{n}, w\right)
\; \; \mbox{ and }\pi_{*}=\operatorname{id}.
\end{equation}

Moreover, if $G$ is a three-dimensional nonnilpotent, solvable, connected Lie
group and $\widetilde{G}$ its simply connected covering it holds that:

\begin{itemize}
\item[(a)] If $\widetilde{G}=\widetilde{R_{2}}$ then $G=\widetilde{G}$ or
$G=R_{2}$;

\item[(b)] If $\widetilde{G}=R_{3}, R_{3, \lambda}$ or $R^{\prime}_{3,
\lambda}$ then $G=\widetilde{G}$;

\item[(c)] If $\widetilde{G}=\widetilde{E}$ then $G=E_{n}$ for some
$n\in \mathbb{N}$;
\end{itemize}

With exception of the Lie group $\widetilde{E}$, all the three-dimensional
nonnilpotent solvable Lie groups are exponential.

\begin{remark}
We denote by $\mathfrak{aff}(\mathbb{R})$ the only two-dimensional solvable
Lie algebra. The associated connected Lie group is $\mathrm{Aff}%
_{0}(\mathbb{R})$, the connected component of the affine transformations in
$\mathbb{R}$. For the Lie algebra $\mathfrak{r}_{2}$ it holds that
$\mathfrak{r}_{2}=\mathbb{R}\times \mathfrak{aff}(\mathbb{R})$ and consequently
$\widetilde{R_{2}}=\mathbb{R}\times \mathrm{Aff}_{0}(\mathbb{R})$ and
$R_{2}=\mathbb{T}\times \mathrm{Aff}_{0}(\mathbb{R})$.
\end{remark}

In what follows, we analyze the main properties of the above groups. Since we
did not find the next results anywhere we present here their proofs in order
to make the paper self-contained.

For any $s\in \mathbb{R}$ let us define $\Lambda_{s}$ by
\[
\Lambda_{s}:=\left \{
\begin{array}
[c]{cc}%
(\rho_{s}-1)\theta^{-1} & \mbox{ if }\det \theta \neq0\\
\left(
\begin{array}
[c]{cc}%
s & 0\\
0 & \mathrm{e}^{s}-1
\end{array}
\right)  & \mbox{ if }\det \theta=0
\end{array}
\right. .
\]
A simple calculation shows that for any $t, s\in \mathbb{R}$ it holds that
\[
\Lambda_{0}=0, \; \; \; \; \frac{d}{ds}\Lambda_{s}=\rho_{s}, \; \; \; \; \rho
_{s}-\theta \Lambda_{s}=1, \; \; \; \; \rho_{s}\Lambda_{t}=\Lambda_{t}\rho
_{s}\; \; \mbox{ and }\; \; \Lambda_{t}+\rho_{t}\Lambda_{s}=\Lambda_{t+s}.
\]
The above map will be extensively used in the next results.

\begin{proposition}
\label{translations} If $G=\mathbb{R}\times_{\rho}\mathbb{R}^{2}$ then
\[
(dL_{(\tau_{1}, v_{1})})_{(\tau_{2}, v_{2})}(s, w)=(s, \rho_{\tau_{1}%
}w)\; \; \mbox{
and }\; \;(dR_{(\tau_{1}, v_{1})})_{(\tau_{2}, v_{2})}(s, w)=(s, w+s\theta
\rho_{\tau_{2}}v_{1})
\]
and consequently
\[
\exp(s, w)=\left \{
\begin{array}
[c]{cc}%
(0, w) & \mbox{ if }a=0\\
\left( s, \frac{1}{s}\Lambda_{s}w\right) , & \mbox{ if }s\neq0
\end{array}
\right.
\]

\end{proposition}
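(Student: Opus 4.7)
The plan is to start from the explicit formula for the group operation and then compute everything by direct differentiation and integration.

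First I would write down the product law on $G=\R\times_\rho \R^2$ inherited from the semi-direct structure, namely
\[
(\tau_1,v_1)\cdot(\tau_2,v_2)=(\tau_1+\tau_2,\,v_1+\rho_{\tau_1}v_2).
\]
From this, $L_{(\tau_1,v_1)}(\tau_2,v_2)=(\tau_1+\tau_2,\,v_1+\rho_{\tau_1}v_2)$ depends on $(\tau_2,v_2)$ only through the second summand of each component, so differentiating at $(\tau_2,v_2)$ along a tangent vector $(s,w)$ immediately gives $(s,\rho_{\tau_1}w)$. For $R_{(\tau_1,v_1)}(\tau_2,v_2)=(\tau_1+\tau_2,\,v_2+\rho_{\tau_2}v_1)$ the only nontrivial part is differentiating $\rho_{\tau_2}v_1$ in the direction $s$: using $\frac{d}{dt}\rho_t=\theta\rho_t=\rho_t\theta$, the chain rule yields the extra term $s\theta\rho_{\tau_2}v_1$. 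Putting the pieces together delivers the stated formula for $dR_{(\tau_1,v_1)}$.

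For the exponential, I would characterize $\exp(s,w)$ as $\gamma(1)$, where $\gamma(t)=(\tau(t),v(t))$ is the one-parameter subgroup with $\gamma'(0)=(s,w)$. The left-invariance of the generating vector field, together with the formula for $dL$ just proved, gives the ODE
\[
\tau'(t)=s,\qquad v'(t)=\rho_{\tau(t)}w,\qquad \tau(0)=0,\;v(0)=0.
\]
Thus $\tau(t)=st$, and then $v(1)=\int_0^1 \rho_{su}w\,du$. If $s=0$ this is simply $w$, yielding $\exp(0,w)=(0,w)$. If $s\neq 0$, the change of variable $r=su$ gives $v(1)=\frac{1}{s}\bigl(\int_0^s\rho_r\,dr\bigr)w$.

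The last step is to identify this integral with $\Lambda_s$. When $\det\theta\neq 0$, the matrices $\theta$ and $\rho_r$ commute and $\int_0^s \rho_r\,dr=\theta^{-1}(\rho_s-I)=(\rho_s-I)\theta^{-1}=\Lambda_s$. When $\det\theta=0$, one is necessarily in case (i) with $\theta=\mathrm{diag}(0,1)$, so $\rho_r=\mathrm{diag}(1,e^r)$ and a componentwise integration recovers exactly the matrix given in the definition of $\Lambda_s$ in that case. Combining both cases gives $\exp(s,w)=(s,\tfrac{1}{s}\Lambda_s w)$ for $s\neq 0$, completing the proof. There is no real obstacle here; the only point to watch is keeping track of the commutation $\rho_r\theta=\theta\rho_r$ when rewriting $\theta^{-1}(\rho_s-I)$ as $\Lambda_s=(\rho_s-1)\theta^{-1}$ in the notation of the paper.
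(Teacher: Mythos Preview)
Your argument is correct and follows essentially the same route as the paper: both compute $dL$ (and $dR$) by differentiating the explicit product law, and both obtain the exponential from the ODE $\gamma'(t)=(dL_{\gamma(t)})_e(s,w)$, the only cosmetic difference being that the paper writes down the candidate curve $\zeta(t)=(ts,\tfrac{1}{s}\Lambda_{ts}w)$ and verifies it, while you integrate $v'(t)=\rho_{st}w$ directly. Your final case split on $\det\theta$ could be replaced by a single line using the identities $\Lambda_0=0$ and $\tfrac{d}{ds}\Lambda_s=\rho_s$ already recorded in the paper, which give $\int_0^s\rho_r\,dr=\Lambda_s$ immediately.
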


\begin{proof}
We show the expressions for the left translation since for the right
translation are analogous. The curve $\gamma(t)=(\tau_{2},v_{2})+t(s,w)\in G$
satisfies that $\gamma(0)=(\tau_{2},v_{2})$ and $\gamma^{\prime}(0)=(s,w)$ and
therefore
\[
(dL_{(\tau_{1},v_{1})})_{(\tau_{2},v_{2})}(s,w)=\frac{d}{dt}_{|t=0}%
L_{(\tau_{1},v_{1})}(\gamma(t))=(\tau_{1},v_{1})(\tau_{2}+ts,v_{2}+tw)=
\]%
\[
\frac{d}{dt}_{|t=0}(\tau_{1}+\tau_{2}+ts,v_{1}+\rho_{\tau_{1}}(v_{2}%
+tw))=(s,\rho_{\tau_{1}}w).
\]

To prove the assertion on the exponential, let us consider $(s,w)\in
\mathfrak{g}$ and define the curve
\[
\zeta(t):=\left \{
\begin{array}
[c]{cc}%
(0,tw) & \mbox{ if }s=0\\
\left(  ts,\frac{1}{s}\Lambda_{ts}w\right)  , & \mbox{ if }s\neq0
\end{array}
\right.  .
\]
Since in both cases $\zeta(0)=0$ and
\[
\zeta^{\prime}(t)=\left \{
\begin{array}
[c]{cc}%
\frac{d}{dt}(0,tw)=(0,w)=(dL_{\zeta(t)})_{(0,0)}(0,w) & \mbox{ if }s\neq0\\
\frac{d}{dt}\left(  ts,\frac{1}{s}\Lambda_{ts}w\right)  =(s,\rho
_{ts}w)=(dL_{\zeta(t)})_{(0,0)}(s,w), & \mbox{ if }s\neq0
\end{array}
\right.  .
\]
by unicity we obtain that $\zeta(t)=\exp t(s,w)$ concluding the proof.
\end{proof}

\begin{remark}
The above result and equation (\ref{proj}) imply that the right and left
invariant vector fields on any connected solvable nonnilpotent Lie group $G$
are given respectively by
\[
Y^{L}(\pi(t, v))=(a, \rho_{t} w)\; \; \mbox{ and }\; \; Y^{R}(\pi(t, v))=(a,
w+a\theta v)
\]
where $Y=(a, w)\in \mathfrak{g}$ and $\pi:\widetilde{G}\rightarrow G$ is the
canonical projection.
\end{remark}

Let $\mathcal{X}$ be a linear vector field on $G=\mathbb{R}\times_{\rho
}\mathbb{R}^{2}$ and denote by $\mathcal{D}$ its associated derivation. Since
$\mathcal{D}(\mathbb{R}\times_{\theta}\mathbb{R}^{2})\subset \mathbb{R}^{2}$ we
have a well defined linear map $\mathcal{D}^{*}:\mathbb{R}^{2}\rightarrow
\mathbb{R}^{2}$ satisfying $\mathcal{D}(0, v)=(0, \mathcal{D}^{*} v)$ for
$v\in \mathbb{R}^{2}$. The map $\mathcal{D}^{*}$ satisfies $\mathcal{D}%
^{*}\circ \theta=\theta \circ \mathcal{D}^{*}$. In fact, for any $v\in
\mathbb{R}^{2}$ it holds that
\[
(0, \mathcal{D}^{*}\theta v)=\mathcal{D}(0, \theta v)=\mathcal{D}[(1, 0), (0,
v)]=\underbrace{[\mathcal{D}(1, 0), (0, v)]}_{=0}+[(1, 0), \mathcal{D}(0,
v)]=(0, \theta \mathcal{D}^{*} v)
\]
and therefore $\mathcal{D}^{*}\circ \theta=\theta \circ \mathcal{D}^{*}$. As a
consequence, $\mathcal{D}^{*}\rho_{t}=\rho_{t}\mathcal{D}^{*}$ for any
$t\in \mathbb{R}$.

\begin{proposition}
\label{linear} Let $G$ be a three-dimensional nonnilpotent, solvable,
connected Lie group and denote by $\widetilde{G}$ its simply connected
covering. If $\mathcal{X}$ is a linear vector field on $G$ with associated
derivation $\mathcal{D}$ then
\[
\mathcal{X}(\pi(t, v))=\left( 0, \mathcal{D}^{*} v+\Lambda_{t}\xi \right) ,
\; \; \mbox{ where } (0, \xi)=\mathcal{D}(1, 0)
\]
and $\pi:\widetilde{G}\rightarrow G$ is the canonical projection.
\end{proposition}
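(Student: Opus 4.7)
The plan is to first compute $\widetilde{\mathcal{X}}$ on the simply connected cover $\widetilde{G}=\mathbb{R}\times_{\rho}\mathbb{R}^{2}$ and then descend to $G$ via $\pi$-relatedness, noting from (\ref{proj}) that $(d\pi)_{(t,v)}$ acts as the identity on the $\mathbb{R}^{2}$ factor, so the final formula transfers verbatim. As observed in the paragraph preceding the statement, $\mathcal{D}(\mathfrak{g})\subset\{0\}\times\mathbb{R}^{2}$, so writing $\mathcal{D}(1,0)=(0,\xi)$ gives $\mathcal{D}(s_{0},w_{0})=(0,s_{0}\xi+\mathcal{D}^{\ast}w_{0})$, and $\mathcal{D}^{\ast}$ commutes with $\theta$, hence with $\rho_{s}$ and $\Lambda_{s}$. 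A short induction yields $\mathcal{D}^{n}(s_{0},w_{0})=(0,\,s_{0}(\mathcal{D}^{\ast})^{n-1}\xi+(\mathcal{D}^{\ast})^{n}w_{0})$ for $n\geq 1$, and summing the exponential series produces
\[
e^{s\mathcal{D}}(s_{0},w_{0})=\bigl(s_{0},\; e^{s\mathcal{D}^{\ast}}w_{0}+s_{0}A(s)\xi\bigr),\qquad A(s):=\int_{0}^{s}e^{r\mathcal{D}^{\ast}}\,dr.
\]

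Next I would evaluate $\widetilde{\mathcal{X}}$ at the two special points $(0,v)$ and $(t,0)$ using $(0,v)=\exp(0,v)$, $(t,0)=\exp(t,0)$ and the relation $\varphi_{s}(\exp Y)=\exp(e^{s\mathcal{D}}Y)$. The first calculation is immediate: $\varphi_{s}(0,v)=(0,e^{s\mathcal{D}^{\ast}}v)$, so $\widetilde{\mathcal{X}}(0,v)=(0,\mathcal{D}^{\ast}v)$. For the second, Proposition \ref{translations} gives $\varphi_{s}(t,0)=\exp(t,tA(s)\xi)=(t,\Lambda_{t}A(s)\xi)$, and differentiating at $s=0$, using $A(0)=0$ and $A'(0)=I$, yields $\widetilde{\mathcal{X}}(t,0)=(0,\Lambda_{t}\xi)$.

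To assemble the general formula, I would decompose $(t,v)=(0,v)\cdot(t,0)$. Differentiating the homomorphism identity $\varphi_{s}(gh)=\varphi_{s}(g)\varphi_{s}(h)$ at $s=0$ produces the derivation-type rule
\[
\widetilde{\mathcal{X}}(gh)=(dR_{h})_{g}\,\widetilde{\mathcal{X}}(g)+(dL_{g})_{h}\,\widetilde{\mathcal{X}}(h).
\]
Applying the translation formulas of Proposition \ref{translations} with $g=(0,v)$, $h=(t,0)$, both of which act as the identity on the $\{0\}\times\mathbb{R}^{2}$ inputs (since $s=0$ in the right-translation formula and $\tau_{1}=0$ so $\rho_{\tau_{1}}=I$ in the left-translation formula), collapses the right-hand side to $(0,\mathcal{D}^{\ast}v+\Lambda_{t}\xi)$ on $\widetilde{G}$. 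Transferring to $G$ via $\pi$-relatedness and (\ref{proj}) finishes the proof.

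The one delicate point is the computation of $\widetilde{\mathcal{X}}(t,0)$: one has to observe that $e^{s\mathcal{D}}(t,0)=(t,tA(s)\xi)$ retains first coordinate $t$, so the explicit exponential formula of Proposition \ref{translations} applies uniformly and produces precisely the factor $\Lambda_{t}$ demanded by the statement. Everything else reduces to bookkeeping with the structural identities for $\Lambda_{t}$ recorded just before Proposition \ref{translations}.
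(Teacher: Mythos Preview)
Your proof is correct and follows essentially the same strategy as the paper: compute $\widetilde{\mathcal{X}}$ on the two factors via $\varphi_{s}(\exp Y)=\exp(e^{s\mathcal{D}}Y)$, combine them with the product rule $\mathcal{X}(gh)=(dR_{h})_{g}\mathcal{X}(g)+(dL_{g})_{h}\mathcal{X}(h)$ and the translation formulas of Proposition~\ref{translations}, and then push down by $\pi$. The only cosmetic difference is your factorization $(t,v)=(0,v)(t,0)$ in place of the paper's $(t,v)=(t,0)(0,\rho_{-t}v)$, which is slightly cleaner since both translation differentials act trivially on your inputs and no appeal to $\mathcal{D}^{\ast}\rho_{t}=\rho_{t}\mathcal{D}^{\ast}$ is needed at that step.
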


\begin{proof}
Let us first consider the case where $G=\mathbb{R}\times_{\rho}\mathbb{R}^{2}%
$. Since
\[
\mathcal{X}(t,v)=\mathcal{X}((t,0)(0,\rho_{-t}v))=(dL_{(t,0)})_{(0,\rho
_{-t}v)}\mathcal{X}(0,\rho_{-t}v)+(dR_{(0,\rho_{-t}v)})_{(t,0)}\mathcal{X}%
(t,0)
\]
it is enough to compute the values of $\mathcal{X}(t,0)$ and $\mathcal{X}%
(0,\rho_{-t}v)$. Moreover, the fact that $(t,0)=\exp(t,0)$ and $(0,w)=\exp
(0,w)$ implies that
\[
\varphi_{s}(0,\rho_{-t}v)=\varphi_{s}(\exp(0,w))=\exp \left(  \mathrm{e}%
^{s\mathcal{D}}(0,\rho_{-t}v)\right)  =\exp(0,\mathrm{e}^{s\mathcal{D}^{\ast}%
}\rho_{-t}v)=(0,\mathrm{e}^{s\mathcal{D}^{\ast}}\rho_{-t}v)
\]
and
\[
\varphi_{s}(t,0)=\varphi_{s}(\exp(t,0))=\exp \left(  \mathrm{e}^{s\mathcal{D}%
}(t,0)\right)  =\exp t\left(  1,\sum_{j\geq0}\frac{s^{j+1}}{(j+1)!}%
(\mathcal{D}^{\ast})^{j}\xi \right)  =\left(  t,\sum_{j\geq0}\frac{s^{j+1}%
}{(j+1)!}\Lambda_{t}\left(  (\mathcal{D}^{\ast})^{j}\xi \right)  \right)  .
\]
Therefore,
\[
\mathcal{X}(0,\rho_{-t}v)=\frac{d}{ds}_{|s=0}\varphi_{s}(0,\rho_{-t}%
v)=(0,\mathcal{D}^{\ast}\rho_{-t}v),\; \; \mbox{ and }\; \; \mathcal{X}%
(t,0)=\frac{d}{ds}_{|s=0}\varphi_{s}(t,0)=(0,\Lambda_{t}\xi).
\]
By using the formulas in Proposition \ref{translations} we get that
\[
\mathcal{X}(t,v)=\left(  0,\mathcal{D}^{\ast}v+\Lambda_{t}\xi \right)
\]
proving the assertion for the simply connected case.

If $G$ is not simply connected, we can consider the linear vector field
$\widetilde{\mathcal{X}}$ on $\widetilde{G}$ that is $\pi$-related to
$\mathcal{X}$. Since $\mathcal{X}$ and $\widetilde{\mathcal{X}}$ have the same
associated derivation $\mathcal{D}$ we have by equation (\ref{proj}) and the
above calculations that
\[
\mathcal{X}(\pi(t, v))=(d\pi)_{(t, v)}\widetilde{\mathcal{X}}(t, v)=(0,
\mathcal{D}^{*} v+\Lambda_{t}\xi)
\]
concluding the proof.
\end{proof}

\begin{remark}
With the notation of the above proposition, it holds that the flow associated
to $\mathcal{X}$ on $\widetilde{G}$ is given by
\begin{equation}
\varphi_{s}(t,v)=(t,\mathrm{e}^{s\mathcal{D}^{\ast}}v+F_{s}\Lambda_{t}%
\xi),\; \; \mbox{ where }\; \;F_{s}=\sum_{j\geq1}\frac{s^{j}(\mathcal{D}^{\ast
})^{j-1}}{j!}.\label{linearflow}%
\end{equation}
In fact, since $\mathcal{D}^{\ast}F_{s}=F_{s}\mathcal{D}^{\ast}=\mathrm{e}%
^{s\mathcal{D}^{\ast}}-1$ and $F_{s}^{\prime}=\mathrm{e}^{s\mathcal{D}^{\ast}%
}$ we get that
\[
\frac{d}{ds}\varphi_{s}(t,v)=\left(  0,\mathcal{D}^{\ast}\mathrm{e}%
^{s\mathcal{D}^{\ast}}v+\mathrm{e}^{s\mathcal{D}^{\ast}}\Lambda_{t}\xi \right)
=\left(  0,\mathcal{D}^{\ast}\Bigl(\underbrace{\mathrm{e}^{s\mathcal{D}^{\ast
}}v+F_{s}\Lambda_{t}\xi}_{=\varphi_{s}(t,v)}\Bigr)+\Lambda_{t}\xi \right)
=\mathcal{X}(\varphi_{s}(t,v)).
\]
In particular, if $\mathcal{D}^{\ast}$ is invertible we obtain $F_{s}%
=(\mathrm{e}^{s\mathcal{D}^{\ast}}-1)(\mathcal{D}^{\ast})^{-1}$.
\end{remark}

The next technical lemma will be useful in the proof of the main results.

\begin{lemma}
\label{isomorphism} Let $G$ be a three-dimensional solvable nonnilpotent
connected Lie group. For any $v_{0}\in \mathbb{R}^{2}$ there exists $\psi
\in \mathrm{Aut}(G)$ satisfying $(d\psi)_{e}(1,v_{0})=(1,0).$
\end{lemma}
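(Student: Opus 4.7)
The plan is to exhibit $\psi$ explicitly on the simply connected cover $\widetilde{G}=\mathbb{R}\times_{\rho}\mathbb{R}^{2}$ and then descend to $G$ when necessary. The motivating observation at the Lie algebra level is that the map $(t,v)\mapsto(t,v-tv_{0})$ is an automorphism of $\mathfrak{g}=\mathbb{R}\times_{\theta}\mathbb{R}^{2}$ sending $(1,v_{0})$ to $(1,0)$: the identity on $\mathbb{R}^{2}$ commutes with $\theta$, so the bracket is preserved. Lifting this to the group via the cocycle $\Lambda_{t}$, I would define
\[
\widetilde{\psi}:\widetilde{G}\to\widetilde{G},\qquad \widetilde{\psi}(t,v):=(t,v-\Lambda_{t}v_{0}).
\]

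The verification that $\widetilde{\psi}$ is a group automorphism is a direct computation: expanding $\widetilde{\psi}((t_{1},v_{1})(t_{2},v_{2}))$ and $\widetilde{\psi}(t_{1},v_{1})\widetilde{\psi}(t_{2},v_{2})$ and comparing, everything reduces to the already recorded identity $\Lambda_{t_{1}+t_{2}}=\Lambda_{t_{1}}+\rho_{t_{1}}\Lambda_{t_{2}}$; bijectivity is clear from the explicit inverse $(t,v)\mapsto(t,v+\Lambda_{t}v_{0})$. For the differential at $e$, I would use the linear curve $u\mapsto(us,uw)$ together with $\tfrac{d}{dr}\Lambda_{r}=\rho_{r}$ and $\rho_{0}=I$, obtaining $(d\widetilde{\psi})_{e}(s,w)=(s,w-sv_{0})$; evaluated at $(s,w)=(1,v_{0})$ this is precisely $(1,0)$.

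It remains to descend $\widetilde{\psi}$ to $G$ when $G\neq\widetilde{G}$ by checking that $\widetilde{\psi}$ preserves $\ker\pi$. By the classification listed above only $G=R_{2}$ and $G=E_{n}$ arise. For $R_{2}$ the kernel is $\{(0,(2k\pi,0)):k\in\mathbb{Z}\}$ and $\Lambda_{0}=0$, so each element is fixed. For $E_{n}$ the kernel is $\{(2nk\pi,0):k\in\mathbb{Z}\}$; here $\theta$ is the standard rotation matrix, hence $\rho_{2nk\pi}=I$ and therefore $\Lambda_{2nk\pi}=(\rho_{2nk\pi}-I)\theta^{-1}=0$, so again each element is fixed and $\widetilde{\psi}$ descends to $\psi\in\mathrm{Aut}(G)$. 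The main point that requires thought is identifying the correct automorphism: the first natural attempt, conjugation by $(0,v)$, produces $\mathrm{Ad}((0,v))(1,v_{0})=(1,v_{0}-\theta v)$, which only reaches $v_{0}\in\theta(\mathbb{R}^{2})$ and so fails for $\mathfrak{r}_{2}$ where $\theta$ is singular; the outer automorphism above circumvents this obstruction uniformly.
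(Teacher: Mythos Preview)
Your proposal is correct and follows essentially the same approach as the paper: you define the same explicit automorphism $\psi(t,v)=(t,v-\Lambda_{t}v_{0})$ on the simply connected cover, verify the homomorphism property via the cocycle identity $\Lambda_{t_{1}+t_{2}}=\Lambda_{t_{1}}+\rho_{t_{1}}\Lambda_{t_{2}}$, compute the differential at the identity, and then descend through the discrete central subgroups $D_{n}$ and $D$. Your added motivation (the Lie-algebra-level automorphism and the remark that inner automorphisms fail when $\theta$ is singular) is not in the paper but is a helpful complement.
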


\begin{proof}
Let us first consider the simply connected case $\widetilde{G}=\mathbb{R}%
\times_{\rho}\mathbb{R}^{2}$. The map $\psi: \widetilde{G}\rightarrow
\widetilde{G}$ given by
\[
\psi(t, v)=(t, v- \Lambda_{t}v_{0})\; \; \mbox{ has inverse }\; \; \psi^{-1}(t,
v)=(t, v+ \Lambda_{t}v_{0})
\]
and satisfies
\[
\psi(t_{1}, v_{1})\psi(t_{2}, v_{2})=\Bigl(t_{1}, v_{1}-\Lambda_{t_{1}}%
v_{0}\Bigr)\Bigl(t_{2}, v_{2}-\Lambda_{t_{2}}v_{0}\Bigl)=\Bigl(t_{1}+t_{2},
v_{1}-\Lambda_{t_{1}}v_{0}+\rho_{t_{1}}\bigl(v_{2}-\Lambda_{t_{2}}%
v_{0}\bigr)\Bigl)
\]
\[
=\Bigl(t_{1}+t_{2}, v_{1}+\rho_{t_{1}}v_{2}-\Lambda_{t_{1}+t_{2}}%
v_{0}\Bigr)=\psi(t_{1}+t_{2}, v_{1}+\rho_{t_{1}}v_{2})=\psi \bigl((t_{1},
v_{1})(t_{2}, v_{2})\bigr)
\]
implying that $\psi \in \mathrm{Aut}(\widetilde{G})$. Moreover,
\[
(d\psi)_{\tilde{e}}(1, v_{0})=\frac{d}{ds}_{|s=0}\psi(s, sv_{0})=\frac{d}%
{ds}_{|s=0}(s, sv_{0}-\Lambda_{s}v_{0})=(1, v_{0}-\rho_{s}v_{0})|_{s=0}=(1,
0),
\]
proving the result for any $\widetilde{G}$ is simply connected.

If $G$ is not simply connected, one easily shows that the above automorphism
satisfies $\psi(D_{n})=D_{n}$, $n\in \mathbb{N}$ and $\psi(D)=D$, where $D_{n},
n\in \mathbb{N}$ and $D$ are the discrete central subgroups satisfying
$E_{n}=\widetilde{E}/D_{n}$ and $R_{2}=\widetilde{R_{2}}/D$, respectively.
Therefore, $\psi$ factors to an element in $\mathrm{Aut}(G)$ whose
differential coincides with $(d\psi)_{e}$, which proves the result.
\end{proof}

The next lemma states the main properties of derivations in the
three-dimensional Lie algebras under consideration.

\begin{lemma}
\label{derivationprop} Let $\mathfrak{g}=\mathbb{R}\times_{\theta}%
\mathbb{R}^{2}$ and let $\mathcal{D}:\mathfrak{g}\rightarrow \mathfrak{g}$ be a
derivation. It holds:

\begin{itemize}
\item[1.] $\dim \mathfrak{g}^{0}=1$ if and only if $\mathcal{D}^{*}$ is invertible;

\item[2.] $\mathbb{R}^{2}\subset \ker \mathcal{D}$ then $\mathcal{D}$ is nilpotent.

\item[3.] Any derivation on $\mathfrak{r}_{3}$ or on $\mathfrak{r}_{3,
\lambda} $ with $0<|\lambda|<1$ has only real eigenvalues. Therefore, if
$\mathcal{D}^{*}$ admits a pair of complex eigenvalues we must have
$\lambda=1$;

\item[4.] Any derivation $\mathcal{D}$ of $\mathfrak{r}^{\prime}_{3, \lambda}$
or of $\mathfrak{e}$ satisfies $\mathcal{D}^{*}=\left(
\begin{array}
[c]{cc}%
\alpha & -\beta \\
\beta & \alpha
\end{array}
\right) .$
\end{itemize}
\end{lemma}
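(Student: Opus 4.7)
The argument begins by recording the matrix form of $\mathcal{D}$. Since $\mathfrak{g}$ is solvable with nilradical $\{0\}\times\mathbb{R}^{2}$, $\mathcal{D}$ maps into $\mathbb{R}^{2}$; combined with the relation $\mathcal{D}^{*}\theta=\theta\mathcal{D}^{*}$ established just before the lemma, this gives in the basis $\{(1,0),(0,e_{1}),(0,e_{2})\}$ the block decomposition
\[
\mathcal{D}=\left(\begin{array}{cc}0 & 0\\ \xi & \mathcal{D}^{*}\end{array}\right),\qquad (0,\xi):=\mathcal{D}(1,0),
\]
with characteristic polynomial $p_{\mathcal{D}}(\lambda)=\lambda\cdot p_{\mathcal{D}^{*}}(\lambda)$. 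Every item can be read off this data together with the centralizer structure of $\theta$.

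For item 1, if $\mathcal{D}^{*}$ is singular, zero is an eigenvalue of $\mathcal{D}^{*}$, so the algebraic multiplicity of zero in $p_{\mathcal{D}}$ is at least $2$ and $\dim\mathfrak{g}^{0}\geq 2$. Conversely, if $\mathcal{D}^{*}$ is invertible, a direct check shows that $(1,-(\mathcal{D}^{*})^{-1}\xi)$ spans $\ker\mathcal{D}$ and no $(0,w)$ with $w\neq 0$ can lie in $\mathfrak{g}_{0}$ (since $\mathcal{D}^{n}(0,w)=(0,(\mathcal{D}^{*})^{n}w)$ is never zero), so $\dim\mathfrak{g}_{0}=1$. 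Item 2 is immediate: $\mathbb{R}^{2}\subset\ker\mathcal{D}$ gives $\mathcal{D}^{*}=0$, hence $\mathcal{D}^{2}(1,0)=\mathcal{D}(0,\xi)=(0,\mathcal{D}^{*}\xi)=0$ and $\mathcal{D}^{2}\equiv 0$, which is even stronger than nilpotency.

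For items 3 and 4 the task reduces to describing the centralizer of $\theta$ in $\mathfrak{gl}(2,\mathbb{R})$, to which $\mathcal{D}^{*}$ must belong. If $\theta$ is the unit Jordan block (the $\mathfrak{r}_{3}$ case), then $Z(\theta)=\{aI+b(\theta-I):a,b\in\mathbb{R}\}$ consists of upper triangular matrices with repeated diagonal entry $a$, so $\mathcal{D}^{*}$ has only the real eigenvalue $a$. If $\theta=\mathrm{diag}(1,\lambda)$ with $\lambda\neq 1$ (in particular $0<|\lambda|<1$), then $\theta$ has two distinct real eigenvalues, hence $Z(\theta)$ consists of matrices diagonal in the eigenbasis of $\theta$ and $\mathcal{D}^{*}$ again has only real eigenvalues; this yields item 3. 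For item 4 one writes $\theta=\lambda I+J$ with $J=\left(\begin{array}{cc}0 & -1\\ 1 & 0\end{array}\right)$, so $Z(\theta)=Z(J)$; a short direct computation identifies $Z(J)=\{\alpha I+\beta J:\alpha,\beta\in\mathbb{R}\}$, which is precisely the form claimed.

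The only mildly delicate point is the converse direction of item 1, where one must rule out hidden generalized eigenvectors of $\mathcal{D}$ for the eigenvalue zero that do not come from the first column; the remaining items are essentially linear algebra, driven by the commutation relation $\mathcal{D}^{*}\theta=\theta\mathcal{D}^{*}$.
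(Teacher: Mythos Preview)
Your proof is correct and follows essentially the same route as the paper: item~2 is argued identically, and items~3--4 are obtained (exactly as in the paper) from the commutation $\mathcal{D}^{*}\theta=\theta\mathcal{D}^{*}$ by computing the centralizer of $\theta$ in $\mathfrak{gl}(2,\mathbb{R})$. The only visible difference is in item~1, where the paper derives the implication $\dim\mathfrak{g}^{0}=1\Rightarrow\mathcal{D}^{*}$ invertible from Lemma~\ref{nilradical} (so that $\mathfrak{g}^{+}\oplus\mathfrak{g}^{-}\subset\mathbb{R}^{2}$) rather than from your factorisation $p_{\mathcal{D}}(t)=t\,p_{\mathcal{D}^{*}}(t)$; this is a cosmetic variation rather than a different idea.
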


\begin{proof}
1. Since $\mathfrak{g}=\mathfrak{g}^{+}\oplus \mathfrak{g}^{0}\oplus
\mathfrak{g}^{-} $, if $\dim \mathfrak{g}^{0}=1$ we have that $\dim
(\mathfrak{g}^{+}\oplus \mathfrak{g}^{-})=2$. By Lemma \ref{nilradical} it
holds that $\mathfrak{g}^{+}\oplus \mathfrak{g}^{-}\subset \mathbb{R}^{2}$ and
consequently $\mathbb{R}^{2}=\mathfrak{g}^{+}\oplus \mathfrak{g}^{-}$ implying
that $\mathcal{D}^{*}$ is invertible. Reciprocally, since $\mathcal{D}$
invertible implies $\mathfrak{g}$ nilpotent we must have necessarily that
$\dim \mathfrak{g}^{0}\geq1$. The fact that the eigenvalues of $\mathcal{D}%
^{*}$ are also eigenvalues of $\mathcal{D}$ implies then that $\dim
\mathfrak{g}^{0}=1$ if $\mathcal{D}^{*}$ is invertible.

2. In fact, if $\mathbb{R}^{2}\subset \ker \mathcal{D}$ then necessarily
$\mathcal{D}^{*}\equiv0$ and consequently $\mathcal{D}^{2}(\mathfrak{g}%
)\subset \mathcal{D}^{*}\mathbb{R}^{2}=\{0\}$ showing that $\mathcal{D}$ is nilpotent.

3. Since all the nonzero eigenvalues of $\mathcal{D}$ are also eigenvalues of
$\mathcal{D}^{*}$ and $\mathcal{D}^{*}$ commutes with $\theta$ it holds that
\[
\mathcal{D}^{*}=\left(
\begin{array}
[c]{cc}%
\alpha & \beta \\
0 & \alpha
\end{array}
\right) \; \; \; \mbox{ and }\; \; \; \mathcal{D}^{*}=\left(
\begin{array}
[c]{cc}%
\alpha & 0\\
0 & \beta
\end{array}
\right)
\]
when $\mathcal{D}$ is a derivation of $\mathfrak{r}_{3}$ and $\mathfrak{r}_{3,
\lambda}$, $|\lambda|\in(0, 1)$, respectively.

4. It follows directly from item 3.

5. It follows from the fact that $\mathcal{D}^{*}$ and $\theta$ commutes.
\end{proof}

We have also the following.

\begin{lemma}
\label{subalgebra} The only two-dimensional Lie subalgebra of $\mathfrak{e}$
or $\mathfrak{r}^{\prime}_{3, \lambda}$ is the nilradical $\mathbb{R}^{2}$.
\end{lemma}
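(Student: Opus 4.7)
The plan is to argue by contradiction. Suppose $\mathfrak{h}\subset \mathfrak{g}$ is a two-dimensional subalgebra with $\mathfrak{h}\neq \mathbb{R}^{2}$, where $\mathfrak{g}$ is either $\mathfrak{e}$ or $\mathfrak{r}'_{3,\lambda}$. Since $\mathbb{R}^{2}$ has codimension one in $\mathfrak{g}$, the intersection $\mathfrak{h}\cap \mathbb{R}^{2}$ has dimension exactly one; let $w'\in \mathbb{R}^{2}\setminus\{0\}$ span it. Complementing $w'$ inside $\mathfrak{h}$ by any element of the form $(1,v_{0})$ (such an element exists because $\mathfrak{h}\not\subset\mathbb{R}^{2}$), I get a basis $\{(1,v_{0}),(0,w')\}$ of $\mathfrak{h}$.

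Next, I compute the bracket in the semidirect product: using the general formula
\[
[(a_{1},v_{1}),(a_{2},v_{2})]=(0,\,a_{1}\theta v_{2}-a_{2}\theta v_{1}),
\]
I obtain $[(1,v_{0}),(0,w')]=(0,\theta w')$. For $\mathfrak{h}$ to be closed under brackets this vector must lie in $\mathfrak{h}\cap \mathbb{R}^{2}=\mathbb{R}\cdot(0,w')$, so $\theta w'=\mu w'$ for some real $\mu$. Hence $w'$ is a real eigenvector of $\theta$.

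Finally, I inspect $\theta$ in each case. For $\mathfrak{e}$ the matrix $\theta=\left(\begin{smallmatrix} 0 & -1 \\ 1 & 0 \end{smallmatrix}\right)$ has eigenvalues $\pm i$; for $\mathfrak{r}'_{3,\lambda}$ the matrix $\theta=\left(\begin{smallmatrix} \lambda & -1 \\ 1 & \lambda \end{smallmatrix}\right)$ has eigenvalues $\lambda\pm i$. In both cases the eigenvalues are non-real, so $\theta$ admits no real eigenvector, contradicting the existence of $w'$. Therefore no such $\mathfrak{h}$ exists, and the only two-dimensional subalgebra is $\mathbb{R}^{2}$, which is the nilradical by Lemma \ref{nilradical} (or directly since $[\mathbb{R}^{2},\mathbb{R}^{2}]=0$ inside the semidirect product).

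There is no real obstacle here; the argument is a one-step bracket computation plus the observation about eigenvalues of $\theta$. The only thing to be slightly careful about is choosing a convenient basis of $\mathfrak{h}$ to reduce the bracket check to a single computation, which the reduction $(b,w)\mapsto(b,w)-b(1,v_{0})$ handles cleanly.
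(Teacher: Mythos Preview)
Your proof is correct and follows essentially the same approach as the paper: intersect the putative subalgebra with $\mathbb{R}^{2}$, take a basis of the form $\{(t,w'),(0,w)\}$, compute the bracket $(0,t\theta w)$, and conclude from the fact that $\theta$ has no real one-dimensional invariant subspace. The only cosmetic difference is that you frame it as a contradiction (assuming $\mathfrak{h}\neq\mathbb{R}^{2}$ so you can normalize the first coordinate to $1$), whereas the paper deduces $t=0$ directly.
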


\begin{proof}
In fact, if $\mathfrak{h}$ is a two-dimensional Lie subalgebra of
$\mathfrak{g}$, where $\mathfrak{g}\in \{ \mathfrak{e},\mathfrak{r}_{3,\lambda
}^{\prime}\}$ then necessarily $\dim(\mathfrak{h}\cap \mathbb{R}^{2})\geq1$ and
consequently $(0,w)\in \mathfrak{h}$. If $(t,w^{\prime})\in \mathfrak{h}$ is
such that $\mathfrak{h}=\mathrm{span}\{(t,w^{\prime}),(0,w)\}$ then
\[
\lbrack(t,w^{\prime}),(0,w)]=(0,t\theta w)\in \mathfrak{h}%
\]
and hence $t\theta w=t^{\prime}w$. Since $\theta$ does not admits any
invariant one-dimensional subspace we must have that $t=0$ and consequently
$\mathfrak{h}=\mathbb{R}^{2}$ as desired.
\end{proof}

Concerning linear systems on nonnilpotent, solvable three-dimensional Lie
groups, the next result states that we can concentrate our studies to two
specific kind of systems.

\begin{proposition}
\label{conjugation} Any linear control system $\Sigma(\mathcal{X}, \Delta)$ on
a three-dimensional, solvable, connected, nonnilpotent Lie group $G$ that
satisfies the LARC is equivalent to one of the following linear systems:
\[
\dot{g}=\mathcal{X}(g)+uY^{1}(g)\; \; \; \mbox{ or }\; \; \; \dot{g}=\mathcal{X}%
(g)+u_{1}Y^{1}(g)+u_{2}Y^{2}(g),
\]
where $Y_{1}=(1, 0)$ and $Y_{2}=(0, w)$, for some $w\in \mathbb{R}^{2}$.
\end{proposition}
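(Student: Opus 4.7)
The plan is to exhibit an automorphism $\psi \in \mathrm{Aut}(G)$ so that $\Delta_{\psi} := (d\psi)_{e}\Delta$ has the required normal form. Because $\psi_{\ast}$ of a linear vector field is again linear, the conjugated system $\Sigma(\mathcal{X}_{\psi}, \Delta_{\psi})$ is a linear control system equivalent to the original, and because the reachable and controllable sets depend only on a basis of $\Delta$ (as recalled in Section 2.3), it suffices to display such a basis in the desired form.

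The first observation I would establish is that $\Delta$ cannot be contained in $\mathbb{R}^{2}$. In each of the five algebras from Section 3 the subspace $\mathbb{R}^{2}$ is an abelian ideal and in fact the nilradical; since $\mathcal{D}$ is a derivation of a solvable Lie algebra it maps $\mathfrak{g}$ into the nilradical, so in particular $\mathcal{D}(\mathbb{R}^{2}) \subset \mathbb{R}^{2}$. Thus $\mathbb{R}^{2}$ is a $\mathcal{D}$-invariant subalgebra, and if $\Delta \subset \mathbb{R}^{2}$ then the smallest $\mathcal{D}$-invariant subalgebra containing $\Delta$ would sit inside $\mathbb{R}^{2}$, contradicting the LARC. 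Hence $\Delta$ contains a vector $(t, v)$ with $t \neq 0$, and after rescaling we may assume $(1, v) \in \Delta$. Lemma \ref{isomorphism} then provides $\psi \in \mathrm{Aut}(G)$ with $(d\psi)_{e}(1, v) = (1, 0)$, and since any Lie algebra automorphism preserves the nilradical, $(d\psi)_{e}$ restricts to a linear isomorphism of $\mathbb{R}^{2}$.

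If $\dim\Delta = 1$ then $\Delta_{\psi} = \mathbb{R}(1, 0)$ and the single-input form is reached. If $\dim\Delta = 2$, a dimension count using $\Delta + \mathbb{R}^{2} \subset \mathfrak{g}$ together with $\Delta \not\subset \mathbb{R}^{2}$ forces $\dim(\Delta \cap \mathbb{R}^{2}) = 1$; choosing $(0, w_{0})$ to span this intersection yields a basis $\{(1, v), (0, w_{0})\}$ of $\Delta$, and applying $\psi$ maps this basis to $\{(1, 0), (0, w)\}$ with $w := (d\psi)_{e} w_{0} \in \mathbb{R}^{2}$, which is the two-input form.

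Overall the argument is a normal-form reduction and no step is a genuine obstacle: the nontrivial ingredients are Lemma \ref{isomorphism} (already proved) and the identification of $\mathbb{R}^{2}$ as the nilradical in all five models, which is immediate from the matrices $\theta$ listed at the start of Section 3. The only point requiring a bit of care is to verify that conjugation by $\psi$ is the right notion of ``equivalent'' in the statement, and that one really is allowed to replace the given control vectors $Y^{1},\ldots,Y^{m}$ by any basis of $\Delta$ — both of which were already noted in Section 2.3.
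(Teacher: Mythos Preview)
Your proof is correct and follows essentially the same route as the paper: both use the LARC to force $\Delta\not\subset\mathbb{R}^{2}$, extract a vector $(1,v_{0})\in\Delta$, and then apply the automorphism of Lemma~\ref{isomorphism} to send it to $(1,0)$. Your write-up is in fact more detailed than the paper's (you justify $\Delta\not\subset\mathbb{R}^{2}$ via $\mathcal{D}$-invariance of the nilradical and make the dimension count for $\Delta\cap\mathbb{R}^{2}$ explicit), but the argument is the same.
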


\begin{proof}
We only have to analyze the cases where $\dim \mathfrak{h}=1 \mbox{ or }2$. For
both cases, the fact that $\Sigma(\mathcal{X}, \Delta)$ satisfies the LARC
implies that $\Delta \not \subset \mathbb{R}^{2}$ and hence:

\begin{itemize}
\item[1.] If $\dim \Delta=1$ then $\Delta=\mathrm{span}\{(1, v_{0})\}$ for some
$v_{0}\in \mathbb{R}^{2}$.

\item[2.] If $\dim \Delta=2$ then $\Delta=\mathrm{span}\{(1, v_{0}), (0, w)\}$
for some $v_{0}, w\in \mathbb{R}^{2}$.
\end{itemize}

By considering the isomorphism $\psi$ given by Proposition \ref{isomorphism}
we get that $\Sigma(\mathcal{X},\Delta)$ is equivalent to the system
$\Sigma(\mathcal{X}_{\psi},\Delta_{\psi})$ that has necessarily the form
\[
\dot{g}=\mathcal{X}(g)+uY^{1}(g)\; \; \; \mbox{ or }\; \; \; \dot{g}=\mathcal{X}%
(g)+u_{1}Y^{1}(g)+u_{2}Y^{2}(g),
\]
for $Y_{1}=(1,0)$ and $Y_{2}=(0,w)$, for some $w\in \mathbb{R}^{2},$ concluding
the proof.
\end{proof}

\subsection{Homogeneous space}

In this section we analyze homogeneous space of the three-dimensional
nonnilpotent solvable Lie groups which will be used in the sections ahead. Our
particular interest are the projections of linear and invariant vector fields
to these homogenous spaces.

\subsubsection*{$\bullet$ $G=\mathbb{R}\times_{\rho}\mathbb{R}^{2}\;
\mbox{ and }\; \mathcal{D}^{*}$ is invertible}

For this case, we will consider the group of the singularities of
$\mathcal{X}$ given by $F=\{(t,v)\in G;\; \mathcal{X}(t,v)=0\}$. Since
$\mathcal{D}^{\ast}$ is invertible, it holds that $F$ is a one-dimensional
closed Lie subgroup of $G$. Moreover, by Proposition \ref{linear} it holds
that
\[
(t,v)\in F\; \; \iff \; \; \mathcal{D}^{\ast}v=-\Lambda_{t}\xi,\; \; \mbox{
where }\;(0,\xi)=\mathcal{D}(1,0).
\]
Hence,
\[
F(t_{1},v_{1})=F(t_{2},v_{2})\; \; \iff \; \; \rho_{-t_{1}}\mathcal{D}^{\ast}%
v_{1}-\Lambda_{-t_{1}}\xi=\rho_{-t_{2}}\mathcal{D}^{\ast}v_{2}-\Lambda
_{-t_{2}}\xi.
\]
Consequently, we can identify the homogeneous space $F\setminus G$ with
$\mathbb{R}^{2}$ by using the map
\[
F\cdot(t,v)\in F\setminus G\mapsto \rho_{-t}\mathcal{D}^{\ast}v-\Lambda_{-t}%
\xi \in \mathbb{R}^{2}.
\]
Under this identification, the projection $\pi:G\rightarrow F\setminus G$ is
given by
\[
\pi(t,v)=\rho_{-t}\mathcal{D}^{\ast}v-\Lambda_{-t}\xi \; \; \mbox{ and its
differential by}\; \;(d\pi)_{(t,v)}(a,w)=-a\rho_{-t}(\mathcal{D}^{\ast}\theta
v-\xi)+\rho_{-t}\mathcal{D}^{\ast}w.
\]
Therefore, we have that
\begin{equation}
(d\pi)_{(t,v)}\mathcal{X}(t,v)=\mathcal{D}^{\ast}\pi
(t,v)\; \; \mbox{ and }\; \;(d\pi)_{(t,v)}Y^{L}(t,v)=-a(\theta \pi(t,v)-\xi
)+\mathcal{D}^{\ast}w\label{projection}%
\end{equation}
where $Y=(a,w)\in \mathfrak{g}$.

\begin{remark}
It is not hard to see that if we consider the above setup on $E_{n}%
,\,n\in \mathbb{N},$ both, the homogeneous space $F\setminus G$ and the
projection $\pi:G\rightarrow F\setminus G$ have the same expression.
\end{remark}

\subsubsection*{$\bullet$ $G=\mathbb{R}\times_{\rho}\mathbb{R}^{2}\;
\mbox{ and }\; \mathcal{D}^{*}$ is identically zero}

Let $w_{0}\in \mathbb{R}^{2}$ and consider the one-parameter subgroup of
$(0,w_{0})$ given by
\[
H_{w_{0}}=\{ \exp s(0,w_{0}),\;s\in \mathbb{R}\}=\{(0,sw_{0}),\; \;s\in
\mathbb{R}\}.
\]
It holds that
\[
H_{w_{0}}(t_{1},v_{1})=H_{w_{0}}(t_{2},v_{2})\; \; \iff \; \;t_{1}=t_{2}%
\; \; \mbox{ and }v_{2}-v_{1}\in \mathrm{span}\{w_{0}\}.
\]
If we consider $v_{0}\in \mathbb{R}^{2}$ such that $\langle w_{0},v_{0}%
\rangle=0$ then $v_{2}-v_{1}\in \mathrm{span}\{w_{0}\} \iff \langle v_{1}%
,v_{0}\rangle=\langle v_{2},v_{0}\rangle$ and consequently we can identify the
homogeneous space $H_{w_{0}}\setminus G$ with $\mathbb{R}^{2}$ using the map
\[
H_{w_{0}}\cdot(t,v)\in H_{w_{0}}\setminus G\mapsto(t,\langle v,v_{0}%
\rangle)\in \mathbb{R}^{2}.
\]
Under this identification, the projection $\pi:G\rightarrow H_{w_{0}}\setminus
G$ is given by
\[
\pi(t,v)=(t,\langle v,v_{0}\rangle
)\; \; \mbox{ and since it is linear }\; \;(d\pi)_{(t,v)}=\pi.
\]
We obtain,
\begin{equation}
(d\pi)_{(t,v)}\mathcal{X}(t,v)=(0,\langle \Lambda_{t}\xi,v_{0}\rangle
)\; \; \mbox{ and }\; \;(d\pi)_{(t,v)}Y^{L}(t,v)=(a,\langle \rho_{t}w,v_{0}%
\rangle),\label{projection2}%
\end{equation}
where $Y=(a,w)\in \mathfrak{g}$.

\subsubsection*{$\bullet$ $G=R_{2}$ and $\mathcal{D}^{*}$ is identically zero}

Let $w_{0}\in \mathbb{R}^{2}$ and assume that $w_{0}=(\alpha,\beta)$ with
$\alpha,\beta \in \mathbb{R}^{\ast}$. The one-parameter subgroup of $(0,w_{0})$
is given by
\[
H_{w_{0}}=\{ \exp s(0,w_{0}),\;s\in \mathbb{R}\}=\{(0,(\mathrm{e}^{is\alpha
},s\beta)),\; \;s\in \mathbb{R}\}.
\]
Then
\[
H_{w_{0}}(t_{1},v_{1})=H_{w_{0}}(t_{2},v_{2})\; \; \iff \; \;t_{1}=t_{2}%
\; \;,\mathrm{e}^{i(x_{1}+s\alpha)}=\mathrm{e}^{ix_{2}}%
\; \; \mbox{ and }\; \;y_{1}+s\beta=y_{2}%
\]%
\[
\iff t_{1}=t_{2}\; \; \mbox{ and }\; \; \mathrm{e}^{i(x_{1}-\frac{\alpha}{\beta
}y_{1})}=\mathrm{e}^{i(x_{2}-\frac{\alpha}{\beta}y_{2})}%
\]
If we consider $v_{0}\in \mathbb{R}^{2}$ such that $\langle w_{0},v_{0}%
\rangle=0$ then $v_{0}=(\beta,-\alpha).$ Consequently, we can identify the
homogeneous space $H_{w_{0}}\setminus G$ with $\mathbb{R}\times \mathbb{T}$
using the map
\[
H_{w_{0}}\cdot(t,v)\in H_{w_{0}}\setminus G\mapsto \left(  t,\mathrm{e}%
^{i\beta^{-1}\langle v,v_{0}\rangle}\right)  \in \mathbb{R}\times \mathbb{T}.
\]
Under this identification, the projection $\pi:G\rightarrow H_{w_{0}}\setminus
G$ is given by
\[
\pi(t,v)=\left(  t,\mathrm{e}^{i\beta^{-1}\langle v,v_{0}\rangle}\right)
\; \; \mbox{ and its differential by }\; \;(d\pi)_{(t,v)}(a,w)=(a,\beta
^{-1}\langle v,v_{0}\rangle).
\]
In particular, we get
\begin{equation}
(d\pi)_{(t,v)}\mathcal{X}(t,v)=(0,\beta^{-1}\langle \Lambda_{t}\xi,v_{0}%
\rangle)\; \; \mbox{ and }\; \;(d\pi)_{(t,v)}Y^{L}(t,v)=(a,\beta^{-1}\langle
\rho_{t}w,v_{0}\rangle),\label{projection3}%
\end{equation}
where $Y=(a,w)\in \mathfrak{g}$.

\section{Controllability}

In this section we analyze the controllability property of linear control
systems on three-dimensional nonnilpotent solvable Lie groups. Since the LARC
is a necessary condition for controllability our work is reduced to the
analysis of linear systems on $\Sigma(\mathcal{X},\Delta)$, where $\dim
\Delta=1$ or $2$.

\subsection{The one-dimensional case}

In this section we analyze the case where $\dim \Delta=1$. In this context, the
next theorem summarizes the controllability of linear control systems on the
different classes of three-dimensional nonnilpotent, solvable Lie groups. Its
proof will be divided in several propositions.

\begin{theorem}
Let $\Sigma(\mathcal{X},\Delta)$ be a linear system on a three-dimensional
nonnilpotent solvable Lie group $G$ that satisfies the LARC and $\dim \Delta
=1$. It holds:

\begin{itemize}
\item[1.] If $G=R_{2}$: $\Sigma(\mathcal{X}, \Delta)$ is controllable if and
only if $\mathfrak{g}^{0}\simeq \mathfrak{aff}(\mathbb{R})$ or $\mathfrak{g}%
=\mathfrak{g}^{0}$;

\item[2.] If $G=\widetilde{R_{2}}$: $\Sigma(\mathcal{X}, \Delta)$ is
controllable if and only if $\mathfrak{g}^{0}=\mathfrak{aff}(\mathbb{R})$;

\item[3.] If $G=E_{n}, \widetilde{E}$ or $R_{3}$: $\Sigma(\mathcal{X},
\Delta)$ is controllable if and only if $\mathfrak{g}=\mathfrak{g}^{0}$ and
$\mathcal{D}^{*}\not \equiv 0$;

\item[4.] If $G=R_{3, \lambda}$: $\Sigma(\mathcal{X}, \Delta)$ is controllable
if and only if $\lambda=1$ and $\mathcal{D}^{*}$ has a pair of complex eigenvalues

\item[5.] If $G=R^{\prime}_{3, \lambda}$: $\Sigma(\mathcal{X}, \Delta)$ is controllable.
\end{itemize}
\end{theorem}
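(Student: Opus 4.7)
The plan is to begin with the normalization from Proposition \ref{conjugation}, which reduces the system to $Y^1 = (1,0)$. Then the associated derivation has block form
\[
\mathcal{D} = \begin{pmatrix} 0 & 0 \\ \xi & \mathcal{D}^* \end{pmatrix},
\]
with $(0,\xi) = \mathcal{D}(1,0)$ and spectrum $\{0\} \cup \operatorname{spec}(\mathcal{D}^*)$. Because $\dim \Delta = 1$ has codimension two, the ad-rank condition is not automatic from LARC, and openness of $\mathcal{A}$ must be argued by hand in each case. I would treat the five groups in turn, using Lemma \ref{derivationprop} to pin down the admissible form of $\mathcal{D}^*$ and then splitting each case into necessity and sufficiency.

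For necessity I would systematically push $\Sigma$ down to the 2D homogeneous spaces from Section 3.1 and invoke Theorem \ref{DathJouan}. When $\mathcal{D}^*$ is invertible, the quotient $F \setminus G \simeq \mathbb{R}^2$ by the singularity subgroup carries by (\ref{projection}) a projected linear system whose derivation is $\mathcal{D}^*$, so Theorem \ref{DathJouan} forces all eigenvalues of $\mathcal{D}^*$ to be purely imaginary. When $\mathcal{D}^* \equiv 0$, the quotient by a well-chosen one-parameter subgroup $H_{w_0}$ gives a 2D projected system via (\ref{projection2})--(\ref{projection3}) whose obstruction to controllability further restricts the admissible data; the compact variant (\ref{projection3}) is what distinguishes the behavior on $R_2$ from that on $\widetilde{R_2}$. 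Additional negative results come from composing with surjections onto $\operatorname{Aff}_0(\mathbb{R})$ and invoking Proposition \ref{simplesmente}(1).

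For sufficiency in the regime $\mathfrak{g} = \mathfrak{g}^0$ I would invoke Theorem \ref{meuteorema}, so the remaining step is openness of $\mathcal{A}$. This follows from LARC using the bracket $[(1,0),(0,v)] = (0,\theta v)$ together with the forcing direction $\xi$ under the drift; Cases 3 and 4 require the side condition $\mathcal{D}^* \not\equiv 0$ precisely to avoid the nilpotent pathology of Lemma \ref{derivationprop}(2). The subtle alternative $\mathfrak{g}^0 \simeq \mathfrak{aff}(\mathbb{R})$ in Case 1 is handled by observing that the compactness of the $\mathbb{T}$-factor in $R_2 = \mathbb{T} \times \operatorname{Aff}_0(\mathbb{R})$ makes $\ker \pi$ (where $\pi$ denotes projection onto $\operatorname{Aff}_0(\mathbb{R})$) recurrent under the flow, so Proposition \ref{simplesmente}(2) lifts controllability from the known affine quotient; this recurrence is unavailable in $\widetilde{R_2}$ and explains the dichotomy between Cases 1 and 2.

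The main obstacle is Case 5, the unconditional controllability on $R'_{3,\lambda}$. By Lemma \ref{derivationprop}(4), $\mathcal{D}^*$ must take the rotational form $\bigl(\begin{smallmatrix}\alpha & -\beta \\ \beta & \alpha \end{smallmatrix}\bigr)$, but $\alpha$ need not vanish and $\mathfrak{g}^+$ or $\mathfrak{g}^-$ may be all of $\mathbb{R}^2$. The plan is to exploit the fact that $\rho_s = e^{s\lambda}R(s)$ always carries a nontrivial rotational component, so that the affine forcing $\Lambda_t \xi$ traces a logarithmic spiral in $\mathbb{R}^2$ as $t$ ranges over $\mathbb{R}$; by driving $t$ back and forth with $u$ one can design trajectories whose net effect in $v$ sweeps every direction, overcoming the absence of the $\mathfrak{g}^-$ component in $G^{-,0}$. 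Making this rotational compensation rigorous, using the explicit flow (\ref{linearflow}) and concatenation of controls to realize arbitrary endpoints in both $\mathcal{A}$ and $\mathcal{A}^*$, is the main technical difficulty and the place where the argument departs most sharply from the two-dimensional theory of \cite{DaJo}.
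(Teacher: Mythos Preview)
Your overall architecture is right, but there are two genuine gaps.

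\textbf{Necessity via $F\setminus G$ does not reduce to Theorem \ref{DathJouan}.} The projected system on $F\setminus G\simeq\mathbb{R}^2$ given by (\ref{projection}) is $\dot v=\mathcal{D}^{*}v-u(\theta v-\xi)$. This is an affine-bilinear system on $\mathbb{R}^2$, not a linear control system on the two-dimensional solvable group: the control vector $-u(\theta v-\xi)$ is not left-invariant on the abelian group $\mathbb{R}^2$, and $F\setminus G$ is not $\mathrm{Aff}_0(\mathbb{R})$. Theorem \ref{DathJouan} simply does not apply here. The paper instead proves noncontrollability of these projected systems by exhibiting explicit invariant regions: for $E_n,\widetilde{E}$ with $\alpha\neq 0$ one computes $(x-\xi_2)\dot x+(y+\xi_1)\dot y=\alpha K(x,y)$ and obtains a disk (or its exterior) that traps solutions; for $R_3,R_{3,\lambda}$ with $\mathcal{D}^{*}$ invertible one shows the horizontal line $y=\xi_2$ is a barrier. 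Theorem \ref{DathJouan} is used only when the quotient is genuinely $\mathrm{Aff}_0(\mathbb{R})$, for instance $G/\mathbb{T}$ in the $R_2$ analysis or $H_{w_0}\setminus G$ when $w_0\in\ker\mathcal{D}^{*}$ in the $R_{3,\lambda}$ analysis.

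\textbf{You have the $R_2$/$\widetilde{R_2}$ dichotomy backwards.} When $\mathfrak{g}^0\simeq\mathfrak{aff}(\mathbb{R})$ the system is controllable on \emph{both} $R_2$ and $\widetilde{R_2}$; this is not the subtle case and no compactness is needed. The actual dichotomy occurs when $\mathfrak{g}=\mathfrak{g}^0$: for $\mathfrak{r}_2$ this forces $\mathcal{D}^{*}\equiv 0$ (since $\mathcal{D}^{*}$ is diagonal with real eigenvalues), so the ad-rank condition fails and Theorem \ref{meuteorema} is unavailable. On $\widetilde{R_2}$ the paper shows noncontrollability by projecting to $H_{w_0}\setminus\widetilde{G}$ and finding $\dot z=1+t-\mathrm{e}^t\leq 0$; on $R_2$ it proves controllability by projecting to $H_{\xi}\setminus G\simeq\mathbb{R}\times\mathbb{T}$, constructing explicit trajectories that exploit the compact $\mathbb{T}$-factor, and separately showing $H_{\xi}\subset\overline{\mathcal{A}}\cap\overline{\mathcal{A}^*}$ via limits of $\varphi_t(s(1,0))$ as $s\to 0$. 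Your plan to invoke Theorem \ref{meuteorema} for ``the regime $\mathfrak{g}=\mathfrak{g}^0$'' cannot cover this case.

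Your description of Case 5 is in the right spirit; the paper splits it into three subcases ($\{\mathcal{D}^{*},\theta\}$ linearly independent, $\mathcal{D}^{*}=a\theta$, $\mathcal{D}^{*}\equiv 0$) and in the first uses a controllability result for bilinear systems from \cite{Elliot} together with \cite{Jurd1}, which you may want to anticipate.
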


In the sequel, we prove the above theorem analyzing case by case.

\subsubsection{The case $G=R_{2}$ or $G=\widetilde{R_{2}}$.}

\begin{proposition}
Let $\Sigma(\mathcal{X}, \Delta)$ be a linear control system on $G$. Then,
$\Sigma(\mathcal{X}, \Delta)$ is controllable if and only if it satisfies the
LARC and

\begin{itemize}
\item[(i)] $G=\widetilde{R_{2}}$ and $\mathfrak{g}^{0}\simeq \mathfrak{aff}%
(\mathbb{R})$;

\item[(ii)] $G=R_{2}$ and $\mathfrak{g}^{0}\simeq \mathfrak{aff}(\mathbb{R})$
or $\mathfrak{g}=\mathfrak{g}^{0}$;
\end{itemize}
\end{proposition}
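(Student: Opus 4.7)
Plan: Apply Proposition~\ref{conjugation} to reduce to the normalized setup $\Delta=\mathrm{span}\{(1,0)\}$. Lemma~\ref{derivationprop} forces $\mathcal{D}^{\ast}=\mathrm{diag}(a,b)$ in the standard basis of $\mathbb{R}^{2}$; write $\mathcal{D}(1,0)=(0,\xi)$ with $\xi=(\xi_{1},\xi_{2})$. A direct inspection of the matrix of $\mathcal{D}$ in the basis $X_{0}=(1,0)$, $X_{1}=(0,(1,0))$, $X_{2}=(0,(0,1))$ yields four regimes according to $(a,b)$: $\mathfrak{g}=\mathfrak{g}^{0}$ iff $a=b=0$; $\mathfrak{g}^{0}\simeq\mathfrak{aff}(\mathbb{R})$ iff $a\neq 0$ and $b=0$; $\mathfrak{g}^{0}\simeq\mathbb{R}^{2}$ (abelian) iff $a=0$ and $b\neq 0$; $\dim\mathfrak{g}^{0}=1$ iff $ab\neq 0$. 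Using $[X_{0},X_{1}]=0$ and $[X_{0},X_{2}]=X_{2}$, an iterated bracket/derivation computation shows LARC is equivalent to $\xi_{1}\xi_{2}\neq 0$.

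For sufficiency in the case $\mathfrak{g}^{0}\simeq\mathfrak{aff}(\mathbb{R})$ (covering both $\widetilde{R_{2}}$ and $R_{2}$), I will verify that $\{X_{0},\mathcal{D}X_{0},\mathcal{D}^{2}X_{0}\}$ spans $\mathfrak{g}$ (this reduces to $a\xi_{1}\xi_{2}\neq 0$), so the ad-rank condition holds and $\mathcal{A}$ is open by Remark~\ref{importantremark}. Theorem~\ref{meuteorema} then yields $G^{+,0}\subset\mathcal{A}$ and $G^{-,0}\subset\mathcal{A}^{\ast}$. By time-reversal I may assume $a>0$, in which case $G^{+,0}=G$ and $\mathcal{A}=G$. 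To obtain $\mathcal{A}^{\ast}=G$ I will use the identity $\phi(s,g,u)=\varphi_{s}(g)\phi(s,e,u)$: for any $g\in G^{+}=\exp(\mathbb{R}X_{1})$, pick $u$ steering $e$ to $\varphi_{s}(g)^{-1}$ (possible since $\mathcal{A}=G$), giving $g\in\mathcal{A}^{\ast}$, hence $G^{+}\subset\mathcal{A}^{\ast}$. Since $G^{0}\subset\mathcal{A}^{\ast}$ and $G^{0}$ is $\varphi$-invariant, Lemma~\ref{translation}(1) gives $G^{0}\mathcal{A}^{\ast}\subset\mathcal{A}^{\ast}$, and the solvable-group decomposition $G=G^{0}G^{+}$ finishes.

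The sufficiency in the remaining case, $G=R_{2}$ with $\mathfrak{g}=\mathfrak{g}^{0}$ (so $\mathcal{D}^{\ast}=0$), uses the canonical projection $\pi:R_{2}=\mathbb{T}\times\mathrm{Aff}_{0}(\mathbb{R})\to\mathrm{Aff}_{0}(\mathbb{R})$ onto the second factor. The induced derivation $\bar{\mathcal{D}}$ on $\mathfrak{aff}(\mathbb{R})$ satisfies $\bar{\mathcal{D}}\bar{X}_{0}=\xi_{2}\bar{X}_{2}$ and $\bar{\mathcal{D}}\bar{X}_{2}=0$, so $\bar{\mathfrak{g}}=\bar{\mathfrak{g}}^{0}$; the descended system satisfies LARC and is controllable by Theorem~\ref{DathJouan}. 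By Proposition~\ref{simplesmente}(2) it then suffices to show $\ker\pi=\mathbb{T}\subset\cl(\mathcal{A})\cap\cl(\mathcal{A}^{\ast})$. Since $\mathcal{X}$ vanishes on $\mathbb{T}$ (as $\mathcal{D}^{\ast}=0$ and $\Lambda_{0}=0$), Lemma~\ref{translation}(2) reduces this to $\mathbb{T}\subset\cl(\mathcal{A})$, which I will establish by an explicit two-triangle piecewise-constant control: using the flow integrals $v_{1}(s)=\xi_{1}\int_{0}^{s}t(\sigma)\,d\sigma$ and $v_{2}(s)=\xi_{2}\int_{0}^{s}(e^{t(\sigma)}-1)\,d\sigma$, a suitable choice of bumps yields $t(s)=0$ and $v_{2}(s)=0$ while $v_{1}(s)$ ranges over arbitrarily large (negative) values, recovering every element of $\mathbb{T}$ after reduction modulo $2\pi$.

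For necessity, LARC is always required. The regimes $\mathfrak{g}^{0}\simeq\mathbb{R}^{2}$ and $\dim\mathfrak{g}^{0}=1$ (both characterized by $b\neq 0$) are excluded on both groups by the same projection onto $\mathrm{Aff}_{0}(\mathbb{R})$: the induced $\bar{\mathcal{D}}$ has the nonzero eigenvalue $b$, so $\bar{\mathfrak{g}}\neq\bar{\mathfrak{g}}^{0}$; Theorem~\ref{DathJouan} forces the descended system to be uncontrollable, and Proposition~\ref{simplesmente}(1) lifts this obstruction to $\Sigma(\mathcal{X},\Delta)$. The delicate remaining case is $G=\widetilde{R_{2}}$ with $\mathfrak{g}=\mathfrak{g}^{0}$, where the projection does not separate. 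Here I will extract an obstruction directly from the flow formula~(\ref{linearflow}): Jensen's inequality applied to the convex function $e^{x}$ on any trajectory from $e$ with $t(s)=0$ yields $\xi_{2}^{-1}v_{2}(s)\geq s\bigl(e^{v_{1}(s)/(\xi_{1}s)}-1\bigr)$; specializing to $v_{2}(s)=0$ forces $\xi_{1}^{-1}v_{1}(s)\leq 0$, so points of the form $(0,(y,0))$ with $y/\xi_{1}>0$ cannot be reached from the identity on the simply connected $\widetilde{R_{2}}$, contradicting controllability. The main technical obstacle will be the explicit periodic-control construction establishing $\mathbb{T}\subset\cl(\mathcal{A})$ in the sufficiency step on $R_{2}$, where one must show that the extra freedom provided by the compact $\mathbb{T}$-factor is enough to overcome the very Jensen obstruction that rules out controllability on $\widetilde{R_{2}}$.
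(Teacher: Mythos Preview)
Your overall architecture is reasonable, and several pieces match the paper closely: the necessity cases with $b\neq 0$ via the projection to $\mathrm{Aff}_0(\mathbb{R})$ are exactly the paper's argument, and your Jensen obstruction on $\widetilde{R_2}$ with $\mathfrak{g}=\mathfrak{g}^0$ is the integrated form of the paper's pointwise inequality $\dot z=1+t-\mathrm{e}^t\le 0$ on the homogeneous space $\widetilde H_{w_0}\setminus\widetilde G$. There is, however, a genuine gap in your sufficiency argument for $\mathfrak{g}^0\simeq\mathfrak{aff}(\mathbb{R})$.

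After obtaining $\mathcal{A}=G$ from $G^{+,0}=G$, you write: ``for any $g\in G^{+}$, pick $u$ steering $e$ to $\varphi_{s}(g)^{-1}$ (possible since $\mathcal{A}=G$), giving $g\in\mathcal{A}^{\ast}$.'' This conflates two time parameters. The hypothesis $\mathcal{A}=G$ only says that for each target $h$ there exist \emph{some} time $\tau>0$ and control $u$ with $\phi(\tau,e,u)=h$; it does not let you prescribe $\tau$. Taking $h=\varphi_s(g)^{-1}$ you obtain $\phi(\tau,g,u)=\varphi_\tau(g)\,\varphi_s(g)^{-1}$, which equals $e$ only when $\tau=s$. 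You would need the exact-time statement $\mathcal{A}_\tau=G$, which does not follow from $\mathcal{A}=G$. The paper avoids this entirely by observing that $\mathfrak{g}^0$ is an \emph{ideal} here (since $[X_1,\mathfrak{g}^0]=0$), so one can pass to the quotient Lie group $\widetilde{R_2}/G^0\simeq\mathbb{R}$; the induced one-dimensional linear system is controllable by the ad-rank condition, whence $G=G^0\cdot\mathcal{A}=G^0\cdot\mathcal{A}^{\ast}$, and Lemma~\ref{translation}(1) together with $G^0\subset\mathcal{A}\cap\mathcal{A}^{\ast}$ finishes both inclusions symmetrically.

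Two smaller remarks. First, in the $R_2$, $\mathfrak{g}=\mathfrak{g}^0$ case your route (project onto the $\mathrm{Aff}_0(\mathbb{R})$ factor, then show $\mathbb{T}\subset\cl(\mathcal{A})\cap\cl(\mathcal{A}^{\ast})$) is genuinely different from the paper's, which works instead on the homogeneous space $H_{\xi}\setminus R_2\simeq\mathbb{R}\times\mathbb{T}$ and obtains $H_\xi\subset\cl(\mathcal{A})\cap\cl(\mathcal{A}^{\ast})$ from the limit $\varphi_{r/|s|}(s(1,0))\to(0,\pm r\xi)$ as $s\to 0^{\pm}$. Your approach is viable, but note that Lemma~\ref{translation}(2) as stated concerns $\mathcal{A}$ and $\mathcal{A}^{\ast}$, not their closures; to pass from $\mathbb{T}\subset\cl(\mathcal{A})$ to $\mathbb{T}\subset\cl(\mathcal{A}^{\ast})$ you should invoke the time-reversal symmetry of your explicit construction (the reversed system has $\xi$ replaced by $-\xi$ and the same argument applies) rather than that lemma. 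Second, your claim that LARC is equivalent to $\xi_1\xi_2\neq 0$ is correct and worth stating once, since it is used in every regime.
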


\begin{proof}
Let us start by proving the following facts:

a) \textit{If $\Sigma(\widetilde{\mathcal{X}},\Delta)$ is a linear control
system on $\widetilde{R_{2}}$ which satisfies the LARC and $\mathfrak{g}%
^{0}\simeq \mathfrak{aff}(\mathbb{R})$ then, it is controllable.}

In fact, if $\mathfrak{g}^{0}\simeq \mathfrak{aff}(\mathbb{R})$ then
necessarily $\mathfrak{g}=\mathbb{R}e_{1}\oplus \mathfrak{g}^{0}$ implying that
$\mathfrak{g}^{0}$ is an ideal of $\mathfrak{g}$. Hence, $\mathcal{D}%
e_{1}=\lambda e_{1}$ with $\lambda \in \mathbb{R}^{\ast}$ and $\mathcal{D}%
e_{2}=0$. Moreover, if $\Sigma(\widetilde{\mathcal{X}},\Delta)$ satisfies the
LARC and $(0,\xi)=\mathcal{D}(1,0)$ we must have $\xi=ae_{1}+be_{2}$ with
$a,b\in \mathbb{R}^{\ast}$. Thus, $\mathcal{D}^{2}(1,0)=(0,a\lambda e_{1})$
showing that $\{(1,0),\mathcal{D}(1,0),\mathcal{D}^{2}(1,0)\}$ is a basis for
$\mathfrak{g}$ and therefore that $\Sigma(\widetilde{\mathcal{X}},\Delta)$
satisfies the ad-rank condition.

Since $\mathfrak{g}^{0}$ is an ideal of $\mathfrak{g}$ we can consider the
induced linear system on $G/\widetilde{G}^{0}\simeq \mathbb{R}$ that is
controllable by the ad-rank condition (see Theorem 3 of \cite{Sontag}).
Moreover, the ad-rank condition implies also by Theorem \ref{meuteorema} that
$G^{0}\subset \mathcal{A}\cap \mathcal{A}^{*}$ which by Lemma \ref{translation}
gives us the controllability $\Sigma(\widetilde{\mathcal{X}}, \Delta)$.

b) \textit{If $\Sigma(\mathcal{X}, \Delta)$ is a linear system on $R_{2}$
which satisfies the LARC and $\mathfrak{g}^{0}\simeq \mathfrak{aff}%
(\mathbb{R})$ then $\Sigma$ cannot be controllable.}

In fact, for such system we have the following possibilities:

\begin{itemize}
\item $\dim \mathfrak{g}^{0}=1$: By Lemma \ref{derivationprop} the linear map
$\mathcal{D}^{*}$ is invertible and consequently, the induced system on
$G/\mathbb{T}\simeq \mathrm{Aff}_{0}(\mathbb{R})$ cannot be controllable by
Theorem \ref{DathJouan}.

\item $\dim \mathfrak{g}^{0}=2$: In this case $\mathfrak{g}^{0}$ is Abelian and
necessarily $e_{1}\in \mathfrak{g}^{0}$ since otherwise the whole Lie algebra
$\mathfrak{g}$ would be Abelian. Moreover, since $\mathcal{D}e_{3}=0$ would
imply $\mathfrak{g}=\mathfrak{g}^{0}$ we must have $\mathcal{D}e_{3}=\mu
e_{3}$ for some $\mu \in \mathbb{R}^{\ast}$. As in the previous item, we do not
have controllability of the induced system on $G/\mathbb{T}\simeq
\mathrm{Aff}_{0}(\mathbb{R})$.
\end{itemize}

Let us now consider $\pi$-related linear control systems $\Sigma
(\widetilde{\mathcal{X}}, \Delta)$ and $\Sigma(\mathcal{X}, \Delta)$ on
$\widetilde{R_{2}}$ and $R_{2}$ respectively, where $\pi:\widetilde{R_{2}%
}\rightarrow R_{2}$ is the canonical projection. Note that $\Sigma
(\widetilde{\mathcal{X}}, \Delta)$ satisfies the LARC if and only if
$\Sigma(\mathcal{X}, \Delta)$ also satisfies it.

(i) By item a) above, if $\mathfrak{g}^{0}\simeq \mathfrak{aff}(\mathbb{R})$
and $\Sigma(\widetilde{\mathcal{X}},\Delta)$ satisfies the LARC then it is
controllable. Reciprocally, if $\dim \mathfrak{g}^{0}=1$ or $\dim
\mathfrak{g}^{0}=2$ and $\mathfrak{g}^{0}$ is Abelian, $\Sigma(\mathcal{X}%
,\Delta)$ is not controllable by item b) and consequently $\Sigma
(\widetilde{\mathcal{X}},\Delta)$ cannot be controllable. The only remaining
possibility is $\mathfrak{g}=\mathfrak{g}^{0}$. Since in this case we
necessarily have $\mathcal{D}^{\ast}\equiv0$, for any $w_{0}\in \mathbb{R}%
^{2},$ we can consider the induced system on $\widetilde{H}_{w_{0}}%
\setminus \widetilde{G}$ for $w_{0}\in \mathbb{R}^{2}$ given by
\begin{equation}
\left \{
\begin{array}
[c]{l}%
\label{4}\dot{t}=u\\
\dot{z}=\langle \Lambda_{t}\xi,v_{0}\rangle
\end{array}
\right.  ,\; \; \mbox{ where }\; \; \langle w_{0},v_{0}\rangle=0.
\end{equation}
By assuming that $\Sigma(\widetilde{\mathcal{X}},\Delta)$ satisfies the LARC
it holds that $\xi=(\xi_{1},\xi_{2})$ with $\xi_{1},\xi_{2}\in \mathbb{R}%
^{\ast}$. By considering $w_{0}=(-\xi_{2}^{-1},\xi_{1}^{-1})$ we get that
\[
\dot{z}=\langle \Lambda_{t}\xi,v_{0}\rangle=1+t-\mathrm{e}^{t}\leq0
\]
implying that (\ref{4}) cannot be controllable since the region $\mathcal{C}%
=\{(z,t);\;z\leq0\}$ is invariant by its solutions. Consequently,
$\Sigma(\widetilde{\mathcal{X}},\Delta)$ cannot be controllable concluding the
proof of case $(i)$.

(ii) By item a) if $\mathfrak{g}^{0}\simeq \mathfrak{aff}_{0}(\mathbb{R})$ and
$\Sigma(\mathcal{X},\Delta)$ satisfies the LARC it follows that $\Sigma
(\widetilde{\mathcal{X}},\Delta)$ is controllable and consequently
$\Sigma(\mathcal{X},\Delta)$ is controllable. By item b) $\Sigma
(\mathcal{X},\Delta)$ cannot be controllable when $\dim \mathfrak{g}^{0}=1$ or
when $\dim \mathfrak{g}^{0}=2$ and $\mathfrak{g}^{0}$ is Abelian. Therefore, we
only have to show that $\mathfrak{g}=\mathfrak{g}^{0}$ together with the LARC
implies the controllability of $\Sigma(\mathcal{X},\Delta)$.

In this case, for any $w_{0}=(\alpha, \beta)\in \mathbb{R}^{2}$ with $\alpha,
\beta \in \mathbb{R}^{*}$ we have the induced system on $H_{w_{0}}\setminus
G\simeq \mathbb{R}\times \mathbb{T}$ given by
\begin{equation}
\left \{
\begin{array}
[c]{l}%
\label{8} \dot{t}=u\\
\dot{z}=\beta^{-1}\langle \Lambda_{t}\xi, v_{0}\rangle
\end{array}
\right. , \; \; \mbox{ where }\; \; \langle w_{0}, v_{0}\rangle=0.
\end{equation}

Since $\Sigma(\mathcal{X},\Delta)$ satisfies the LARC we must have that
$\mathcal{D}(1,0)=(0,\xi)$ with $\xi=(\xi_{1},\xi_{2})$ with $\xi_{1},\xi
_{2}\in \mathbb{R}^{\ast}$ and hence, by considering $w_{0}=\xi$ we get that
$v_{0}=(-\xi_{2},\xi)$ and
\[
\dot{z}=\xi_{2}^{-1}\langle \Lambda_{t}\xi,v_{0}\rangle=\xi_{1}(\mathrm{e}%
^{t}-t-1).
\]

For such system we have the following equalities
\[%
\begin{array}
[c]{ll}%
\pi_{1}\left(  \varphi(s,(t,z),u)\right)  =t+us, & \mbox{ if }u\equiv
\mbox{cte}\\
\varphi(s,(t,z),u)=(t,z\cdot \mathrm{e}^{is\xi_{1}(\mathrm{e}^{t}-t-1)}) &
\mbox{ if }u\equiv0
\end{array}
\]
where $\pi_{1}:\mathbb{R}\times \mathbb{T}\rightarrow \mathbb{R}$ is the
projection onto the first coordinate. For any given $q=(a,z)\in \mathbb{R}%
\times \mathbb{T}$ with $a\neq0$ we construct a trajectory from $(0,(1,0))$ to
$q$ as follows:

\begin{itemize}
\item[1.] We go from $(0, (1, 0))$ to a point $q^{\prime}=(a, z^{\prime})$ by
using a constant control function;

\item[2.] By ``switching off" the control we can go from $q^{\prime}=(a,
z^{\prime})$ to $q=(a, z)$, since $z^{\prime}\cdot \mathrm{e}^{is\xi
_{1}(\mathrm{e}^{a}-a-1)}$ covers the whole $\mathbb{T}$ if $a\neq0$.
\end{itemize}

\begin{figure}[h]
\begin{center}
\includegraphics[scale=.5]{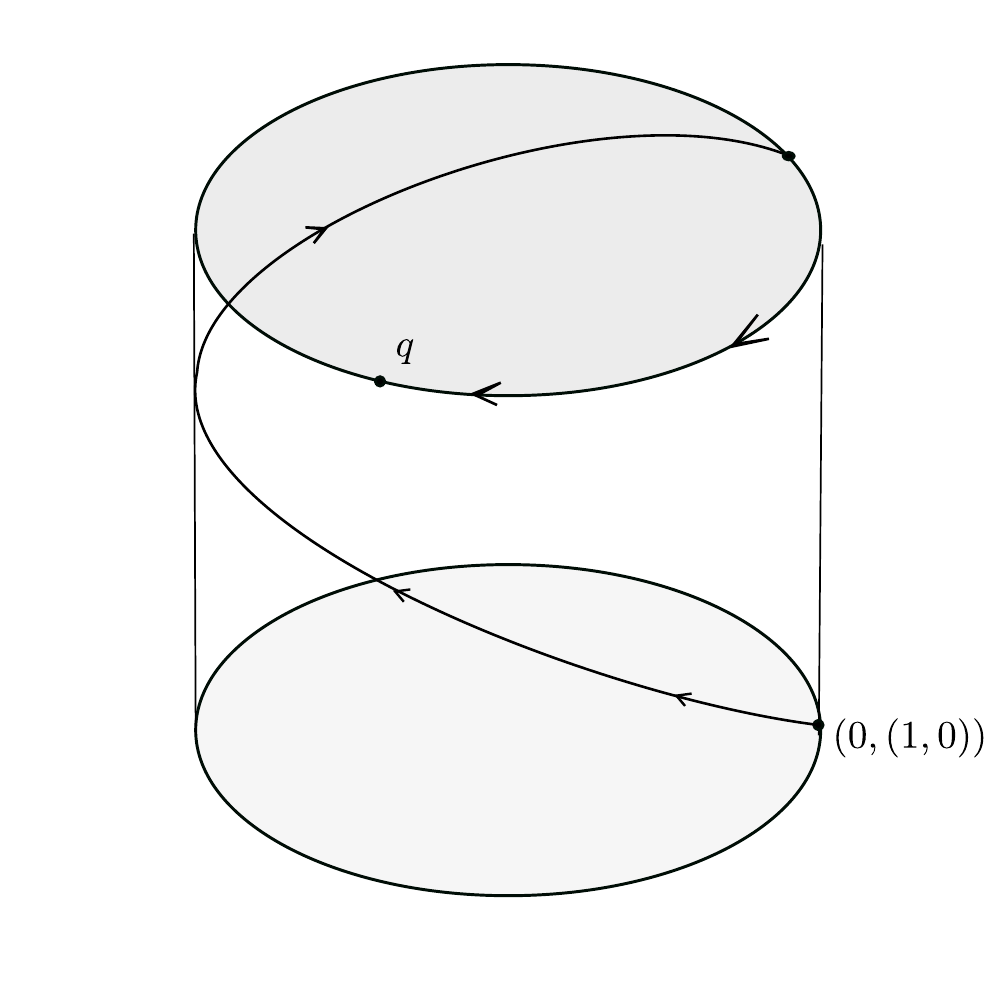}
\end{center}
\caption{Solutions connecting $(0, (1,0))$ to $q\in \mathbb{R}\times \mathbb{T}
$.}%
\label{fig4}%
\end{figure}

Since $\Sigma(\mathcal{X},\Delta)$ projects to the system (\ref{8}), the
projection of $\mathcal{A}$ and $\mathcal{A}^{\ast}$ are dense in $H_{\xi
}\setminus G$ and consequently $H_{\xi}\cdot \mathcal{A}$ and $H_{\xi}%
\cdot \mathcal{A}^{\ast}$ are dense in $G$.

On the other hand, for any $t, s\in \mathbb{R}$ it holds that
\[
\varphi_{t}(s(1, 0))=\varphi_{t}(\exp s(1, 0))=\exp s(\mathrm{e}^{\mathcal{D}%
}(1, 0))=\exp s(1, t \xi)=(s, t\Lambda_{s}\xi).
\]
In particular, for any $r>0$ we can consider $t=r/|s|$ and so $\left( s,
\frac{rs}{|s|}\frac{1}{s}\Lambda_{s}v_{0}\right) =\varphi_{t}(s(1,
0))\in \mathcal{A} $ for any $s\in \mathbb{R}$. By considering $s\rightarrow0$
from both sides we get that $(0, \pm r\xi)\in \overline{\mathcal{A}}$ and since
$r>0$ was arbitrary we conclude that $H_{\xi} \subset \overline{\mathcal{A}}$
which by Lemma \ref{translation} implies also that $H_{\xi} \subset
\overline{\mathcal{A}^{*}}$ and consequently
\[
G=\overline{H_{\xi}\cdot \mathcal{A}}\subset \overline{\mathcal{A}}\; \; \mbox{
and }G=\overline{H_{\xi}\cdot \mathcal{A}^{*}}\subset \overline{\mathcal{A}^{*}%
}.
\]
Since $\Sigma(\mathcal{X}, \Delta)$ satisfies the LARC $G=\overline
{\mathcal{A}}\cap \overline{\mathcal{A}^{*}}$ implies $G=\mathcal{A}%
\cap \mathcal{A}^{*}$ concluding the proof.
\end{proof}

\subsubsection{The case $G=E_{n}$ or $G=\widetilde{E_{2}}$.}

\begin{proposition}
A linear control system $\Sigma(\mathcal{X}, \Delta)$ on $G$ is controllable
if and only if it satisfies the LARC and $\mathcal{D}$ has a pair of purely
imaginary eigenvalues.
\end{proposition}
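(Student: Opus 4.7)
By Proposition~\ref{conjugation} I may assume $\Delta = \mathrm{span}\{(1,0)\}$, and by Lemma~\ref{derivationprop}(4) every derivation on $\mathfrak{e}$ has the form $\mathcal{D}^* = \alpha I + \beta \theta$, so the eigenvalues of $\mathcal{D}$ are $\{0, \alpha \pm i\beta\}$. The hypothesis ``a pair of purely imaginary eigenvalues'' is thus $\alpha = 0$, $\beta \neq 0$. Throughout I write $(0, \xi) := \mathcal{D}(1,0)$.

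\textbf{Sufficiency.} Assume LARC with $\alpha = 0$, $\beta \neq 0$. Computing $\mathcal{D}^k(1,0)$ for $k = 0,1,2$ shows that the LARC is equivalent to $\xi \neq 0$, and that in this situation $\{(1,0), (0, \xi), (0, \beta\theta\xi)\}$ is already a basis of $\mathfrak{g}$, so the ad-rank condition holds. Remark~\ref{importantremark} then makes $\mathcal{A}$ open, and $\alpha = 0$ gives $\mathfrak{g} = \mathfrak{g}^0$, so Theorem~\ref{meuteorema} yields controllability. The same statement holds uniformly on $\widetilde{E}$ and each $E_n$.

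\textbf{Necessity.} Assume the system is controllable; the LARC is automatic, and I must rule out (i) $\alpha \neq 0$, and (ii) $\alpha = \beta = 0$. For (i) the map $\mathcal{D}^*$ is invertible, so I pass to the projection $\pi: G \to F \setminus G \cong \mathbb{R}^2$ of Section~3, where by equation~(\ref{projection}) the system reads
\[
\dot z = \mathcal{D}^* z + u(\xi - \theta z).
\]
Put $z_0 := -\theta \xi$, so that $\xi - \theta z_0 = 0$, and let $y := z - z_0$. Using skew-symmetry of $\theta$ together with $\mathcal{D}^* = \alpha I + \beta \theta$, a direct calculation gives
\[
\dot V = 2\alpha \|y\|^2 + 2 \langle y, \mathcal{D}^* z_0 \rangle \quad \text{for} \quad V(y) := \|y\|^2,
\]
which is independent of the control $u$. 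If $\alpha > 0$, then $\dot V > 0$ for $\|y\|$ outside some disk of radius $R$, and once the trajectory has $V(y(t)) > R^2$ monotonicity of $V$ keeps it there; since $\pi(e) = -z_0$ lies strictly inside that disk (because $\|\mathcal{D}^* z_0\| = \sqrt{\alpha^2+\beta^2}\|z_0\| \geq \alpha \|z_0\|$), the initial point $y_0$ with $\|y_0\| > R$ cannot reach $\pi(e)$, so $\pi(\mathcal{A}^*)$ is bounded and $\mathcal{A}^* \neq G$. If $\alpha < 0$, a Gronwall-style comparison applied to $W := \sqrt V$ (the inequality $\dot W \leq \alpha W + \|\mathcal{D}^* z_0\|$ has stable equilibrium $W = \|\mathcal{D}^* z_0\|/|\alpha|$) bounds $V$ forward along every trajectory starting at $\pi(e)$, so $\pi(\mathcal{A})$ is bounded and $\mathcal{A} \neq G$. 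In case (ii), $\mathcal{D}^* \equiv 0$ and I project to $H_\xi \setminus G$ via equation~(\ref{projection2}) with $v_0 := \theta \xi$. Since $\theta^2 = -I$, one has $\Lambda_t = \sin t \cdot I + (1 - \cos t) \theta$, hence $\langle \Lambda_t \xi, \theta \xi \rangle = (1 - \cos t)\|\xi\|^2 \geq 0$. The projected $z$-coordinate is therefore monotone non-decreasing, confining $\pi(\mathcal{A})$ to a closed half-space and again precluding controllability.

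\textbf{Main obstacle.} The delicate step is the Lyapunov analysis in case (i): the $u$-term cancels in $\dot V$ only because $\theta$ is skew-symmetric, $\mathcal{D}^*$ commutes with $\theta$, and $z_0$ is chosen precisely as the unique point where the control vector field vanishes. Without this triple coincidence one would face a genuinely controlled reachability problem in $\mathbb{R}^2$, which would be substantially harder to obstruct. Identifying the right shift $z_0$ and recognising the resulting $u$-free Lyapunov inequality is the conceptual key.
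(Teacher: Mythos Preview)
Your argument is correct and coincides with the paper's: both derive sufficiency from the ad-rank condition together with Theorem~\ref{meuteorema}, and for necessity both project to $F\setminus G$ (when $\alpha\neq 0$) using the radial function centered at $-\theta\xi$---your Lyapunov identity $\dot V=2\alpha\|y\|^{2}+2\langle y,\mathcal{D}^{*}z_{0}\rangle$ is precisely the paper's computation $(x-\xi_{2})\dot x+(y+\xi_{1})\dot y=\alpha\,K(x,y)$---and to $H_{\xi}\setminus G$ (when $\mathcal{D}^{*}=0$) for the monotone coordinate $\langle\Lambda_{t}\xi,\theta\xi\rangle=(1-\cos t)|\xi|^{2}\geq 0$. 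The only imprecision is that your inequality $\|\mathcal{D}^{*}z_{0}\|\geq\alpha\|z_{0}\|$ places $\pi(e)$ on or inside the disk of radius $R$ rather than strictly inside (equality occurs when $\beta=0$), but this is harmless since trajectories with $\|y_{0}\|>R$ have $V$ strictly increasing and so cannot reach the closed disk.
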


\begin{proof}
If $\Sigma(\mathcal{X}, \Delta)$ satisfies the LARC then $\mathcal{D}(1,
0)=(0, \xi)\neq0$. If we also assume that $\mathcal{D}$ has a pair of purely
imaginary eigenvalues, then $\mathfrak{g}=\mathfrak{g}^{0}$ and $\{ \xi,
\mathcal{D}^{*} \xi \}$ is linearly independent implying that $\Sigma
(\mathcal{X}, \Delta)$ satisfies the ad-rank condition, which by Theorem
\ref{meuteorema} implies its controllability.

Reciprocally, let us then assume that $\mathcal{D}$ does not admit a pair of
purely imaginary eigenvalues. By Proposition \ref{derivationprop} the
eigenvalues of $\mathcal{D}^{\ast}$ are of the form $\alpha \pm i\beta$. If
$\Sigma(\mathcal{X},\Delta)$ satisfies the LARC then we can assume w.l.o.g.
that $(1,0)\in \Delta$ and we have the following possibilities:

$\bullet$ If $\alpha \neq0$ we have that $\mathcal{D}^{\ast}$ is invertible and
so, the induced system on the homogeneous space $F\setminus G$ is given by
$\dot{v}=\mathcal{D}^{\ast}v-u(\theta v-\xi)$ which in coordinates reads as
\begin{equation}
\left \{
\begin{array}
[c]{l}%
\dot{x}=\alpha x-\beta y+u(y+\xi_{1})\\
\dot{y}=\beta x+\alpha y-u(x-\xi_{2})
\end{array}
\right.  \; \; \; \mbox{ where }\; \; \; \xi=(\xi_{1},\xi_{2}).\label{1}%
\end{equation}
Using the fact that $\alpha \neq0$, a simple calculation shows that $(x-\xi
_{2})\dot{x}+(y+\xi_{1})\dot{y}=\alpha \,K(x,y)$ where
\[
K(x,y)=\left[  \left(  x+\frac{\beta \xi_{1}-\alpha \xi_{2}}{2\alpha}\right)
^{2}+\left(  y+\frac{\beta \xi_{2}+\alpha \xi_{1}}{2\alpha}\right)  ^{2}%
-\frac{(\beta^{2}+\alpha^{2})|\xi|^{2}}{4\alpha^{2}}\right]  .
\]
Therefore, if $\alpha>0$ the solutions of (\ref{1}) let the exterior
$\mathcal{C}$ of any circumference with center at $\xi$ and radius
$R>\frac{((\alpha+\beta)^{2}+2\alpha^{2})|\xi|^{2}}{2\alpha^{2}}$ invariant
(Figure \ref{fig2}). Analogously, if $\alpha<0$ the interior of any such
circumference is invariant by the solutions of (\ref{1}). Therefore, (\ref{1})
cannot be controllable and consequently $\Sigma(\mathcal{X},\Delta)$ cannot be controllable.

\begin{figure}[h]
\begin{center}
\includegraphics[scale=.8]{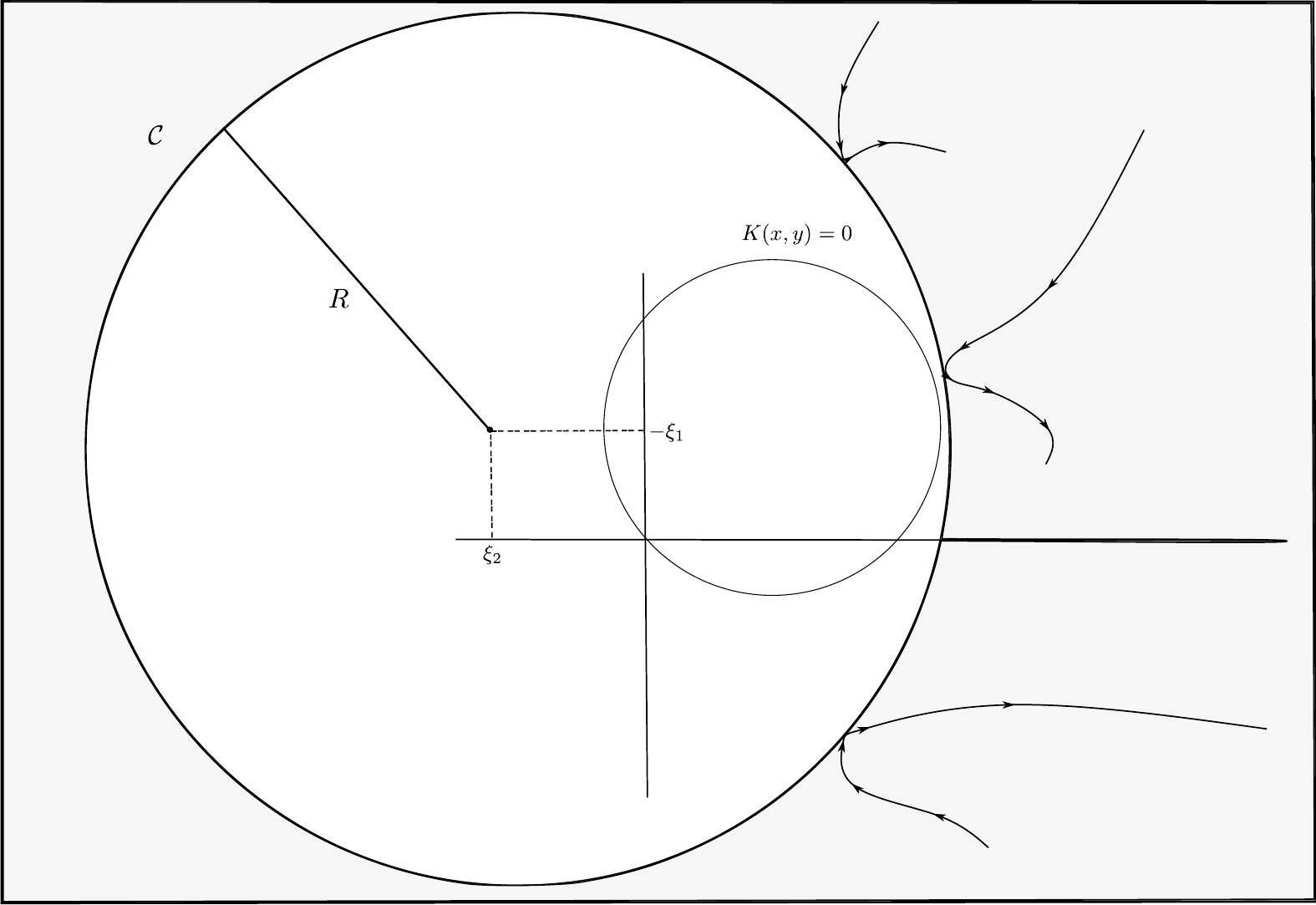}
\end{center}
\caption{Invariant region $\mathcal{C}$ for $\alpha>0$.}%
\label{fig2}%
\end{figure}

\bigskip

$\bullet$ If $\alpha=\beta= 0$ we can consider the induced system on $H_{\xi
}\setminus G$. By \ref{projection2} such system is given by
\begin{equation}
\label{2}\left \{
\begin{array}
[c]{l}%
\dot{t}=u\\
\dot{x}=\langle \Lambda_{t}\xi, \theta \xi \rangle
\end{array}
\right.  .
\end{equation}
However, since $\theta^{-1}=-\theta$ and $\| \rho_{t}\|=1$ we have that
\[
\langle \Lambda_{t} \xi, \theta \xi \rangle=\langle(1-\rho_{t})\theta \xi
,\theta \xi \rangle=|\xi|^{2}-\langle \rho_{t}\xi, \xi \rangle \geq|\xi|^{2}%
-\| \rho_{t}\||\xi|^{2}\geq0
\]
implying that $\dot{x}\geq0$ and hence that (\ref{2}) cannot be controllable.

Therefore, the condition on $\mathcal{D}$ admitting a pair of purely imaginary
eigenvalues is a necessary condition for the controllability of $\Sigma$
concluding the proof.
\end{proof}

\subsubsection{The case $G=R_{3}, \;R_{3, \lambda}$ or $R^{\prime}_{3,
\lambda}$.}

By using Lemma \ref{derivationprop} we can divide the analysis of linear
control systems on $R_{3}, R_{3, \lambda}$ and $R^{\prime}_{3, \lambda}$ as follows:

\textbf{$\bullet$\; \;$G=R_{3}$ or $G=R_{3, \lambda}$ and $\mathcal{D}$ has
only real eigenvalues.}

\begin{proposition}
Let $\Sigma(\mathcal{X},\Delta)$ be a linear control system on $G=R_{3,\lambda
}$ or $R_{3}$. Then,

\begin{itemize}
\item[1.] If $G=R_{3, \lambda}$ the linear system cannot be controllable;

\item[2.] If $G=R_{3}$ the linear system is controllable if and only if it
satisfies the LARC and $\mathfrak{g}=\mathfrak{g}^{0}$ with $\mathcal{D}%
^{\ast}\not \equiv 0$.
\end{itemize}
\end{proposition}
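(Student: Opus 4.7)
The plan is to reduce each case either to a two-dimensional quotient on which the Dath--Jouan theorem \ref{DathJouan} applies, or to a homogeneous space on which a coordinate of the projected system becomes monotone. By Proposition \ref{conjugation} one may assume the control vector is $Y^{1}=(1,0)$, and one may assume LARC (otherwise there is nothing to prove).

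For part $1$ on $G=R_{3,\lambda}$, suppose first that $\mathcal{D}^{\ast}$ admits a non-zero eigenvalue. Since $\mathcal{D}^{\ast}$ is jointly triangularisable with $\theta=\operatorname{diag}(1,\lambda)$ (it has real eigenvalues and commutes with $\theta$), at least one of the two coordinate axes is a one-dimensional $\mathcal{D}$-invariant ideal $I\subset\mathbb{R}^{2}$; choose $I$ so that the eigenvalue surviving in $\mathfrak{g}/I$ is non-zero. The quotient $\mathfrak{g}/I\simeq\mathfrak{aff}(\mathbb{R})$ inherits a derivation whose eigenvalues do not all have zero real part, so by Theorem \ref{DathJouan} the projected system on $\mathrm{Aff}_{0}(\mathbb{R})$ cannot be controllable, and by Proposition \ref{simplesmente}(1) neither can $\Sigma(\mathcal{X},\Delta)$. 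When instead $\mathcal{D}^{\ast}=0$, LARC forces $\xi_{1},\xi_{2}\neq 0$; projecting onto $H_{w_{0}}\setminus G\simeq\mathbb{R}^{2}$ (second bullet of Section~3.1) with $w_{0}=\xi$ and $v_{0}=c(\xi_{2},-\xi_{1})$ gives, by \eqref{projection2}, $\dot{t}=u$ and $\dot{z}=\langle\Lambda_{t}\xi,v_{0}\rangle=c\xi_{1}\xi_{2}\bigl[(e^{t}-1)-(e^{\lambda t}-1)/\lambda\bigr]$. A short convexity analysis of $h(t)=(e^{t}-1)-(e^{\lambda t}-1)/\lambda$ (using $h(0)=h'(0)=0$ and $h'(t)=e^{t}-e^{\lambda t}$ having the sign of $t$ on $\mathbb{R}\setminus\{0\}$) shows $h\geq 0$ on $\mathbb{R}$, so an appropriate sign of $c$ gives $\dot{z}\geq 0$ and the projection is not controllable, finishing part $1$.

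For part $2$ on $G=R_{3}$, Lemma \ref{derivationprop}(3) gives $\mathcal{D}^{\ast}=\bigl(\begin{smallmatrix}\alpha & \gamma\\0 & \alpha\end{smallmatrix}\bigr)$ and the $x$-axis is the unique $\theta$-invariant one-dimensional subspace of $\mathbb{R}^{2}$, so it is a $\mathcal{D}$-invariant ideal. If $\alpha\neq 0$, the quotient $\mathfrak{g}/\mathbb{R}e_{1}\simeq\mathfrak{aff}(\mathbb{R})$ inherits a derivation with eigenvalues $0,\alpha$, and Theorem \ref{DathJouan} combined with Proposition \ref{simplesmente}(1) precludes controllability; hence $\alpha=0$ (equivalently $\mathfrak{g}=\mathfrak{g}^{0}$) is necessary. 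If also $\gamma=0$ (so $\mathcal{D}^{\ast}\equiv 0$), I again use the homogeneous-space projection with $w_{0}=\xi$, $v_{0}=c(-\xi_{2},\xi_{1})$; the explicit computation of $\Lambda_{t}$ for the Jordan $\theta$ of $R_{3}$ gives $\dot{z}=-c\xi_{2}^{2}\,g(t)$ with $g(t)=e^{t}(t-1)+1$, and since $g(0)=0$ and $g'(t)=e^{t}t$, one has $g\geq 0$, so choosing $c<0$ forces $\dot{z}\geq 0$ and rules out controllability. Thus $\mathcal{D}^{\ast}\not\equiv 0$ is also necessary.

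Conversely, suppose LARC, $\mathfrak{g}=\mathfrak{g}^{0}$ (so $\alpha=0$) and $\gamma\neq 0$ on $R_{3}$. LARC forces $\xi_{2}\neq 0$, because otherwise all $\mathcal{D}$-iterates of $(1,0)$ remain in $\mathbb{R}e_{0}\oplus\mathbb{R}e_{1}$. A direct determinant computation shows the triple $\{(1,0,0),\mathcal{D}(1,0,0),\mathcal{D}^{2}(1,0,0)\}=\{(1,0,0),(0,\xi_{1},\xi_{2}),(0,\gamma\xi_{2},0)\}$ has determinant $-\gamma\xi_{2}^{2}\neq 0$, so the ad-rank condition holds. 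By Remark \ref{importantremark}, $\mathcal{A}$ is open, and since $\mathfrak{g}=\mathfrak{g}^{0}$, Theorem \ref{meuteorema} yields controllability. The main technical obstacle is the nilpotent subcase $\mathcal{D}^{\ast}=0$, where no $\mathcal{D}$-invariant ideal of $\mathbb{R}^{2}$ captures the obstruction, and one must identify the correct orthogonal pair $(w_{0},v_{0})$ so that the elementary transcendental function appearing in $\dot{z}$ (either $(e^{t}-1)-(e^{\lambda t}-1)/\lambda$ or $e^{t}(t-1)+1$) has a definite sign on all of $\mathbb{R}$.
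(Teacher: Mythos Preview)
Your argument is correct and runs parallel to the paper's in the nilpotent subcase $\mathcal{D}^{*}\equiv 0$ and in the controllable $R_{3}$ case (ad-rank $+$ Theorem~\ref{meuteorema}). The genuine difference is in how you handle the situation where $\mathcal{D}^{*}$ has a nonzero eigenvalue. The paper treats the invertible case by passing to the homogeneous space $F\setminus G$ of singularities and exhibiting the horizontal line $y=\xi_{2}$ as an explicit barrier for the projected affine system; for $\dim\ker\mathcal{D}^{*}=1$ on $R_{3,\lambda}$ it uses a separate Dath--Jouan quotient. You unify both by always quotienting $G$ by a one-dimensional $\theta$- and $\mathcal{D}^{*}$-invariant ideal to obtain a linear system on $\mathrm{Aff}_{0}(\mathbb{R})$ with a nonzero eigenvalue and then invoking Theorem~\ref{DathJouan}. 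This is shorter and avoids the coordinate computation on $F\setminus G$; the paper's barrier, on the other hand, is more self-contained in that it does not appeal to the two-dimensional classification.

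Two small points to tighten. First, for $\lambda=1$ one has $\theta=\mathrm{id}$, so $\mathcal{D}^{*}$ is not forced to preserve the coordinate axes; your sentence ``at least one of the two coordinate axes is a $\mathcal{D}$-invariant ideal'' should read ``some eigenline of $\mathcal{D}^{*}$ is a $\mathcal{D}$-invariant ideal'' (it is still an ideal because $\theta=\mathrm{id}$). Second, your monotone function $h(t)=(e^{t}-1)-(e^{\lambda t}-1)/\lambda$ vanishes identically when $\lambda=1$; this is harmless because for $\lambda=1$ and $\mathcal{D}^{*}=0$ the $\mathcal{D}$-invariant subalgebra generated by $(1,0)$ is only $\mathrm{span}\{(1,0),(0,\xi)\}$, so LARC fails and the case is empty---but it is worth saying so explicitly. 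Your choice of $w_{0}$ in the $R_{3}$ nilpotent case, giving $g(t)=e^{t}(t-1)+1\geq 0$, is in fact cleaner than the paper's choice $w_{0}=\theta^{-1}\xi$, which yields $\dot{x}=\xi_{2}^{2}\,te^{t}$ and requires the extra observation that the negative part of $te^{t}$ has finite integral.
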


\begin{proof}
Let us analyze the possibilities for $\mathcal{D}^{*}$.

$\bullet \; \dim \ker \mathcal{D}^{\ast}=0.$ In this case $\mathcal{D}^{\ast}$ is
invertible and we can consider the induced system on $F\setminus G$ given by
$\dot{v}=\mathcal{D}^{\ast}v-u\theta(v-\xi)$ which in coordinates reads as
\begin{equation}
\left \{
\begin{array}
[c]{l}%
\dot{x}=\alpha x+by-u\left(  (x-\xi_{1})+\delta(y-\xi_{2})\right) \\
\dot{y}=\beta y-u\lambda(y-\xi_{2})
\end{array}
\right.  ,\label{5}%
\end{equation}
where $\xi=(\xi_{1},\xi_{2})$, $\alpha,\beta \in \mathbb{R}^{\ast}$,
$|\lambda|\in(0,1]$ and $\delta \in \{0,1\}$. Such system is not controllable
since the line $y=\xi_{2}$ works as a barrier for its solutions. In fact, if
for instance $\beta \xi_{2}\geq0$, we have that on points of the form
$(x,\xi_{2})$ it holds that $\dot{y}\geq0$ showing that the solutions starting
on the upper half-plane $\mathcal{C}^{+}=\{(x,y)\in \mathbb{R}^{2};y\geq \xi
_{2}\}$ will remain there (Figure \ref{fig1}). Hence $\Sigma(\mathcal{X}%
,\Delta)$ cannot be controllable.

$\bullet \; \dim \ker \mathcal{D}^{\ast}=1\; \mbox{ and }\;G=R_{3,\lambda}.$ In
this case $\mathcal{D}^{\ast}$ admits two distinct eigenvalues. Moreover, if
$w_{0}\in \ker \mathcal{D}^{\ast}$ is nonzero, the quotient $H_{w_{0}}\setminus
G$ is isomorphic to $\mathrm{Aff}_{0}(\mathbb{R})$ and the induced linear
system admits a nonzero eigenvalue. By Theorem \ref{DathJouan} such system
cannot be controllable and consequently $\Sigma(\mathcal{X},\Delta)$ is not controllable.

$\bullet \; \dim \ker \mathcal{D}^{\ast}=1\; \mbox{ and }\;G=R_{3}.$ Since
$\mathcal{D}^{\ast}$ and $\theta$ commutes $\ker \mathcal{D}^{\ast}$ is
$\theta$-invariant and consequently $\ker \mathcal{D}^{\ast}=\mathrm{span}%
\{e_{1}\}$. By Proposition \ref{derivationprop} the eigenvalue zero is of
multiplicity two for $\mathcal{D}^{\ast}$ implying that $\mathfrak{g}%
=\mathfrak{g}^{0}$. On the other hand, since $\Sigma(\mathcal{X},\Delta)$
satisfies the LARC we must have that $\xi=(\xi_{1},\xi_{2})$ with $\xi_{2}%
\neq0$ and therefore $\{ \xi,\mathcal{D}^{\ast}\xi \}$ is a linear independent
set. Thus, $\Sigma(\mathcal{X},\Delta)$ satisfies the ad-rank condition. By
Theorem \ref{meuteorema} we have the controllability of $\Sigma(\mathcal{X}%
,\Delta)$.

$\bullet \; \dim \ker \mathcal{D}^{\ast}=2.$ Let us assume w.l.o.g. that
$\Sigma(\mathcal{X},\Delta)$ satisfies the LARC. By \ref{projection2}, for any
$w_{0}\in \mathbb{R}^{2}$ the induced system on the homogeneous space
$H_{w_{0}}\setminus G$ is given in coordinates by
\begin{equation}
\left \{
\begin{array}
[c]{l}%
\dot{t}=u\\
\dot{x}=\langle \Lambda_{t}\xi,v_{0}\rangle
\end{array}
\right.  \; \; \mbox{ where }\; \; \langle w_{0},v_{0}\rangle=0\label{6}%
\end{equation}
and we have the following possibilities:

\begin{itemize}
\item[1.] In $R_{3}$ it holds that $\xi=(\xi_{1}, \xi_{2})$ with $\xi_{2}%
\in \mathbb{R}^{*}$. By considering $w_{0}=\theta^{-1}\xi$ the induced system
becomes
\begin{equation}
\left \{
\begin{array}
[c]{l}%
\dot{t}=u\\
\dot{x}=\xi_{2}^{2}t\mathrm{e}^{t}%
\end{array}
\right.
\end{equation}
which is certainly noncontrollable.


\item[2.] In $R_{3,\lambda}$ it holds that $\xi=(\xi_{1},\xi_{2})$ with
$\xi_{1},\xi_{2}\in \mathbb{R}^{\ast}$. By considering $w_{0}=\lambda(\xi
_{2}^{-1},\xi_{1}^{-1})$ the induced system becomes
\begin{equation}
\left \{
\begin{array}
[c]{l}%
\dot{t}=u\\
\dot{x}=\lambda(\mathrm{e}^{t}-1)-(\mathrm{e}^{\lambda t}-1)
\end{array}
\right.
\end{equation}
which is certainly noncontrollable since $\dot{x}\leq0$ if $\lambda \in
\lbrack-1,0)$ and $\dot{x}\geq0$ if $\lambda \in(0,1]$.
\end{itemize}

In both cases, $\mathcal{D}^{*}\equiv0$ implies that $\Sigma(\mathcal{X},
\Delta)$ cannot be controllable, which concludes the proof.
\end{proof}

\begin{figure}[h]
	\begin{center}
		\includegraphics[scale=.8]{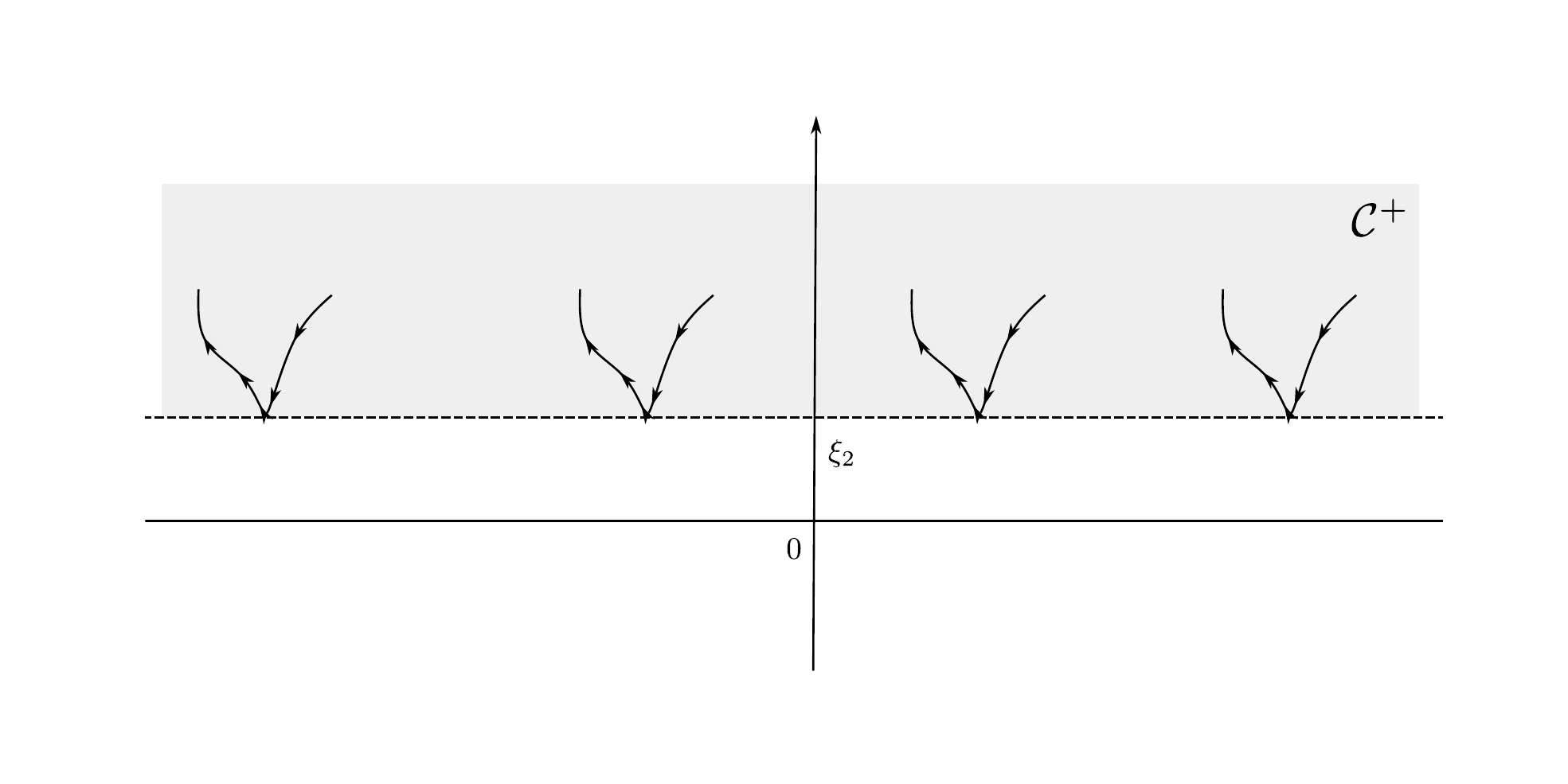}
	\end{center}
	\caption{Invariant region $\mathcal{C}^{+}$ for $\beta>0$.}%
	\label{fig1}%
\end{figure}

\textbf{$\bullet$\; \;$\mathcal{D}$ has a pair of complex eigenvalues and
$G=R_{3, 1}$ or $G=R^{\prime}_{3, \lambda}$}

\begin{proposition}
A linear control system $\Sigma(\mathcal{X}, \Delta)$ on $G$ is controllable
if and only if it satisfies the LARC.
\end{proposition}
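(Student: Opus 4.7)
The necessity of the LARC has been established previously, so I concentrate on sufficiency. By Proposition~\ref{conjugation} we may write $\Delta=\mathrm{span}\{(1,0)\}$ and set $\mathcal{D}(1,0)=(0,\xi)$. Since $\mathcal{D}^{*}$ has complex eigenvalues $\alpha\pm ib$ with $b\neq 0$, it admits no real one-dimensional invariant subspace, so $\{\xi,\mathcal{D}^{*}\xi\}$ is linearly independent as soon as $\xi\neq 0$; but the LARC forces $\xi\neq 0$. Hence $\{(1,0),\mathcal{D}(1,0),\mathcal{D}^{2}(1,0)\}$ spans $\mathfrak{g}$, so the ad-rank condition is in force. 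By Remark~\ref{importantremark} and Theorem~\ref{meuteorema} the reachable set $\mathcal{A}$ is open, and
$$G^{+,0}\subset\mathcal{A}, \qquad G^{-,0}\subset\mathcal{A}^{*}.$$

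If $\alpha=0$ then $\mathfrak{g}=\mathfrak{g}^{0}$, so $G^{+,0}=G^{-,0}=G$ and controllability is immediate. Otherwise, by time-reversing the system (which exchanges $\mathcal{A}$ and $\mathcal{A}^{*}$ and preserves both the LARC and the complex-eigenvalue condition since it sends $\mathcal{D}\mapsto-\mathcal{D}$) we may assume $\alpha>0$. Then $\mathfrak{g}^{+}=\mathbb{R}^{2}$ and $\mathfrak{g}^{0}=\mathbb{R}(1,-(\mathcal{D}^{*})^{-1}\xi)$, whence $G^{+,0}=G\subset\mathcal{A}$; only $\mathcal{A}^{*}=G$ remains to be established.

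The plan is to project to the homogeneous space $F\setminus G\simeq\mathbb{R}^{2}$ introduced in the previous subsection, where $F$ is the subgroup of singularities of $\mathcal{X}$, which is one-dimensional because $\mathcal{D}^{*}$ is invertible. A direct check using the parametrization $F=\{(t,-(\mathcal{D}^{*})^{-1}\Lambda_{t}\xi):t\in\mathbb{R}\}$ shows that $F=G^{0}$, so in particular $F\subset\mathcal{A}^{*}$. Identifying $\mathbb{R}^{2}$ with $\mathbb{C}$ through the simultaneous complex structure of the commuting operators $\mathcal{D}^{*}$ and $\theta$, the induced system on $F\setminus G$ takes the scalar complex form
$$\dot z = \bigl[(\alpha+ib)-u\omega\bigr]z + u\xi,$$
with $\omega=1$ for $G=R_{3,1}$ and $\omega=\lambda+i$ for $G=R'_{3,\lambda}$. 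For each constant control $u$ this is a planar linear system whose eigenvalues have real part $\alpha-u\,\mathrm{Re}(\omega)$ and (generically) nonzero imaginary part, with a unique equilibrium $z_{u}^{*}=u\xi/(u\omega-(\alpha+ib))$. The decisive difference from the $E_n$ case is that $\mathrm{Re}(\omega)\in\{1,\lambda\}$ is nonzero, so the real part of the eigenvalue can be tuned to be negative by a suitable choice of $u$, producing attracting spirals. Concatenating trajectories of such contracting linear systems and exploiting the rotation provided by the nonzero imaginary part, one proves that the induced planar system is controllable, both forward and backward.

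To lift this back to $G$, observe that each $f\in F$ is a singularity of $\mathcal{X}$, so $\{\varphi_{t}(f):t\in\mathbb{R}\}=\{f\}\subset F\subset\mathcal{A}^{*}$, and Lemma~\ref{translation}(1) yields $L_{f}(\mathcal{A}^{*})\subset\mathcal{A}^{*}$ for every $f\in F$. Combining this with the equality $\pi(\mathcal{A}^{*}(e))=F\setminus G$, which follows from backward controllability of the induced system, we obtain
$$G=F\cdot\mathcal{A}^{*}(e)=\bigcup_{f\in F}L_{f}(\mathcal{A}^{*}(e))\subset\mathcal{A}^{*},$$
finishing the proof. The main obstacle in this plan is the middle step---proving controllability of the planar bilinear induced system---which requires a careful geometric analysis of concatenations of spiraling trajectories; this is precisely the feature that distinguishes $R_{3,1}$ and $R'_{3,\lambda}$ from $E_n$, where the analogous induced system admits an invariant region and is not controllable.
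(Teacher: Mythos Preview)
Your argument assumes throughout that $\mathcal{D}^{*}$ has non-real eigenvalues ($b\neq 0$). For $G=R_{3,1}$ this is indeed forced by the standing hypothesis, but the proposition also covers $G=R'_{3,\lambda}$ \emph{without any restriction on $\mathcal{D}$} (this is item~5 of the theorem being proved). By Lemma~\ref{derivationprop}(4) every derivation of $\mathfrak{r}'_{3,\lambda}$ has $\mathcal{D}^{*}=\left(\begin{smallmatrix}\alpha&-\beta\\\beta&\alpha\end{smallmatrix}\right)$, and the case $\beta=0$---i.e.\ $\mathcal{D}^{*}=\alpha\operatorname{id}$, possibly with $\alpha=0$---is perfectly compatible with the LARC, since $[(1,0),(0,\xi)]=(0,\theta\xi)$ and $\theta$ has no real eigenvectors. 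In that situation $\{\xi,\mathcal{D}^{*}\xi\}$ is linearly \emph{dependent}, so your ad-rank argument collapses; without ad-rank you cannot invoke Theorem~\ref{meuteorema} to obtain openness of $\mathcal{A}$ or the inclusions $G^{\pm,0}\subset\mathcal{A}^{(*)}$, and the entire structure of your proof breaks down. The paper handles these missing cases by genuinely different methods: for $\mathcal{D}^{*}=\alpha\operatorname{id}$ with $\alpha\neq 0$ it still passes to $F\setminus G$ (using Elliott and Jurdjevic--Sallet for the planar system) but then proves $F\subset\operatorname{cl}(\mathcal{A})\cap\operatorname{cl}(\mathcal{A}^{*})$ via an explicit limit of $\varphi_{s}(t(1,0))$ rather than via ad-rank; for $\mathcal{D}^{*}\equiv 0$ it observes that $\mathcal{D}=\operatorname{ad}(0,\zeta)$ with $\zeta=-\theta^{-1}\xi$ and uses that $\{\Lambda_{t}\zeta:t\in\mathbb{R}\}$ is a spiral to obtain $\mathbb{R}^{2}\subset\mathcal{A}$ directly.

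Even in the case you do treat ($b\neq 0$), the planar controllability step is only sketched---you yourself flag it as the main obstacle. The paper dispatches it when $\{\mathcal{D}^{*},\theta\}$ is linearly independent by citing Elliott's bilinear controllability theorem together with the Jurdjevic--Sallet result on affine systems, rather than concatenating spirals by hand. Note also that your approach must still cope with the degenerate sub-case $\mathcal{D}^{*}=a\theta$, $a\neq 0$ (which on $R'_{3,\lambda}$ still has $b\neq 0$): there the bilinear part $(\mathcal{D}^{*}-u\theta)=(a-u)\theta$ generates a one-dimensional Lie algebra and Elliott's theorem does not apply, so the paper gives a separate explicit trajectory construction for this case as well.
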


\begin{proof}
Let us assume that $\Sigma$ satisfies the LARC and by Proposition
\ref{conjugation} that $(1,0)\in \Delta$. We have three possibilities to consider:

$\bullet$ $\{ \mathcal{D}^{\ast},\theta \}$ is linearly independent. In this
case, it holds that $\mathcal{D}^{\ast}$ is invertible and we can consider the
induced system on the homogeneous space $F\setminus G\simeq \mathbb{R}^{2}$
given by
\begin{equation}
\dot{v}=\mathcal{D}^{\ast}v+u(\theta v-\xi),\; \; \mbox{ where }\; \;(0,\xi
)=\mathcal{D}(1,0)\neq0.\label{3}%
\end{equation}
Since $\{ \mathcal{D}^{\ast},\theta \}$ is linear independent, it holds that the
associate bilinear system $\dot{w}=(\mathcal{D}^{\ast}+u\theta)w$ satisfies

\begin{itemize}
\item[(i)] There exists $u\in \mathbb{R}$ such that $\mathcal{D}^{*}+u\theta$
is skew-symmetric,

\item[(ii)] It satisfies the LARC,
\end{itemize}

and hence it is controllable in $\mathbb{R}^{2}\setminus \{0\}$ (see Theorem
3.3 of \cite{Elliot}). Moreover, the fact that $\dot{v}\neq0$ for any
$u\in \mathcal{U}$, implies by Theorem 2 of \cite{Jurd1} that (\ref{3}) is
controllable and so $G=F\cdot \mathcal{A}=F\cdot \mathcal{A}^{*}$.

If $\mathcal{D}$ admits a pair of complex eigenvalues then $\Sigma$ satisfies
the ad-rank condition and by Lemma \ref{translation} and Theorem
\ref{meuteorema} it holds that $\Sigma(\mathcal{X},\Delta)$ is controllable.
On the other hand, if $\mathcal{D}^{\ast}=\alpha \operatorname{id}$ then
equation \ref{linearflow} gives us that
\[
\varphi_{s}(t(1,0))=\left(  t,(\mathrm{e}^{s\alpha}-1)\alpha^{-1}\Lambda
_{t}\xi \right)  ,t,s\in \mathbb{R}.
\]
If $s>0$ and $\alpha<0$ we obtain
\[
\mathcal{A}\ni \varphi_{s}(t(1,0))=\left(  t,(\mathrm{e}^{s\alpha}%
-1)\alpha^{-1}\Lambda_{t}\xi \right)  \rightarrow(t,-\alpha^{-1}\Lambda_{t}%
\xi)\; \; \; \mbox{ as }\; \; \;s\rightarrow+\infty.
\]
Analogously, if $\alpha>0$ we get
\[
\mathcal{A}^{\ast}\ni \varphi_{-s}(t(1,0))=\left(  t,(\mathrm{e}^{-s\alpha
}-1)\alpha^{-1}\Lambda_{t}\xi \right)  \rightarrow(t,-\alpha^{-1}\Lambda_{t}%
\xi)\; \; \; \mbox{ as }\; \; \;s\rightarrow+\infty.
\]
By Lemma \ref{translation}, in any case $(t,-\alpha^{-1}\Lambda_{t}\xi
)\in \operatorname{cl}(\mathcal{A})\cap \operatorname{cl}(\mathcal{A}^{\ast})$
for any $t\in \mathbb{R}$. Since $F=\{(t,-\alpha^{-1}\Lambda_{t}\xi
),t\in \mathbb{R}\}$ we have that $F\subset \overline{\mathcal{A}}\cap
\overline{\mathcal{A}^{\ast}}$ implying by Lemma \ref{translation} and the
LARC that $\Sigma(\mathcal{X},\Delta)$ is controllable.

$\bullet$ $\mathcal{D}^{\ast}=a\theta$ with $a\neq0$. In this case, the
induced system on $F\setminus G$ has the form
\[
\dot{v}=(a-u)\theta v+u\xi
\]
and we get that
\begin{equation}
\varphi(t,v,u)=\left \{
\begin{array}
[c]{ccc}%
v+ta\xi, & \mbox{ if }u\equiv a & \\
\rho_{at}v & \mbox{ if }u\equiv0 &
\end{array}
\right.  .
\end{equation}
Let us assume $a,a\lambda \in \mathbb{R}^{+}$, since the other possibilities are
analogous. For any given $p,q\in \mathbb{R}^{2}$ we construct a trajectory from
$p$ to $q$ as follows (see Figure \ref{fig3}):

\begin{itemize}
\item[1.] If $p=0$ consider $p^{\prime}=0+t_{0}a\xi$ where $t_{0}$ is any
positive real number. If $p\neq0$ consider $p^{\prime}=p$;

\item[2.] Go through the spiral $\rho_{at}p^{\prime}$ from $p^{\prime}$ to a
point $q^{\prime}$ of the form $q+t_{1}a\xi$ where $t_{1}\leq0$;

\item[3.] Go from $q^{\prime}$ to $q$ through the line $q^{\prime}+ta\xi$.
\end{itemize}

\begin{figure}[th]
\begin{center}
\includegraphics[scale=.7]{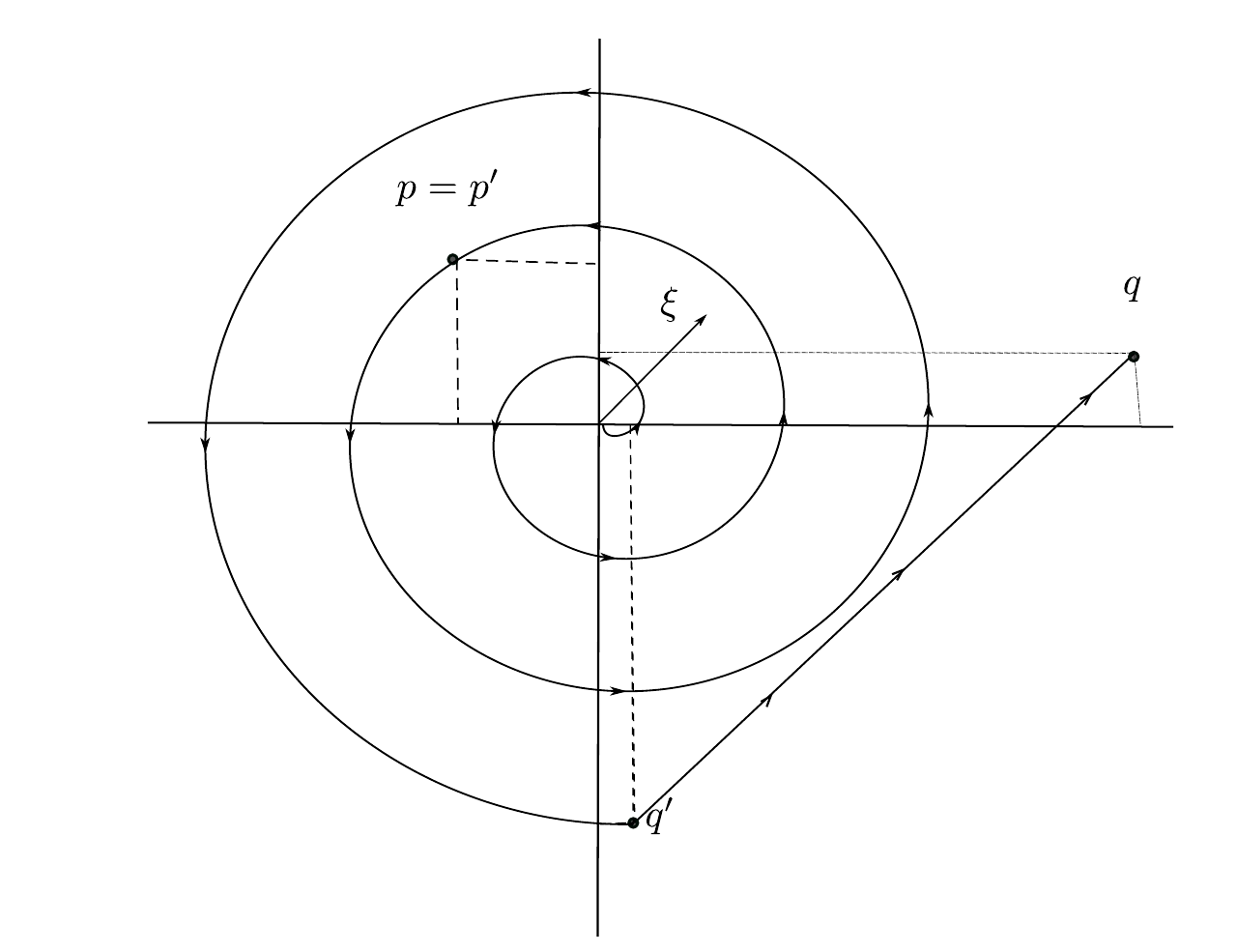}
\end{center}
\caption{Solution connecting $p, q\in \mathbb{R}^{2}$.}%
\label{fig3}%
\end{figure}

$\bullet$ $\mathcal{D}^{\ast}$ is identically zero. By using the fact that
$\mathcal{D}^{\ast}\equiv0$ a simple calculation gives us that $\mathcal{D}%
=\operatorname{ad}(0,\zeta)$ where $\zeta=-\theta^{-1}\xi$. Moreover, since
$H\subset \mathcal{A}_{s}$ for any $s>0$ we have
\begin{equation}
(t,0)\varphi_{s}(-t,0)=(t,0)(-t,(1-\rho_{-t})s\zeta)=(0,(\rho_{t}%
-1)s\zeta)=(0,s\Lambda_{t}\zeta)\in \mathcal{A},\; \; \; \mbox{ for all }\; \;t\in
\mathbb{R},\;s>0.\label{final}%
\end{equation}
However, the fact that $\{ \Lambda_{t}\zeta,t\in \mathbb{R}\}$ is a spiral
implies that $\{s\Lambda_{t}\zeta,\;t\in \mathbb{R},s>0\}=\mathbb{R}^{2}$ and
consequently that $\mathbb{R}^{2}\subset \mathcal{A}$. By Lemma
\ref{translation} we have also that $\mathbb{R}^{2}\subset \mathcal{A}^{\ast}$
and therefore,
\[
G=\mathbb{R}^{2}\cdot H\subset \mathbb{R}^{2}\cdot \mathcal{A}\subset
\mathcal{A}\; \; \; \mbox{ and }\; \; \;G=\mathbb{R}^{2}\cdot H\subset
\mathbb{R}^{2}\cdot \mathcal{A}^{\ast}\subset \mathcal{A}^{\ast}%
\]
concluding the proof.
\end{proof}

\subsection{The two-dimensional case}

For the two-dimensional case it holds that

\begin{theorem}
Let $\Sigma(\mathcal{X},\Delta)$ be a linear control system on a
three-dimensional nonnilpotent solvable Lie group $G$ that satisfies the LARC
and $\dim \Delta=2$. It holds:

\begin{itemize}
\item[1.] If $G=R_{2}$ or $\widetilde{R_{2}}$: $\Sigma(\mathcal{X}, \Delta)$
is controllable if and only if $\dim \mathfrak{g}^{0}>1$ or $\dim
\mathfrak{g}^{0}=1$ and $\Delta \simeq \mathfrak{aff}(\mathbb{R})$;

\item[2.] If $G=E_{n}, \widetilde{E}$ or $R^{\prime}_{3, \lambda}$:
$\Sigma(\mathcal{X}, \Delta)$ is controllable;

\item[3.] If $G=R_{3}$: $\Sigma(\mathcal{X}, \Delta)$ is controllable if and
only if $\mathfrak{g}=\mathfrak{g}^{0}$;

\item[4.] If $G=R_{3, \lambda}$: $\Sigma(\mathcal{X}, \Delta)$ is controllable
if and only if $\ker \mathcal{D}^{*}\not \subset \Delta$ or $\mathcal{D}$ has a
pair of complex eigenvalues.
\end{itemize}
\end{theorem}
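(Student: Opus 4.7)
The plan is to mirror the one-dimensional case. Using Proposition \ref{conjugation} I normalize the system as $Y^{1}=(1,0)$ and $Y^{2}=(0,w)$ with $w\in\mathbb R^{2}$. Since $\Delta$ has codimension one, Remark \ref{importantremark} gives that the LARC is equivalent to the ad-rank condition, so $\mathcal{A}$ is open and Theorem \ref{meuteorema} yields $G^{+,0}\subset\mathcal{A}$ and $G^{-,0}\subset\mathcal{A}^{*}$. Whenever the hypotheses force $\mathfrak g=\mathfrak g^{0}$, controllability is immediate; this settles the ``if'' directions of item 3 with $\mathfrak g=\mathfrak g^{0}$, of item 1 with $\dim\mathfrak g^{0}=3$, and of item 4 with $\mathcal D^{*}\equiv 0$ (where the LARC forces $\xi\neq 0$, hence $\mathcal D$ is nilpotent). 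Item 2 is vacuous: a two-dimensional Lie subalgebra $\Delta=\mathrm{span}\{(1,0),(0,w)\}$ forces $w$ to be a real $\theta$-eigenvector, and none exists in $\mathfrak e$ or $\mathfrak r'_{3,\lambda}$; the remaining option $\Delta\subset\mathbb R^{2}$ is $\mathcal D$-invariant and cannot satisfy the LARC (Lemma \ref{subalgebra}).

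For the remaining ``if'' directions I exhibit a closed one-parameter subgroup $H\subset\overline{\mathcal{A}}\cap\overline{\mathcal{A}^{*}}$ and combine Lemma \ref{translation} with meuteorema. In item 4 with $\ker\mathcal D^{*}\not\subset\Delta$, any $u\in\ker\mathcal D^{*}\setminus\mathrm{span}\{w\}$ provides $H=\exp(\mathbb R u)\subset\fix(\varphi)\subset\overline{\mathcal{A}}\cap\overline{\mathcal{A}^{*}}$. In item 4 with complex eigenvalues, $H$ is obtained from the limit argument for $\mathcal D^{*}=\alpha\id$ used on $R'_{3,\lambda}$ in the dim-one case. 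For item 1 with $\Delta\simeq\mathfrak{aff}(\mathbb R)$ (i.e.\ $w=e_{2}$), the decomposition $\mathfrak r_{2}=\mathbb R e_{1}\times\mathfrak{aff}(\mathbb R)$ makes $\exp(\mathbb R e_{1})$ (respectively $\mathbb T$) pointwise fixed by $\varphi$, and the factor projection to $\mathrm{Aff}_{0}(\mathbb R)$ sends $\Sigma$ to a system whose control vectors span $\mathfrak{aff}(\mathbb R)$ (trivially controllable); Proposition \ref{simplesmente}(2) closes the argument. The subcase $\dim\mathfrak g^{0}=2$ of item 1 is analogous, quotienting the $\mathcal D$-kernel direction.

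For the ``only if'' direction I build a surjective homomorphism $\psi:G\to H$ onto a two-dimensional non-nilpotent solvable Lie group with $\dim\Delta_{\psi}=1$ such that the induced derivation carries a nonzero real eigenvalue and the projected system inherits the LARC; Theorem \ref{DathJouan} then denies controllability downstairs and Proposition \ref{simplesmente}(1) transfers the obstruction. The quotient in each case collapses a $\mathcal D$-invariant normal one-parameter subgroup of the nilradical: $\exp(\mathbb R(0,e_{1}))$ for item 3 (the unique real $\theta$-eigenline in $\mathfrak r_{3}$), $\exp(\mathbb R(0,w))$ for item 4 with $\ker\mathcal D^{*}\subset\Delta$ and only real eigenvalues, and the center for item 1 with $\dim\mathfrak g^{0}=1$ and abelian $\Delta$. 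The main obstacle will be verifying case by case that $(d\psi)_{e}\mathcal D$ retains a nonzero real eigenvalue on the quotient and that $\Delta_{\psi}$ stays one-dimensional and transverse to the nilradical of $H$; these reduce to short computations using the explicit form of $\mathcal D^{*}$ from Lemma \ref{derivationprop} together with the ad-rank condition to force nonvanishing of the relevant component of $\xi$.
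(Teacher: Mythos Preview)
Your overall architecture matches the paper's: normalize via Proposition~\ref{conjugation}, invoke the ad-rank condition (Remark~\ref{importantremark}) and Theorem~\ref{meuteorema} for the $\mathfrak g=\mathfrak g^{0}$ cases, handle item~2 by Lemma~\ref{subalgebra}, and obstruct controllability by projecting to $\mathrm{Aff}_{0}(\mathbb R)$ and applying Theorem~\ref{DathJouan}. Your ``only if'' strategy via group quotients is in fact cleaner than the paper's treatment of $R_{3}$, where the paper instead projects to the homogeneous space $F\setminus G$ and exhibits an explicit invariant half-plane (the line $y=\xi_{2}$ as a barrier); your route through $G/H_{e_{1}}\simeq\mathrm{Aff}_{0}(\mathbb R)$ avoids that computation entirely.

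There are, however, two genuine gaps in the ``if'' directions. First, in item~1 with $\Delta\simeq\mathfrak{aff}(\mathbb R)$ and $\dim\mathfrak g^{0}=1$, your claim that the center $\exp(\mathbb R(0,e_{1}))$ is pointwise fixed by $\varphi$ is false: since $\mathcal D^{*}$ commutes with $\theta=\left(\begin{smallmatrix}0&0\\0&1\end{smallmatrix}\right)$ one has $\mathcal D^{*}e_{1}=a e_{1}$, and $\dim\mathfrak g^{0}=1$ forces $a\neq 0$, so $\varphi_{s}(0,re_{1})=(0,r\mathrm e^{sa}e_{1})$. Hence the kernel of your projection to $\mathrm{Aff}_{0}(\mathbb R)$ is not a priori contained in $\overline{\mathcal A}\cap\overline{\mathcal A^{*}}$ (it lies in $G^{+}$ or $G^{-}$, giving only one of the two inclusions), and Proposition~\ref{simplesmente}(2) does not close. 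The paper instead quotients by $H_{e_{2}}$, which \emph{is} contained in $G_{\Delta}\subset\overline{\mathcal A}\cap\overline{\mathcal A^{*}}$ and is $\varphi$-invariant; the induced system lives on $H_{e_{2}}\setminus G\simeq\mathbb R^{2}$ and is controllable by the Kalman condition, after which Lemma~\ref{translation} finishes.

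Second, in item~4 with complex eigenvalues you invoke ``the limit argument for $\mathcal D^{*}=\alpha\operatorname{id}$'', but that argument from the one-dimensional analysis applies precisely when $\mathcal D^{*}$ is a real scalar, which is incompatible with complex eigenvalues. On $R_{3,1}$ one has $\theta=\operatorname{id}$, so complex eigenvalues of $\mathcal D^{*}$ force $\{\mathcal D^{*},\theta\}$ linearly independent; the paper's one-dimensional proof for that case goes through bilinear controllability on $F\setminus G$ (via the Jurdjevic--Sallet theorem), not a limit. The paper then simply observes that the one-input subsystem $\Sigma(\mathcal X,\mathbb R(1,0))$ is already controllable and a fortiori so is the two-input system. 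You should either replicate that reduction or, if you wish to avoid the bilinear machinery, argue directly that ad-rank holds (since $\mathcal D^{*}w\notin\mathbb R w$) and then produce the required $H\subset\overline{\mathcal A}\cap\overline{\mathcal A^{*}}$ by other means.
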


\subsubsection{The case $G=R_{2}$ or $G=\widetilde{R_{2}}$.}

\begin{theorem}
Let $\Sigma(\mathcal{X}, \Delta)$ be a linear control system on $G$. Then
$\Sigma(\mathcal{X}, \Delta)$ is controllable if and only if it satisfies the
LARC and $\dim \mathfrak{g}^{0}>1$ or $\dim \mathfrak{g}^{0}=1$ and
$\Delta \simeq \mathfrak{aff}(\mathbb{R})$
\end{theorem}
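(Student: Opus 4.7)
The strategy is to reduce $\Sigma$ to canonical shapes via Proposition \ref{conjugation} and then analyze controllability through the combined data $(\dim \mathfrak{g}^{0},\Delta)$. By Proposition \ref{conjugation} I may take $\Delta = \mathrm{span}\{e_{1},(0,w)\}$ with $e_{1}=(1,0)$; the requirement that $\Delta$ be a Lie subalgebra together with $[e_{1},(0,w)] = (0,\theta w)$ and $\theta = \mathrm{diag}(0,1)$ forces $w$ to be a $\theta$-eigenvector, leaving up to scaling either $w=e_{2}$ (so $\Delta$ is abelian) or $w=e_{3}$ (so $\Delta\simeq\mathfrak{aff}(\mathbb{R})$). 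The commutation $\mathcal{D}^{\ast}\theta = \theta\mathcal{D}^{\ast}$ further forces $\mathcal{D}^{\ast}=\mathrm{diag}(\mu_{2},\mu_{3})$, and I write $(0,\xi)=\mathcal{D}e_{1}$ with $\xi=\xi_{1}e_{2}+\xi_{2}e_{3}$.

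For sufficiency I split on $\dim\mathfrak{g}^{0}$. When $\mathfrak{g}=\mathfrak{g}^{0}$, codimension one plus LARC becomes the ad-rank condition by Remark \ref{importantremark}, so $\mathcal{A}$ is open and Theorem \ref{meuteorema} delivers controllability. When $\dim\mathfrak{g}^{0}=1$ with $\Delta\simeq\mathfrak{aff}(\mathbb{R})$, the invertibility of $\mathcal{D}^{\ast}$ lets me descend to $F\setminus G\simeq\mathbb{R}^{2}$ via (\ref{projection}); the projected control vectors $\xi-\theta v$ and $\mathcal{D}^{\ast}e_{3}$ span $\mathbb{R}^{2}$, the induced planar affine system is controllable, and because points of $F$ are fixed by $\varphi_{t}$, Lemma \ref{translation} places $F\subset\operatorname{cl}(\mathcal{A})\cap\operatorname{cl}(\mathcal{A}^{\ast})$, so Proposition \ref{simplesmente} item 2 lifts controllability to $G$. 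In the remaining case $\dim\mathfrak{g}^{0}=2$ I quotient by the central ideal $\mathbb{R}e_{2}$ (which is $\mathcal{D}$-invariant since $\mathcal{D}e_{2}=\mu_{2}e_{2}$) to obtain $\pi: G\to\mathrm{Aff}_{0}(\mathbb{R})$: if $\Delta\simeq\mathfrak{aff}$ the projected control is full-dimensional and the quotient system is trivially controllable, while if $\Delta$ is abelian the projected one-dimensional system is controllable by Theorem \ref{DathJouan} once I check that the induced derivation has zero-real-part spectrum. I then verify $\ker\pi=\mathbb{R}e_{2}\subset\operatorname{cl}(\mathcal{A})\cap\operatorname{cl}(\mathcal{A}^{\ast})$ by combining the inclusions $G^{+,0}\subset\mathcal{A}$ and $G^{-,0}\subset\mathcal{A}^{\ast}$ from Theorem \ref{meuteorema} with either $\mathbb{R}e_{2}\subset G_{\Delta}$ (when $e_{2}\in\Delta$) or Lemma \ref{translation} applied to fixed points of $\varphi_{t}$, after which Proposition \ref{simplesmente} item 2 closes the argument.

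For necessity I argue contrapositively: if $\dim\mathfrak{g}^{0}=1$ and $\Delta$ is abelian then $\mu_{2}\mu_{3}\neq0$, and projection onto $G/\mathbb{R}e_{2}\simeq\mathrm{Aff}_{0}(\mathbb{R})$ collapses the $e_{2}$-direction of control while the induced derivation on $\mathfrak{aff}(\mathbb{R})$ still carries the nonzero eigenvalue $\mu_{3}$; the projected algebra thus strictly exceeds its zero-real-part piece, Theorem \ref{DathJouan} forbids controllability of the projection, and Proposition \ref{simplesmente} item 1 transfers non-controllability to $\Sigma$. The main obstacle will be the $\dim\mathfrak{g}^{0}=2$ regime with $\Delta$ abelian, where one must decide which of $\mu_{2},\mu_{3}$ vanishes and whether the surviving eigenvalue ends up in the kernel of $\pi$; if controllability of the quotient can be established, the delicate part is placing $\mathbb{R}e_{2}$ inside $\operatorname{cl}(\mathcal{A})\cap\operatorname{cl}(\mathcal{A}^{\ast})$ before invoking Proposition \ref{simplesmente} item 2, a step that orchestrates the explicit flow formula (\ref{linearflow}), Lemma \ref{translation}, and Theorem \ref{meuteorema} simultaneously.
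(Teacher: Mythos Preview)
Your reduction to the two canonical subalgebras $\Delta=\mathrm{span}\{e_1,e_2\}$ (abelian) and $\Delta=\mathrm{span}\{e_1,e_3\}\simeq\mathfrak{aff}(\mathbb{R})$ is correct, and the necessity argument via the quotient $G/\mathbb{R}e_2\simeq\mathrm{Aff}_0(\mathbb{R})$ together with Theorem~\ref{DathJouan} matches the paper. However, two of your sufficiency arguments have genuine gaps.

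\textbf{The case $\dim\mathfrak{g}^0=1$, $\Delta\simeq\mathfrak{aff}(\mathbb{R})$.} You pass to $F\setminus G$ and then assert that ``Lemma~\ref{translation} places $F\subset\operatorname{cl}(\mathcal{A})\cap\operatorname{cl}(\mathcal{A}^{\ast})$'' because points of $F$ are $\varphi_t$-fixed. Neither item of Lemma~\ref{translation} says this: item~1 gives a translation property \emph{assuming} the orbit already lies in $C$, and item~2 is only the biconditional $g\in\mathcal{A}\Leftrightarrow g^{-1}\in\mathcal{A}^*$. Being a singularity of $\mathcal{X}$ does not by itself put a point into $\operatorname{cl}(\mathcal{A})$. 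Moreover, Proposition~\ref{simplesmente} item~2 requires a group homomorphism, and $G\to F\setminus G$ is merely a projection onto a homogeneous space since $F$ is not normal. The paper avoids both issues by quotienting instead by the one-parameter group $H_{e_3}\subset G_\Delta$ (in the paper's indexing $H_{e_2}$): this subgroup is $\varphi$-invariant and lies in $\mathcal{A}\cap\mathcal{A}^*$ \emph{for free} because it is generated by a control vector, so Lemma~\ref{translation} applies directly once the induced system on $H_{e_3}\setminus G\simeq\mathbb{R}^2$ is shown controllable.

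\textbf{The case $\dim\mathfrak{g}^0=2$, $\Delta$ abelian.} Your quotient by the central ideal $\mathbb{R}e_2$ can fail outright. If $\mu_2=0$ and $\mu_3\neq0$ (which is a legitimate configuration with $\dim\mathfrak{g}^0=2$), the induced derivation on $\mathfrak{g}/\mathbb{R}e_2\simeq\mathfrak{aff}(\mathbb{R})$ retains the nonzero eigenvalue $\mu_3$, while the projected control subalgebra becomes one-dimensional (the $e_2$-direction collapses). By Theorem~\ref{DathJouan} the quotient system is then \emph{not} controllable, so Proposition~\ref{simplesmente} item~2 gives nothing. You flag this as ``the main obstacle'' but do not resolve it. The paper's argument here is much shorter and bypasses any quotient: since LARC forces $\Delta\neq\mathfrak{g}^0$ (otherwise $\Delta$ would be $\mathcal{D}$-invariant), dimension count gives $\mathfrak{g}=\mathfrak{g}^0+\Delta$ and hence $G=G^0\cdot G_\Delta$; Theorem~\ref{meuteorema} (via the ad-rank condition) yields $G^0\subset\mathcal{A}\cap\mathcal{A}^*$, and Lemma~\ref{translation} then gives $G=G^0\cdot G_\Delta\subset\mathcal{A}$ and likewise for $\mathcal{A}^*$.
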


\begin{proof}
Let us assume that $\Sigma(\mathcal{X},\Delta)$ satisfies the LARC. By
Propositions \ref{simplesmente} and \ref{conjugation} we only have to consider
a linear control system $\Sigma(\mathcal{X},\Delta)$ on $\widetilde{R_{2}}$
such that $\Delta=\mathrm{span}\{(1,0),(0,w)\}$. Moreover,
\[
\Delta \ni \lbrack(1,0),(0,w)]=(0,\theta w)\implies \theta w\in \mathrm{span}\{w\}
\]
and therefore $\Delta=\mathrm{span}\{(1,0),(0,e_{2})\} \simeq \mathrm{Aff}%
_{0}(\mathbb{R})$ or $\Delta=\mathrm{span}\{(1,0),(0,e_{1})\}$ is Abelian.

\begin{itemize}
\item[1.] If $\dim \mathfrak{g}^{0}=3$ then $\mathfrak{g}=\mathfrak{g}^{0}$ and
since in this case the LARC is equivalent to the ad-rank condition Theorem
\ref{meuteorema} implies the controllability of $\Sigma(\mathcal{X}, \Delta)$;

\item[2.] If $\dim \mathfrak{g}_{0}=2$ we have that $\mathfrak{g}%
=\mathfrak{g}^{+}\Delta$ and consequently $G=G^{0}\cdot G_{\Delta}$. Since
$\Sigma(\mathcal{X}, \Delta)$ satisfies the ad-rank condition Theorem
\ref{meuteorema} implies $G^{0}\subset \mathcal{A}\cap \mathcal{A}^{*}$ and by
Lemma \ref{translation}
\[
G=G^{0}\cdot G_{\Delta}\subset G^{0}\cdot \mathcal{A}\subset \mathcal{A}%
\; \; \mbox{ and }\; \;G=G^{0}\cdot G_{\Delta}\subset G^{0}\cdot \mathcal{A}%
^{*}\subset \mathcal{A}^{*}%
\]
implying the controllability of $\Sigma(\mathcal{X}, \Delta)$.

\item[3.] If $\dim \mathfrak{g}^{0}=1$ then necessarily $\mathcal{D}^{\ast}$ is invertible.

\begin{itemize}
\item If $\Delta=\mathrm{span}\{(1,0),(0,e_{2})\}$ we can consider the induced
linear system on $H_{e_{2}}\setminus G\simeq \mathbb{R}^{2}$. Since such system
satisfy the ad-rank condition it is controllable implying that $G=H_{e_{2}%
}\cdot \mathcal{A}=H_{e_{2}}\cdot \mathcal{A}^{\ast}$. On the other hand, the
fact that $H_{e_{2}}$ is $\varphi$-invariant and $H_{e_{2}}\subset G_{\Delta
}\subset \mathcal{A}\cap \mathcal{A}^{\ast}$ implies by Lemma \ref{translation}
the controllability of $\Sigma(\mathcal{X},\Delta)$.

\item If $\Delta=\mathrm{span}\{(1,0),(0,e_{1})\}$, the induced system on the
two-dimensional solvable Lie group \linebreak$H_{e_{1}}\setminus
G\simeq \mathrm{Aff}_{0}(\mathbb{R})$ cannot be controllable. In fact, the
induced derivation admits a nonzero eigenvalue and consequently $\Sigma
(\mathcal{X},\Delta)$ cannot be controllable, concluding the proof.
\end{itemize}
\end{itemize}
\end{proof}

\subsubsection{The case $G=R_{3}, \;R_{3, \lambda}$, $R^{\prime}_{3, \lambda}%
$, $E_{n}$ or $\widetilde{E_{2}}$.}

Let us separate the cases as follows:

\textbf{$\bullet$\; \;$G=R_{3}$ or $G=R_{3, \lambda}$.}

\begin{proposition}
Let $\Sigma(\mathcal{X},\Delta)$ be a linear control system on $G=R_{3,\lambda
}$ or $R_{3}$. Then,

\begin{itemize}
\item[1.] If $G=R_{3, \lambda}$ the linear system is controllable if and only
if it satisfies the LARC and $\ker \mathcal{D}^{*}\not \subset \Delta$ or
$\mathcal{D}^{*}$ has a pair of complex eigenvalues;

\item[2.] If $G=R_{3}$ the linear control system is controllable if and only
if it satisfies the LARC and $\mathfrak{g}=\mathfrak{g}^{0}$.
\end{itemize}
\end{proposition}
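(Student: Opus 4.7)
By Proposition \ref{conjugation} I may assume $\Delta=\operatorname{span}\{(1,0),(0,w)\}$, and since $\Delta$ is a two-dimensional Lie subalgebra, the bracket $[(1,0),(0,w)]=(0,\theta w)$ must lie in $\Delta$, forcing $w$ to be a $\theta$-eigenvector; this means $w\in\mathbb{R}e_1$ in $R_3$, while in $R_{3,\lambda}$ one has $w\in\mathbb{R}e_1\cup\mathbb{R}e_2$ when $\lambda\neq 1$, and any $w\in\mathbb{R}^2\setminus\{0\}$ when $\lambda=1$. Because $\dim\Delta=2$ has codimension one in $\mathfrak{g}$, Remark \ref{importantremark} gives that LARC is equivalent to the ad-rank condition; hence $\mathcal{A}$ and $\mathcal{A}^*$ are open and Theorem \ref{meuteorema} is available. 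Whenever $\mathfrak{g}=\mathfrak{g}^0$ (which covers $R_3$ with $\mathcal{D}^*$ nilpotent and $R_{3,\lambda}$ with $\mathcal{D}^*\equiv 0$), Theorem \ref{meuteorema} delivers controllability immediately.

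For non-controllability ($R_3$ with $\alpha\neq 0$, and $R_{3,\lambda}$ with real eigenvalues and $\ker\mathcal{D}^*\subset\Delta$), I would project onto the $\theta$-eigendirection transverse to $w$. Because $\rho_t w$ stays on the $\theta$-eigenline of $w$, the $u_2$-control contributes nothing to this transverse coordinate; Proposition \ref{linear} then yields a scalar ODE
\[
\dot y=\beta y+(\Lambda_t\xi)_2,
\]
whose forcing term $(\Lambda_t\xi)_2$ equals $(e^{\lambda t}-1)\xi_2/\lambda$ in $R_{3,\lambda}$ and $(e^t-1)\xi_2$ in $R_3$; in both situations it is bounded on one side as $t$ ranges over $\mathbb{R}$. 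LARC forces $\xi_2\neq 0$, so a suitable half-plane $\{y>c\}$ or $\{y<c\}$ is forward- or backward-invariant under every admissible control, obstructing controllability.

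The remaining controllable cases with $\mathfrak{g}\neq\mathfrak{g}^0$ split into two. In the real-eigenvalue case on $R_{3,\lambda}$ with $\ker\mathcal{D}^*\not\subset\Delta$, a direct count gives that one of $\mathfrak{g}^{+,0},\mathfrak{g}^{-,0}$ equals $\mathfrak{g}$, so Theorem \ref{meuteorema} yields (say) $\mathcal{A}=G$ while only $G^0\subset\mathcal{A}^*$. One then verifies $\mathfrak{g}^0+\Delta=\mathfrak{g}$ and, by an explicit semidirect-product computation, the global set identity $G^0\cdot G_\Delta=G$; combining $G_\Delta\subset\operatorname{cl}(\mathcal{A}^*)$ with the $\varphi$-invariance of $G^0$ and $G^0\subset\mathcal{A}^*$ (so that Lemma \ref{translation} yields $G^0\cdot\mathcal{A}^*\subset\mathcal{A}^*$) gives $G=G^0\cdot G_\Delta\subset\operatorname{cl}(\mathcal{A}^*)$, and the LARC closure/interior equivalence upgrades this to $\mathcal{A}^*=G$. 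For the complex-eigenvalue case on $R_{3,1}$ with $\alpha\neq 0$, I would project onto $F\setminus G\simeq\mathbb{R}^2$; setting $u_1\equiv 0$ reduces the projected system to $\dot v=\mathcal{D}^*v+u_2\mathcal{D}^*w$, a linear control system on $\mathbb{R}^2$ whose Kalman matrix $[\mathcal{D}^*w,(\mathcal{D}^*)^2w]$ has determinant $\beta(\alpha^2+\beta^2)\neq 0$, hence is fully controllable; thus its reachable and controllable sets are all of $\mathbb{R}^2$. Combined with $F\subset\mathcal{A}\cap\mathcal{A}^*$ (from Theorem \ref{meuteorema}) and the fact that $\mathcal{A}$ and $\mathcal{A}^*$ are each saturated by left multiplication from $F$, this yields $\mathcal{A}=\mathcal{A}^*=G$. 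The main obstacle I anticipate is the set identity $G^0\cdot G_\Delta=G$ in the real-eigenvalue case, which is automatic at the Lie-algebra level ($\mathfrak{g}^0+\Delta=\mathfrak{g}$) but globally requires explicit multiplication in semidirect-product coordinates; a secondary bookkeeping issue is tracking the correct sign of the invariant half-plane across the sign configurations of $\alpha,\beta,\lambda,\xi_2$ in the non-controllable cases.
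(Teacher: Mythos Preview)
Your plan is essentially correct, but it diverges from the paper's argument in each of the three nontrivial subcases, so a brief comparison is in order.

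For the non-controllable cases the paper does not argue via your scalar ODE $\dot y=\beta y+(\Lambda_t\xi)_2$; instead, since $w$ is a $\theta$-eigenvector, $H_w$ is \emph{normal} and the quotient $G/H_w$ is the two-dimensional affine group $\mathrm{Aff}_0(\mathbb{R})$, so the paper simply invokes Theorem~\ref{DathJouan}: the induced derivation carries the eigenvalue of $\mathcal{D}^*$ on the direction transverse to $w$, which is nonzero exactly when $\ker\mathcal{D}^*\subset\Delta$. For $R_3$ with $\mathcal{D}^*$ invertible the paper projects onto $F\setminus G$ and finds the barrier line $y=\xi_2$, as in the one-dimensional analysis. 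Your half-plane argument is valid---it is effectively a bare-hands reproof of the relevant piece of Dath--Jouan---but the paper's route avoids the sign bookkeeping you flagged.

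For the controllable case $\ker\mathcal{D}^*\not\subset\Delta$ (real eigenvalues), the paper bypasses your $G^0\cdot G_\Delta=G$ computation entirely: since $\ker\mathcal{D}^*$ and $\mathbb{R}w$ are complementary $\theta$-eigenlines, one gets $\{0\}\times\mathbb{R}^2\subset\operatorname{cl}(\mathcal{A})\cap\operatorname{cl}(\mathcal{A}^*)$ directly (the $\ker\mathcal{D}^*$ piece lies in $G^0$, the $\mathbb{R}w$ piece in $G_\Delta$), and then the trivial global decomposition $G=(\{0\}\times\mathbb{R}^2)\cdot H$ finishes. This is shorter than computing $G^0$ and checking $G^0\cdot G_\Delta=G$, though your approach also works since $(0,e_2)\in\mathfrak{g}^0$ already gives $\{0\}\times\mathbb{R}e_2\subset G^0$.

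For the complex-eigenvalue case on $R_{3,1}$ the paper simply cites the one-dimensional result (drop the $(0,w)$ control and keep $(1,0)$). Your $F\setminus G$ plus Kalman argument is a legitimate alternative and has the mild advantage of not needing $\xi\neq 0$ to hold for the one-input subsystem's LARC.
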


\begin{proof}
We can as before assume that $\Delta=\mathrm{span}\{(1, 0), (0, w)\}$ where
$w\in \mathbb{R}^{2}$ is a common eigenvector of $\theta$ and $\mathcal{D}^{*}$.

1. If $\mathcal{D}^{*}$ has a pair of complex eigenvalues $G=R_{3, 1}$ and the
linear control system $\Sigma(\mathcal{X}, \Delta)$ is controllable if and
only if it satisfies the LARC by the one-dimensional case. Let us then assume
that $\mathcal{D}^{*}$ has a pair of real eigenvalues and analyze the
dimension of $\ker \mathcal{D}^{*}$.

$\bullet \; \dim \ker \mathcal{D}^{\ast}=0.$ In this case $\ker \mathcal{D}^{\ast
}=\{0\} \subset \Delta$. By Theorem \ref{DathJouan} the induced linear control
system on $H_{w}\setminus G\simeq \mathrm{Aff}_{0}(\mathbb{R}) $ cannot be
controllable, since the associated derivation has a nonzero eigenvalue.
Therefore, $\Sigma(\mathcal{X},\Delta)$ can not be controllable.

$\bullet \; \dim \ker \mathcal{D}^{\ast}=1.$ If $\ker \mathcal{D}^{\ast
}=\mathrm{span}\{w\} \subset \Delta$ we have as before, that the derivation of
the induced linear control system on $H_{w}\setminus G\simeq \mathrm{Aff}%
_{0}(\mathbb{R})$ has a nonzero eigenvalue and therefore cannot be
controllable implying that $\Sigma(\mathcal{X},\Delta)$ is not controllable.

On the other hand, if $\ker \mathcal{D}^{\ast}\not \subset \Delta$ it follows
that $\mathbb{R}^{2}=\ker \mathcal{D}^{\ast}\oplus \mathbb{R}w\subset
\mathcal{A}\cap \mathcal{A}^{\ast}$ and consequently
\[
G=\mathbb{R}^{2}\cdot H\subset \mathbb{R}^{2}\cdot \mathcal{A}\subset
\mathcal{A}\; \; \mbox{ and }\; \;G=\mathbb{R}^{2}\cdot H\subset \mathbb{R}%
^{2}\cdot \mathcal{A}^{\ast}\subset \mathcal{A}^{\ast}%
\]
implying the controllability of $\Sigma(\mathcal{X},\Delta)$.

$\bullet \; \dim \ker \mathcal{D}^{\ast}=2.$ In this case we have that
$\mathfrak{g}=\mathfrak{g}^{0}$ which by the Theorem \ref{meuteorema}, and the
fact that $\Sigma(\mathcal{X},\Delta)$ satisfies the ad-rank condition implies
the controllability.

2. Since $G=R_{3}$ we necessarily have $w=e_{1}$. Moreover, if $\dim
\ker \mathcal{D}^{\ast}\geq1$ then $\mathfrak{g}=\mathfrak{g}^{0}$ which by
Theorem \ref{meuteorema} implies the controllability of $\Sigma(\mathcal{X}%
,\Delta)$. Therefore, we only have to show that when $\mathcal{D}^{\ast}$ is
invertible, $\Sigma(\mathcal{X},\Delta)$ cannot be controllable. In order to
do that let us consider the induced system on $F\setminus G$ given by $\dot
{v}=\mathcal{D}^{\ast}v-u_{1}\theta(v-\xi)+u_{2}\mathcal{D}^{\ast}e_{1}$. In
coordinates we have that
\begin{equation}
\left \{
\begin{array}
[c]{l}%
\dot{x}=\alpha x+by-u_{1}\left(  (x-\xi_{1})+(y-\xi_{2})\right)  +\alpha
u_{2}\\
\dot{y}=\beta y-u(y-\xi_{2})
\end{array}
\right.  ,\label{9}%
\end{equation}
where $\xi=(\xi_{1},\xi_{2})$ and $\alpha,\beta \in \mathbb{R}^{\ast}$. As for
the one-dimensional case, such system is not controllable since the line
$y=\xi_{2}$ works as a barrier for its solutions (see Figure \ref{fig2}),
concluding the proof.
\end{proof}

\textbf{$\bullet$\; \;$G=E_{n}, \widetilde{E}$ or $G=R^{\prime}_{3, \lambda}$.}

When $G=E_{n}, \widetilde{E}$ or $G=R^{\prime}_{3, \lambda}$ the only
two-dimensional subalgebra of their associated Lie algebra is $\mathbb{R}^{2}$
as follows from Lemma \ref{subalgebra}. Therefore, any linear system
$\Sigma(\mathcal{X}, \Delta)$ with $\dim \Delta>1$ is trivially controllable if
it satisfies the LARC, since in this case we necessarily have that $\dim
\Delta=3 $.

\section*{Acknowledgements}

The first author was supported by Proyectos Fondecyt n$^{o}$ 1150292 and n$%
{{}^\circ}%
$ 1150292, Conicyt, Chile and the second one was supported by Fapesp grants
n$^{o}$ 2016/11135-2 and n$^{o}$ 2018/10696-6.


\end{document}